%Homology cylinders of higher-order

\documentclass[12pt]{amsart}

\usepackage{amssymb, amscd, txfonts}
\usepackage[all]{xy}
\usepackage{graphicx}

%%%%%%% Layout %%%%%%%%%%%%%%%%%%%%%%%%%%%%%%%%%%%%%%%%%%%%%%%%%%%%%%
\setlength{\textwidth}{16cm}
\setlength{\oddsidemargin}{0cm}
\setlength{\evensidemargin}{0cm}
\setlength{\topmargin}{0cm}
\setlength{\textheight}{22.5cm}

\numberwithin{equation}{section}

\sloppy

%%%%%%% Theoremstyle %%%%%%%%%%%%%%%%%%%%%%%%%%%%%%%%%%%%%%%%%%%%%%%%
\newtheorem{thm}{Theorem}[section]
\newtheorem{prop}[thm]{Proposition}
\newtheorem{lem}[thm]{Lemma}
\newtheorem{cor}[thm]{Corollary}

\theoremstyle{definition}
\newtheorem{defn}[thm]{Definition}

\theoremstyle{remark}
\newtheorem{rem}[thm]{Remark}

%%%%%%% Macro %%%%%%%%%%%%%%%%%%%%%%%%%%%%%%%%%%%%%%%%%%%%%%%%%%%%%%%

\renewcommand{\ker}{\operatorname{Ker}}

\newcommand{\Z}{\mathbb{Z}}
\newcommand{\Q}{\mathbb{Q}}

\newcommand{\K}{\mathbb{K}}

\DeclareMathOperator{\im}{Im}
\DeclareMathOperator{\Int}{Int}
\DeclareMathOperator{\out}{Out}
\DeclareMathOperator{\rank}{rank}

\DeclareMathOperator{\wh}{Wh}

\begin{document}

%%%%%%% Title %%%%%%%%%%%%%%%%%%%%%%%%%%%%%%%%%%%%%%%%%%%%%%%%%%%%%%%
\title[Homology cylinders of higher-order]{Homology cylinders of higher-order}
\author[T.~Kitayama]{Takahiro KITAYAMA}
\address{Research Institute for Mathematical Sciences, Kyoto University, Kyoto, 606--8502 Japan}
\email{kitayama@kurims.kyoto-u.ac.jp}
\subjclass[2010]{Primary~57M27, Secondary~57Q10}
\keywords{homology cylinder, homology cobordism, Reidemeister torsion,  derived series}
%\dedicatory{}

\begin{abstract}
We study algebraic structures of certain submonoids of the monoid of homology cylinders over a surface and the homology cobordism groups, using Reidemeister torsion with non-commutative coefficients.
The submonoids consist of ones whose natural inclusion maps from the boundary surfaces induce isomorphisms on higher solvable quotients of the fundamental groups.
We show that for a surface whose first Betti number is positive, the homology cobordism groups are other enlargements of the mapping class group of the surface than that of ordinary homology cylinders.  
Furthermore we show that for a surface with boundary whose first Betti number is positive, the submonoids consisting of irreducible ones as $3$-manifolds trivially acting on the solvable quotients of the surface group are not finitely generated.
\end{abstract}

\maketitle

%%%%%%% Section 1 %%%%%%%%%%%%%%%%%%%%%%%%%%%%%%%%%%%%%%%%%%%%%%%%%%%
\section{Introduction}

Let $\Sigma_{g, n}$ be a compact oriented surface of genus $g$ with (possibly empty) $n$ boundary components.
We denote by $\mathcal{M}_{g, n}$ the mapping class group of $\Sigma_{g, n}$ which is defined to be the group of isotopy classes of orientation preserving homeomorphisms of $\Sigma_{g, n},$ where these isotopies are understood to fix $\partial \Sigma_{g, n}$ pointwise.

Homology cylinders were first introduced by Goussarov~\cite{G} and Habiro~\cite{Hab}, where these were referred to as \textit{homology cobordisms}, in their works on so-called \textit{clover} or \textit{clasper} surgery of $3$-manifolds developed for the study of finite-type invariants.
The set $\mathcal{C}_{g, n}$ of isomorphism classes of homology cylinders over $\Sigma_{g, n}$ naturally has a monoid structure by ``stacking''.
We denote by $\overline{\mathcal{C}}_{g, n}$ the submonoid consisting of isomorphism classes of irreducible ones as $3$-manifolds.
In \cite{GL, Le} Garoufalidis and Levine introduced the group $\mathcal{H}_{g, n}$ of smooth homology cobordism classes of homology cylinders over $\Sigma_{g, n},$ which can be seen as an enlargement of $\mathcal{M}_{g, n}.$
(See also \cite[Proposition 2.\ 4]{CFK}.)
These sets naturally act on $H_1(\Sigma_{g, n}; \Z),$ and we can consider substitutes $\mathcal{IC}_{g, n}, \mathcal{I} \overline{\mathcal{C}}_{g, n}, \mathcal{IH}_{g, n}$ of the Torelli subgroup $\mathcal{I}_{g, n}$ which are defined as the kernels of the actions.

It is a natural question which properties of $\mathcal{M}_{g, n}$ are carried over to $\mathcal{C}_{g, n}, \mathcal{H}_{g, n}.$
The following results contrast with the well-known facts that $\mathcal{M}_{g, n}$ is finitely presented, that $\mathcal{M}_{g, n}$ is perfect for $g \geq 3$~\cite{Po} and that $\mathcal{I}_{g, 0}$ and $\mathcal{I}_{g, 1}$ are finitely generated for $g \geq 3$~\cite{J}.
Morita~\cite{Mo2} showed by using his ``trace maps'' defined in \cite{Mo1} that the abelianization of $\mathcal{IH}_{g, 1}$ has infinite rank.
Goda and Sakasai~\cite{GS2} showed by using sutured Floer homology theory that $\overline{\mathcal{C}}_{g, 1}$ is not finitely generated if $g \geq 1$.
Cha, Friedl and Kim~\cite{CFK} showed by using abelian Reidemeister torsion that the abelianization of $\mathcal{H}_{g, n}$ contains a direct summand isomorphic to $(\Z / 2)^{\infty}$ if $(g, n) \neq (0, 0), (0, 1)$ and one isomorphic to $\Z^{\infty}$ if $n > 1$, and that the abelianization of $\mathcal{IH}_{g, n}$ contains a direct summand isomorphic to $(\Z / 2)^{\infty}$ if $(g, n) \neq (0, 0), (0, 1)$ and one isomorphic to $\Z^{\infty}$ if $g > 1$ or $n > 1.$

We set $\Gamma_m := \pi_1 \Sigma_{g, n} / (\pi_1 \Sigma_{g, n})^{(m+1)}$ for each $m \geq 0,$ where $(\pi_1 \Sigma_{g, n})^{(m)}$ is the derived series of $\pi_1 \Sigma_{g, n}.$
The derived series $G^{(m)}$ of a group $G$ is defined inductively by $G^{(0)} := G$ and $G^{(m+1)} := [G^{(m)}, G^{(m)}].$
In this paper for given $m,$ we introduce \textit{homology cylinders of order $m$} over $\Sigma_{g, n}$, which are characterized as homology cylinders over $\Sigma_{g, n}$ satisfying that the marking embeddings from $\Sigma_{g, n}$ to the boundary of the underlying manifold $M$ induce isomorphisms $\Gamma_m \to \pi_1 M / (\pi_1 M)^{(m+1)}.$
We denote by $\mathcal{C}_{g, n}^{(m)}$ and $\overline{\mathcal{C}}_{g, n}^{(m)}$ the submonoids of  $\mathcal{C}_{g, n}$ and $\overline{\mathcal{C}}_{g, n}$ consisting of isomorphism classes of homology cylinders of order $m.$
These naturally give filtrations of $\mathcal{C}_{g, n}$ and $\overline{\mathcal{C}}_{g, n}.$
We also define an appropriate smooth homology cobordism group $\mathcal{H}_{g, n}^{(m)}$ in this context, which can be also seen as an enlargement of $\mathcal{M}_{g, n}.$
There are a natural homomorphism $\mathcal{C}_{g, n}^{(m)} \to \out(\Gamma_m)$ and the induced homomorphisms $\overline{\mathcal{C}}_{g, n}^{(m)} \to \out(\Gamma_m), \mathcal{H}_{g, n}^{(m)} \to \out(\Gamma_m).$
We use the notation $\mathcal{IC}_{g, n}^{(m)}, \mathcal{I} \overline{\mathcal{C}}_{g, n}^{(m)}, \mathcal{IH}_{g, n}^{(m)}$ for the kernels, which are substitutes of $\ker(\mathcal{M}_{g, n} \to \out(\Gamma_m)).$
The filtration
\[ \dots \subset \mathcal{IC}_{g, n}^{(m+1)} \subset \mathcal{IC}_{g, n}^{(m)} \subset \dots \subset \mathcal{IC}_{g, n}^{(1)} \subset \mathcal{IC}_{g, n} \]
and the sequence of the homomorphisms
\[ \dots \to \mathcal{IH}_{g, n}^{(m+1)} \to \mathcal{IH}_{g, n}^{(m)} \to \dots \to \mathcal{IH}_{g, n}^{(1)} \to \mathcal{IH}_{g, n} \]
can be seen as alternatives for the derived series of the Johnson filtrations of $\mathcal{C}_{g, n}, \mathcal{H}_{g, n}$~\cite{Hab, GL} for the lower central series.

Our purpose is to investigate algebraic structures of these objects by using Reidemeister torsion over skew fields as an analogue of the work of Cha, Friedl and Kim~\cite{CFK}.
Such torsion invariants are known by Friedl in \cite{Fri} to be essentially equal to higher-order Alexander polynomials introduced for knots by Cochran~\cite{C} and extended to $3$-manifolds by Harvey~\cite{Har} and Turaev~\cite{T3}.
We first construct the Reidemeister torsion homomorphisms
\begin{align*}
\mathcal{C}_{g, n}^{(m)} &\to (\Q(\Gamma_m)_{ab}^{\times} / \pm \Gamma_m) \rtimes \out(\Gamma_m), \\
\mathcal{H}_{g, n}^{(m)} &\to (\Q(\Gamma_m)_{ab}^{\times} / \pm \Gamma_m \cdot \langle q \bar{q} \rangle) \rtimes \out(\Gamma_m),
\end{align*}
where $\Q(\Gamma_m)$ is the classical right ring of quotients $\Q[\Gamma_m] (\Q[\Gamma_m] \setminus 0)^{-1}$ of $\Q[\Gamma_m]$ and $\bar{\cdot} \colon \Q(\Gamma_m)_{ab}^{\times} \to \Q(\Gamma_m)_{ab}^{\times}$ is the induced involution by $\gamma \mapsto \gamma^{-1}$ for $\gamma \in \Gamma_m.$ 
(See Corollaries \ref{cor_H1}, \ref{cor_H2}, \ref{cor_H3}, \ref{cor_H4}.)
Moreover, we prove the following theorems, which establish own interests of the objects.
(See also Lemmas \ref{lem_TC}, \ref{lem_TH}.)
\begin{thm}[Theorem \ref{thm_C}] \label{thm_I2}
(i) $\mathcal{I} \overline{\mathcal{C}}_{0, 2}^{(1)} \neq \mathcal{I} \overline{\mathcal{C}}_{0, 2}.$

(ii) $\mathcal{I} \overline{\mathcal{C}}_{1, 0}^{(1)} \neq \mathcal{I} \overline{\mathcal{C}}_{1, 0}.$

(iii) If $(g, n) \neq (0, 0), (0, 1), (0, 2), (1, 0),$ then $\mathcal{I} \overline{\mathcal{C}}_{g, n}^{(m+1)} \neq \mathcal{I} \overline{\mathcal{C}}_{g, n}^{(m)}$ for all $m.$
\end{thm}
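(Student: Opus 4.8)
The plan is to reduce all three parts to a nonvanishing statement for Reidemeister torsion and then to realize the required torsion values by an infection (genetic modification) construction. By the torsion homomorphism $\mathcal{C}_{g,n}^{(m)} \to (\Q(\Gamma_m)_{ab}^{\times}/\pm\Gamma_m) \rtimes \out(\Gamma_m)$ together with Lemma~\ref{lem_TC}, any element of $\mathcal{I}\overline{\mathcal{C}}_{g,n}^{(m+1)}$ is in particular of order $m$ with trivial action on $\out(\Gamma_m)$ and has trivial order-$m$ torsion in $\Q(\Gamma_m)_{ab}^{\times}/\pm\Gamma_m$. Hence in each case it suffices to exhibit an \emph{irreducible} homology cylinder $M$ over $\Sigma_{g,n}$ that is of order $m$, acts trivially on $\out(\Gamma_m)$, yet has \emph{nontrivial} order-$m$ torsion; such an $M$ lies in $\mathcal{I}\overline{\mathcal{C}}_{g,n}^{(m)}$ but not in $\mathcal{I}\overline{\mathcal{C}}_{g,n}^{(m+1)}$. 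Parts (i) and (ii) are precisely the cases $m=0$ with $\Gamma_0 \cong \Z$ and $\Gamma_0 \cong \Z^2$, since the derived series of $\pi_1\Sigma_{0,2}$ and of $\pi_1\Sigma_{1,0}$ terminates after one step.

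For the construction I would fix a knot $K \subset S^3$ with $\Delta_K \neq 1$ and a simple closed curve $\eta$ in the interior of $\Sigma_{g,n} \times I$ whose class lies in $(\pi_1\Sigma_{g,n})^{(m)} \setminus (\pi_1\Sigma_{g,n})^{(m+1)}$, and form $M$ by removing a tubular neighborhood of $\eta$ and gluing in the exterior $E_K$ so that the meridian and $0$-framed longitude of $K$ are identified with those of $\eta$. For $(g,n)$ outside the exceptional list the layer $(\pi_1\Sigma_{g,n})^{(m)}/(\pi_1\Sigma_{g,n})^{(m+1)}$ is a nonzero torsion-free abelian group for every $m$, so $\eta$ exists; for the annulus and the torus only $m=0$ is available, and one takes $\eta$ essential (for the annulus this recovers a knot exterior equipped with two annular markings). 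Because $E_K$ is a homology $S^1 \times D^2$, the collapse map $M \to \Sigma_{g,n} \times I$ is a homology equivalence rel boundary, so $M$ is a homology cylinder whose markings, inherited from the product, act trivially at the $\Gamma_m$ level. Since $\eta$ is essential, $\partial E_K$ is incompressible and both pieces are irreducible, so $M$ is irreducible. Finally a derived-series argument of Stallings--Dwyer or Cochran--Harvey type shows that the collapse map induces an isomorphism on $\Gamma_m = \pi_1/(\pi_1)^{(m+1)}$, so that $M$ is of order $m$.

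The crux is the order-$m$ torsion computation. Using multiplicativity of Reidemeister torsion under gluing along $\partial E_K$, one shows that $\tau_m(M)$ differs from that of the trivial cylinder by the image of the Alexander polynomial $\Delta_K$ under the coefficient map $\Q[\mu_K^{\pm 1}] \to \Q(\Gamma_m)$ sending the meridian to the class $\gamma \in \Gamma_m$ of $\eta$. As $\Gamma_m$ is torsion free (being a derived quotient of a surface group, hence poly-torsion-free-abelian) and $\eta \notin (\pi_1\Sigma_{g,n})^{(m+1)}$, the element $\gamma$ is of infinite order, whence $\Delta_K(\gamma)$ is a non-unit of $\Q[\Gamma_m]$ whose class in $\Q(\Gamma_m)_{ab}^{\times}/\pm\Gamma_m$ is nontrivial. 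This gives $\tau_m(M) \neq 1$ and completes the argument. I expect the main obstacle to lie precisely here: establishing the infection formula for torsion over the skew field $\Q(\Gamma_m)$ and verifying that $\Delta_K(\gamma)$ neither becomes a unit nor is absorbed into $\pm\Gamma_m$ after passing to $\Q(\Gamma_m)_{ab}^{\times}$. The topological verifications that $M$ is irreducible and of order $m$ are comparatively routine once $\eta$ is chosen at the correct depth of the derived series.
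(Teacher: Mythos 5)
Your construction is essentially the paper's own infection construction $M(\gamma,K)$ from Section \ref{sec_4}, and your argument is complete for parts (i) and (ii) and for the case $m=0$ of part (iii): there $\Gamma_0$ is free abelian, so $\wh(\Gamma_0)=1$ by \cite{BHS}, and both of your claims (order $m+1$ forces trivial $\tau_0$; $[\Delta_K(\gamma)]\neq 1$ in $\Q(\Gamma_0)^{\times}_{ab}/\pm\Gamma_0$) hold. But for $m>0$, which is the substance of part (iii), your reduction to torsion has two genuine gaps. First, the assertion that every element of $\mathcal{I}\overline{\mathcal{C}}_{g,n}^{(m+1)}$ has trivial order-$m$ torsion is unproven: what one actually gets (Proposition \ref{prop_V}) is that for a cylinder of order $m+1$ the complex $C_*(\widetilde{M},\widetilde{i_+(\Sigma)})\otimes_{\Z[\pi_1 M]}\Z[\Gamma_m]$ is acyclic, so $\tau_m$ lies in the image of $\wh(\Gamma_m)\to\Q(\Gamma_m)^{\times}_{ab}/\pm\Gamma_m$; whether $\wh(\Gamma_m)=1$ for $m>0$ is precisely the open question flagged in the remark following Proposition \ref{prop_V}, so you cannot conclude $\tau_m=1$ there. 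Second, your claim that $\Delta_K(\gamma)$, being a non-unit of $\Q[\Gamma_m]$, has nontrivial class in $\Q(\Gamma_m)^{\times}_{ab}/\pm\Gamma_m$ does not follow: the class must survive the abelianization of the unit group of the skew field $\Q(\Gamma_m)$ and the quotient by $\pm\Gamma_m$, and detecting this is exactly what Section \ref{sec_5} is built for (bi-orderability of free solvable quotients, the homomorphism $d$, unique factorization in $\Z[A_m]$); moreover that machinery requires $n>0$, whereas part (iii) includes closed surfaces, for which bi-orderability of $\Gamma_m$ is itself open. (A minor slip as well: in the infection the meridian of $K$ must be glued to the longitude of $\eta$ and the longitude of $K$ to the meridian of $\eta$, not meridian to meridian, or the result is not a homology cylinder.)

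The paper's proof of Theorem \ref{thm_C} sidesteps both issues by distinguishing the submonoids through the $\Z[\Gamma_m]$-homology module itself rather than through its torsion. By Lemma \ref{lem_N}, every cylinder of order $m+1$ satisfies $H_1^{\rho_m}(M,i_+(\Sigma);\Z[\Gamma_m])=0$, while by Lemma \ref{lem_A} the infected cylinder satisfies $H_1^{\rho_m}(M(\gamma,K),i_+(\Sigma);\Z[\Gamma_m])\cong\mathcal{A}_K\otimes_{\Z[t,t^{-1}]}\Z[\Gamma_m]\neq 0$ whenever $\mathcal{A}_K\neq 0$; the nonvanishing is elementary because $\gamma$ has infinite order in the torsion-free group $\Gamma_m$, so $\Z[\Gamma_m]$ is a free $\Z[t,t^{-1}]$-module. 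To repair your write-up, keep your verification that $M(\gamma,K)$ lies in $\mathcal{I}\overline{\mathcal{C}}_{g,n}^{(m)}$ (this is Proposition \ref{prop_C1}), but replace the torsion criterion for non-membership in $\mathcal{I}\overline{\mathcal{C}}_{g,n}^{(m+1)}$ by this homological one; the torsion route as you propose it can only be justified when $m=0$.
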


\begin{thm}[Theorem \ref{thm_H}] \label{thm_I3}
If $(g, n) \neq (0, 0), (0, 1),$ then the homomorphisms $\mathcal{H}_{g, n}^{(m)} \to \mathcal{H}_{g, n}, \mathcal{IH}_{g, n}^{(m)} \to \mathcal{IH}_{g, n}$ are not surjective for $m > 0.$
\end{thm}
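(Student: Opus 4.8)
The plan is to detect the failure of surjectivity with the abelian, i.e.\ order $0$, Reidemeister torsion homomorphism---the case $m=0$ of the homomorphisms of Corollaries~\ref{cor_H1}--\ref{cor_H4}. Put $\Gamma_0 = H_1(\Sigma_{g,n};\Z)$ and let
\[ \tau_0\colon \mathcal{H}_{g,n}\longrightarrow \bigl(\Q(\Gamma_0)_{ab}^{\times}/\pm\Gamma_0\cdot\langle q\bar q\rangle\bigr)\rtimes\out(\Gamma_0) \]
denote this homomorphism. On $\mathcal{IH}_{g,n}=\ker(\mathcal{H}_{g,n}\to\out(\Gamma_0))$ the $\out(\Gamma_0)$-coordinate vanishes, so there $\tau_0$ is an honest homomorphism into the abelian group $A_0:=\Q(\Gamma_0)_{ab}^{\times}/\pm\Gamma_0\cdot\langle q\bar q\rangle$. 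Since $(g,n)\neq(0,0),(0,1)$ forces $b_1(\Sigma_{g,n})>0$, the group $\Gamma_0$ is free abelian of positive rank, and by Cha--Friedl--Kim~\cite{CFK} (see also the case $m=0$ of Lemma~\ref{lem_TH}) the image $\tau_0(\mathcal{IH}_{g,n})\subset A_0$ is infinitely generated; fix in particular an element $x\in\mathcal{IH}_{g,n}$ with $\tau_0(x)\neq 1$. I will show that this single $x$ lies outside the images of both $\mathcal{H}_{g,n}^{(m)}\to\mathcal{H}_{g,n}$ and $\mathcal{IH}_{g,n}^{(m)}\to\mathcal{IH}_{g,n}$ for every $m>0$.

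The core is the following vanishing statement: \emph{the torsion coordinate of $\tau_0$ is trivial on any homology cylinder of order $\geq 1$.} Let $M$ represent a class in $\mathcal{H}_{g,n}^{(m)}$ with $m\geq 1$. As the order condition only weakens under passage to smaller derived quotients, $M$ is in particular of order $1$, so the markings induce an isomorphism $\Gamma_1\cong\pi_1 M/(\pi_1 M)^{(2)}$. Comparing the extensions $1\to G^{(1)}/G^{(2)}\to G/G^{(2)}\to H_1\to 1$ for $G=\pi_1\Sigma_{g,n}$ and $G=\pi_1 M$, this isomorphism restricts to an isomorphism of the rational Alexander modules $(\pi_1\Sigma_{g,n})^{(1)}/(\pi_1\Sigma_{g,n})^{(2)}\otimes\Q\cong H_1(\Sigma_{g,n};\Q[\Gamma_0])$ and $H_1(M;\Q[\Gamma_0])$ as $\Q[\Gamma_0]$-modules, compatibly with the evident isomorphism $H_0(\Sigma_{g,n};\Q[\Gamma_0])=\Q=H_0(M;\Q[\Gamma_0])$. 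Feeding the vanishing of relative homology in degrees $0$ and $1$ into Poincar\'e--Lefschetz duality for the homology cobordism $M$ then yields $H_\ast(M,\Sigma_{g,n};\Q[\Gamma_0])=0$ in all degrees; hence the relative chain complex is acyclic over $\Q[\Gamma_0]$ and $\tau_0(M)$ equals the torsion of the trivial cylinder, namely $1$ in $A_0$.

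Granting the lemma, the theorem follows at once. If $x$ belonged to the image of $\mathcal{H}_{g,n}^{(m)}\to\mathcal{H}_{g,n}$, or of $\mathcal{IH}_{g,n}^{(m)}\to\mathcal{IH}_{g,n}$, for some $m>0$, then $x$ would be represented by a homology cylinder of order $m\geq 1$; the lemma would force the torsion coordinate of $\tau_0(x)$ to be trivial, while $x\in\mathcal{IH}_{g,n}$ already kills the $\out(\Gamma_0)$-coordinate, so $\tau_0(x)=1$, contrary to the choice of $x$. Hence neither homomorphism is surjective. I expect the only genuine obstacle to be the vanishing lemma, and within it the passage to degrees $2$ and $3$: the metabelian hypothesis controls the $\Q[\Gamma_0]$-homology directly only in degrees $0$ and $1$, so one must use the duality intrinsic to a homology cobordism---together with the fact that these homology groups are torsion modules over the Laurent ring $\Q[\Gamma_0]$---to exclude a surviving higher-degree contribution to the torsion.
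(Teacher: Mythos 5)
Your overall architecture is the same as the paper's proof of Theorem \ref{thm_H}: compose with the order-zero torsion homomorphism, prove that its torsion coordinate vanishes on anything of order $\geq 1$, and quote Cha--Friedl--Kim \cite{CFK} for classes with nontrivial $\tau_0$. The gap is in the last step of your vanishing lemma: ``the relative chain complex is acyclic over $\Q[\Gamma_0]$, hence $\tau_0(M)=1$'' does not follow. Since $M$ is obtained from $\Sigma\times[0,1]$ by attaching equal numbers of $1$- and $2$-handles, $C_*(\widetilde{M},\widetilde{i_+(\Sigma)})$ is equivalent to a two-term complex $0\to C_2\xrightarrow{\partial}C_1\to 0$ of free $\Z[\Gamma_0]$-modules of equal rank, and $\tau_0(M)=[\det\partial]$. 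Acyclicity after tensoring with $\Q$ says only that $\det\partial$ is a unit of $\Q[\Gamma_0]$; as $\det\partial\in\Z[\Gamma_0]$, this means $\det\partial=c\gamma$ with $c$ a nonzero \emph{integer} and $\gamma\in\Gamma_0$. So what your argument shows is that the torsion coordinate is the class of an integer constant, and such a class need not die in $\Q(\Gamma_0)_{ab}^{\times}/N_0$: if $c=\pm\gamma\, q\bar q$, then specializing along a surjection $\Gamma_0\to\Z$, clearing denominators, and comparing orders of vanishing and values at $t=1$ forces $|c|$ to be a square of a rational, so for instance $[2]\neq 1$. Classes of symmetric irreducibles $p$ with $[p]=[\bar p]$ (a prime integer is one) are exactly the sort of order-two elements that survive in this quotient, so this is not a removable technicality: with only $\Q[\Gamma_0]$-coefficients your lemma is consistent with $\tau_0(M)$ being precisely the kind of element used to detect non-surjectivity.

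The lemma itself is true, and the repair is to run your argument integrally, which your hypothesis already permits; this is what the paper does. The order-$1$ condition gives an isomorphism $\pi^{(1)}/\pi^{(2)}\to(\pi_1M)^{(1)}/(\pi_1M)^{(2)}$ of \emph{integral} Alexander modules, i.e.\ $H_1^{\rho_0}(i_+(\Sigma);\Z[\Gamma_0])\to H_1^{\rho_0}(M;\Z[\Gamma_0])$ is an isomorphism, whence $H_*^{\rho_0}(M,i_+(\Sigma);\Z[\Gamma_0])=0$ (Lemma \ref{lem_N}). Then $\partial$ is a surjection between free $\Z[\Gamma_0]$-modules of equal rank, hence an isomorphism over $\Z[\Gamma_0]$, and since the units of $\Z[\Gamma_0]\cong\Z[t_1^{\pm1},\dots,t_r^{\pm1}]$ are exactly $\pm\Gamma_0$ --- equivalently $\wh(\Gamma_0)=1$ by Bass--Heller--Swan \cite{BHS} --- one gets $\tau_0(M)=1$ on the nose; this is Proposition \ref{prop_V}, which is what the paper's proof of Theorem \ref{thm_H} invokes. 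Note also that once the two-term complex is in place, your appeal to Poincar\'e--Lefschetz duality and the discussion of degrees $2$ and $3$ are unnecessary: vanishing of $H_1$ alone already makes $\partial$ invertible.
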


Finally, we prove the following theorem, and give an observation on an approach for whether $\mathcal{IH}_{g, n}^{(m)}$ is in general finitely generated or not.
\begin{thm}[Corollary \ref{cor_A}] \label{thm_I4}
If $n > 0$ and $(g, n) \neq (0, 1), (0, 2),$ then $\mathcal{I} \overline{\mathcal{C}}_{g, n}^{(m)}$ is not finitely generated for all $m.$
\end{thm}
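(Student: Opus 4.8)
The plan is to show that the Reidemeister torsion homomorphism already detects an infinitely generated subgroup. Restricting the homomorphism $\mathcal{C}_{g, n}^{(m)} \to (\Q(\Gamma_m)_{ab}^{\times} / \pm \Gamma_m) \rtimes \out(\Gamma_m)$ of the introduction to the submonoid $\mathcal{I} \overline{\mathcal{C}}_{g, n}^{(m)}$, on which the $\out(\Gamma_m)$-coordinate is trivial by definition, yields a monoid homomorphism $\tau \colon \mathcal{I} \overline{\mathcal{C}}_{g, n}^{(m)} \to \Q(\Gamma_m)_{ab}^{\times} / \pm \Gamma_m$ into an abelian group. A finitely generated monoid has finitely generated image under any homomorphism to a group, and a finitely generated submonoid of an abelian group generates a finitely generated subgroup; hence it suffices to exhibit elements of $\mathcal{I} \overline{\mathcal{C}}_{g, n}^{(m)}$ whose torsions generate an infinitely generated subgroup of $\Q(\Gamma_m)_{ab}^{\times} / \pm \Gamma_m$.

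To produce such elements I would use the infection (satellite) construction on the trivial cylinder. Since $n > 0$ and $(g, n) \neq (0, 1), (0, 2)$, the group $\pi_1 \Sigma_{g, n}$ is free of rank $2g + n - 1 \geq 2$, so its derived series does not terminate and $(\pi_1 \Sigma_{g, n})^{(m)} \setminus (\pi_1 \Sigma_{g, n})^{(m+1)} \neq \emptyset$ for every $m$; this is precisely what fails for the excluded surfaces, whose fundamental groups have rank $\leq 1$. I would fix an embedded circle $\gamma$ in the interior of $\Sigma_{g, n} \times I$ representing a class in $(\pi_1 \Sigma_{g, n})^{(m)} \setminus (\pi_1 \Sigma_{g, n})^{(m+1)}$, and for a knot $K \subset S^3$ let $N_K$ be obtained from $\Sigma_{g, n} \times I$ by removing a tubular neighborhood of $\gamma$ and gluing in the exterior of $K$ by the meridian-longitude interchange. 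Because the knot exterior is a homology $S^1 \times D^2$, the manifold $N_K$ is again a homology cylinder; because $\gamma \in (\pi_1 \Sigma_{g, n})^{(m)}$, infection leaves $\pi_1 / (\pi_1)^{(m+1)}$ unchanged by the standard behavior of the derived series under infection (cf.\ \cite{Har}), so $N_K$ is of order $m$ and acts trivially on $\Gamma_m$, i.e.\ $N_K \in \mathcal{I} \overline{\mathcal{C}}_{g, n}^{(m)}$; and for $K$ nontrivial the two pieces are irreducible with incompressible common boundary torus, whence $N_K$ is irreducible. I would record these three verifications as a lemma.

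Next I would compute $\tau(N_K)$. By the gluing (Mayer--Vietoris) formula for Reidemeister torsion over the skew field $\Q(\Gamma_m)$ applied to this decomposition, and by the identification of such torsions with higher-order Alexander polynomials \cite{C, Har, Fri}, the torsion of $N_K$ differs from the trivial torsion $\tau(\Sigma_{g, n} \times I)$ only through the torsion of the knot exterior pulled back along the coefficient map sending the meridian class to $\bar\gamma \in \Gamma_m$. Since $\bar\gamma$ has infinite order and commutes with $\Q$, it generates a commutative Laurent polynomial subring $\Q[\bar\gamma^{\pm 1}] \cong \Q[t^{\pm 1}]$ of the domain $\Q[\Gamma_m]$, inside which $\Delta_K(\bar\gamma) \neq 0$; Milnor's formula for the knot exterior, together with the cancellation of the $(\bar\gamma - 1)$-factor against that of the removed solid torus, gives $\tau(N_K) = [\Delta_K(\bar\gamma)]$ in $\Q(\Gamma_m)_{ab}^{\times} / \pm \Gamma_m$. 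Choosing knots $K_1, K_2, \dots$ whose Alexander polynomials are pairwise coprime non-units in $\Q[t^{\pm 1}]$ (for instance suitable twist knots), the classes $[\Delta_{K_j}(\bar\gamma)]$ should be multiplicatively independent, yielding the desired infinitely generated subgroup; for $m = 0$ this is the multivariable abelian computation of \cite{CFK}.

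For $m \geq 1$ the main obstacle is exactly this independence. Every homomorphism from $\Gamma_m$ to an abelian group kills $\bar\gamma$, since $\bar\gamma \in \Gamma_m^{(m)} \subseteq [\Gamma_m, \Gamma_m]$; in particular Harvey's higher-order degree and all abelian-quotient invariants vanish on $[\Delta_K(\bar\gamma)]$, so the independence cannot be read off from any degree invariant and must instead be extracted from the non-commutative structure of the abelianized multiplicative group $\Q(\Gamma_m)_{ab}^{\times}$ itself. Concretely, I would study the composite $\Q(\bar\gamma)^{\times} \hookrightarrow \Q(\Gamma_m)^{\times} \to \Q(\Gamma_m)_{ab}^{\times} / \pm \Gamma_m$ and show that its kernel meets the commutative subfield $\Q(\bar\gamma)$ only in $\pm \Q^{\times} \langle \bar\gamma \rangle$; granting this, unique factorization in $\Q[\bar\gamma^{\pm 1}]$ forces the $[\Delta_{K_j}(\bar\gamma)]$ to be independent. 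Controlling $\Q(\bar\gamma)^{\times} \cap [\Q(\Gamma_m)^{\times}, \Q(\Gamma_m)^{\times}]$ is the delicate point; I expect to handle it using the description of $\Q(\Gamma_m)$ as an Ore localization of a crossed product of $\Q(\Gamma_{m-1})$ with the abelian module $\Gamma_m^{(m)}$, together with the fact that conjugation by $\Gamma_m$ preserves this commutative subring up to the module action, reducing the question to an explicit computation over $\Q[t^{\pm 1}]$.
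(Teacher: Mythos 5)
Up to the key independence claim, your outline coincides with the paper's own argument: your $N_K$ is the paper's $M(\gamma, K)$, your three verifications are Proposition \ref{prop_C1}, your torsion computation $\tau(N_K) = [\Delta_K(\gamma)]$ is Proposition \ref{prop_C2}, and your reduction of non-finite generation to exhibiting an infinitely generated subgroup in the image of $\tau_m$ is sound (the paper phrases it via the group completion $\mathcal{U}(\mathcal{I} \overline{\mathcal{C}}_{g, n}^{(m)})$ in Theorem \ref{thm_A}). But your last paragraph, which you yourself flag as ``the delicate point,'' is a plan rather than a proof, and it is precisely the heart of the matter. Two concrete problems. First, your target statement is not quite the right one: for any $\sigma \in \Gamma_m$ and any nonzero $p \in \Q[A_m]$, where $A_m := \pi^{(m)} / \pi^{(m+1)}$, the element $\sigma p \sigma^{-1} p^{-1}$ is a commutator of units of $\Q(\Gamma_m)$, so $[\Delta_K(\gamma)] = [\Delta_K(\sigma \gamma \sigma^{-1})]$ in $\Q(\Gamma_m)_{ab}^{\times} / \pm \Gamma_m$; any invariant certifying independence must therefore be defined modulo the conjugation action of $\Gamma_m$ on $A_m$, and a computation confined to the single commutative subfield $\Q(\langle \gamma \rangle)$ ``over $\Q[t^{\pm 1}]$'' cannot see this identification. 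Second, and more fundamentally, to prove that $\Delta_{K_1}(\gamma) \Delta_{K_2}(\gamma)^{-1}$ is not $\pm \delta$ times a product of commutators in the division ring $\Q(\Gamma_m)$, one must construct a homomorphism out of $\Q(\Gamma_m)_{ab}^{\times} / \pm \Gamma_m$ that distinguishes the two classes; asserting that the commutator subgroup meets $\Q(\langle \gamma \rangle)^{\times}$ only in $\pm \Q^{\times} \langle \gamma \rangle$ is a restatement of what is to be proved, and the crossed-product reduction you ``expect'' to carry out is exactly the missing content.

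What the paper actually does in Section \ref{sec_5} is build that homomorphism. Since $n > 0$, $\pi$ is free, so $\Gamma_{m-1} = \pi / \pi^{(m)}$ is bi-orderable; fixing a bi-order, the leading-term map $d \colon \Z[\Gamma_m] \setminus 0 \to \Q(A_m)^{\times} / C_m$ (take the portion of an element lying over the maximal $\delta \in \Gamma_{m-1}$ in its support, translated back into $\Q(A_m)$) is multiplicative, where $C_m$ is generated by the elements $\pm a \cdot \sigma p \sigma^{-1} p^{-1}$ precisely so as to absorb the conjugation relations above; $d$ then descends to a group homomorphism on $\Q(\Gamma_m)_{ab}^{\times} / \pm \Gamma_m$, and unique factorization in $\Z[A_m]$ yields an isomorphism $e \colon \Q(A_m)^{\times} / C_m \to \oplus_{[p]} \Z$. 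Since $\Delta_K(\gamma)$ is supported in $A_m$, one gets $d(\tau_m(M(\gamma, K))) = [\Delta_K(\gamma)]$, and realizing infinitely many inequivalent irreducible symmetric polynomials with $p(1) = 1$ as Alexander polynomials finishes the argument. Until you either reproduce this homomorphism or supply some other explicit homomorphism out of the abelianized unit group that is insensitive to conjugation, your proof has a genuine gap at its central step.
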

In fact we show that the group completion of $\mathcal{I} \overline{\mathcal{C}}_{g, n}^{(m)}$ has an abelian group quotient of infinite rank.
These can be regarded as an analogue of the question whether $\ker(\mathcal{M}_{g, n} \to \out(\Gamma_m))$ is finitely generated or not.
It is worth pointing out that the technique in Section \ref{sec_5} to detect non-triviality of elements in $\Q(\Gamma_m)_{ab}^{\times} / \pm \Gamma_m$ has multiplicity of use and could be useful  also in other applications of non-commutative Reidemeister torsion. 

This paper is organized as follows.
In Section \ref{sec_2} we define homology cylinders of order $m$ and smooth homology cobordisms of them.
Section \ref{sec_3} establishes the Reidemeister torsion homomorphisms of $\mathcal{C}_{g, n}^{(m)}, \mathcal{H}_{g, n}^{(m)}$ and contains a proof of Theorem \ref{thm_I3}. 
Section \ref{sec_4} provides a way to construct homology cylinders of order $m$ from knots in $S^3$ by performing surgery and computations of Reidemeister torsion of them.
Here we prove Theorem \ref{thm_I2}.
Finally, we prove Theorem \ref{thm_I4} and discuss an approach for $\mathcal{IH}_{g, n}^{(m)}$ in Section \ref{sec_5}.

In this paper all homology groups and cohomology groups are with respect to integral coefficients unless specifically noted. \\

%%%%%%% Section 2 %%%%%%%%%%%%%%%%%%%%%%%%%%%%%%%%%%%%%%%%%%%%%%%%%%%
\section{Definitions} \label{sec_2}
\subsection{The monoids of homology cylinders of higher-order}
We begin with introducing the monoids of homology cylinders of higher-order, which give a filtration of the monoid of ordinary homology cylinders introduced in \cite{G}, \cite{Hab}.
See \cite{HM}, \cite{S3} for more details on homology cylinders. 

To simplify notation we often write $\Sigma, \pi$ instead of $\Sigma_{g, n}, \pi_1 \Sigma_{g, n},$ respectively.
%We set $\Gamma_m := \pi / \pi^{(m+1)}$ for each $m \geq 0.$

\begin{defn}
For an integer $m \geq 0,$ a \textit{homology cylinder} $(M, i_{\pm})$ of order $m$ over $\Sigma$ is defined to be a compact oriented $3$-manifold $M$ together with embeddings $i_+, i_- \colon \Sigma \to \partial M$ satisfying the following:

(i) $i_+$ is orientation preserving and $i_-$ is orientation reversing,

(ii) $\partial M = i_+(\Sigma) \cup i_-(\Sigma)$ and $i_+(\Sigma) \cap i_-(\Sigma) = i_+(\partial M) = i_-(\partial M),$

(iii) $i_+|_{\partial \Sigma} = i_-|_{\partial \Sigma},$

(iv) $(i_+)_*, (i_-)_* \colon \Gamma_m \to \pi_1 M / (\pi_1 M)^{(m+1)}$ are isomorphisms. \\
Two homology cylinders $(M, i_{\pm}), (N, j_{\pm})$ are called isomorphic if there exists an orientation preserving homeomorphism $f \colon M \to N$ satisfying $j_{\pm} = f \circ i_{\pm}.$
We denote by $\mathcal{C}_{g,n}^{(m)}$ the set of all isomorphism classes of homology cylinders of order $m$ over $\Sigma_{g, n}.$
\end{defn}

\begin{rem} \label{rem_H}
By the Hurewicz theorem and a standard argument on homology groups it follows from the condition (iv) that $(i_+)_*, (i_-)_* \colon H_*(\Sigma) \to H_*(M)$ are isomorphism.
In particular, a homology cylinder of order $0$ is nothing but an ordinary homology cylinder, i.e., $\mathcal{C}_{g,n}^{(0)} = \mathcal{C}_{g,n}.$
\end{rem}

For $\psi \in \mathcal{M}_{g, n},$ we define $M(\psi)$ to be the homology cylinder $\Sigma \times [0, 1] / \sim$ (of order $m$ for all $m$) equipped with $(i_+ = id \times 1, i_- = \psi \times 0),$ where $(x, s) \sim (x, t)$ for $x \in \partial \Sigma$ and $s, t \in [0, 1].$
A product operation on $\mathcal{C}_{g, n}^{(m)}$ is given by stacking:
\[ (M, i_{\pm}) \cdot (N, j_{\pm}) := (M \cup_{i_- \circ (j_+)^{-1}} N, i_+, j_-), \]
which turns $\mathcal{C}_{g, n}^{(m)}$ into a monoid from the following lemma. 
The unit is given by $M(id).$
For all $m,$ $\mathcal{C}_{g, n}^{(m+1)}$ is a submonoid of $\mathcal{C}_{g, n}^{(m)}.$
The correspondence $\psi \mapsto M(\psi)$ gives a monoid homomorphism $\mathcal{M}_{g, n} \to \mathcal{C}_{g, n}^{(m)}$ for each $m.$

\begin{lem} \label{lem_P}
For $(M, i_{\pm}), (N, j_{\pm}) \in \mathcal{C}_{g, n}^{(m)},$ $(M, i_{\pm}) \cdot (N, j_{\pm}) \in \mathcal{C}_{g, n}^{(m)}.$
\end{lem}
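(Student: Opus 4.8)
Conditions (i)--(iii) are preserved under the gluing by routine checks: the orientations of $M$ and $N$ match along the identification, so that $W$ is again a compact oriented $3$-manifold, and the boundary identifications along $\partial\Sigma$ are consistent because $i_+|_{\partial\Sigma}=i_-|_{\partial\Sigma}$ and $j_+|_{\partial\Sigma}=j_-|_{\partial\Sigma}$. The real content is condition (iv). Write $W:=M\cup_{i_-\circ(j_+)^{-1}}N$ with markings $i_+$ and $j_-$, and set $G:=\pi_1 M$, $H:=\pi_1 N$, $S:=\pi_1\Sigma$ and $P:=\pi_1 W$. Since $\Sigma$ is connected, the van Kampen theorem identifies $P$ with the pushout $G\ast_S H$ of the maps $a:=(i_-)_*\colon S\to G$ and $b:=(j_+)_*\colon S\to H$; let $\iota_G\colon G\to P$ and $\iota_H\colon H\to P$ denote the canonical maps. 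For a group $K$ abbreviate $K_{[m]}:=K/K^{(m+1)}$, and for a homomorphism $f$ write $f_{[m]}$ for the induced map on these quotients. The plan is to show that $(\iota_G)_{[m]}$ and $(\iota_H)_{[m]}$ are isomorphisms. Granting this, condition (iv) follows at once: by condition (iv) for $(M,i_\pm)$ the map $(i_+)_*\colon\Gamma_m\to G_{[m]}$ is an isomorphism, so composing with $(\iota_G)_{[m]}$ shows $(i_+)_*\colon\Gamma_m\to P_{[m]}$ is an isomorphism, and the marking $j_-$ is handled symmetrically using $(N,j_\pm)$ and $\iota_H$.

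The heart of the matter is thus the following purely algebraic claim: if $a\colon S\to G$ and $b\colon S\to H$ both induce isomorphisms $a_{[m]},b_{[m]}$ on the $m$-th solvable quotients, then the canonical maps into the pushout $P=G\ast_S H$ do as well. For the injectivity of $(\iota_G)_{[m]}$ I would construct a retraction. The projection $G\to G_{[m]}$ and the composite $H\to H_{[m]}\xrightarrow{b_{[m]}^{-1}}S_{[m]}\xrightarrow{a_{[m]}}G_{[m]}$ agree on $S$, since for $s\in S$ both send $s$ to $a_{[m]}(\bar s)$, where $\bar s$ is the image of $s$ in $S_{[m]}$; by the universal property of the pushout they assemble into a homomorphism $r\colon P\to G_{[m]}$ with $r\circ\iota_G$ equal to the projection. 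As $G_{[m]}$ is $m$-step solvable, $r$ kills $P^{(m+1)}$ and factors through $P_{[m]}$, providing a one-sided inverse to $(\iota_G)_{[m]}$. For surjectivity I would use that $b_{[m]}$ is onto: every $h\in H$ can be written $h=b(s)\,w$ with $s\in S$ and $w\in H^{(m+1)}$. In $P$ we have $\iota_H(b(s))=\iota_G(a(s))\in\iota_G(G)$ while $\iota_H(w)\in\iota_H(H)^{(m+1)}\subseteq P^{(m+1)}$, so $\iota_H(H)\subseteq\iota_G(G)\cdot P^{(m+1)}$. Since $P$ is generated by $\iota_G(G)$ and $\iota_H(H)$, this forces $P=\iota_G(G)\cdot P^{(m+1)}$, i.e.\ $(\iota_G)_{[m]}$ is onto. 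The assertion for $\iota_H$ is identical with the roles of $G$ and $H$ exchanged.

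I expect the algebraic claim, rather than the topology, to be the crux. Unlike the lower central series, the ordinary derived series is not functorial under homology equivalences, so there is no homological shortcut analogous to the order-$0$ case in Remark \ref{rem_H}; what rescues the argument is that condition (iv) furnishes \emph{honest} isomorphisms $a_{[m]},b_{[m]}$ on solvable quotients, and the key device is to feed $b_{[m]}^{-1}$ into the universal property of the amalgamated product in order to manufacture the retraction $r$. A pleasant feature of this approach is that it never invokes injectivity of $a$ or $b$, so it applies verbatim to the group-theoretic pushout and requires no delicate study of how $G$ and $H$ sit inside $P$.
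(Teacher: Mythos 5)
Your proof is correct, and it reaches the conclusion by a different organization than the paper's. Both arguments begin with van Kampen's theorem, writing $\pi_1 W = \pi_1 M *_{\pi} \pi_1 N$, but the paper then works in one stroke: it takes $\Pi$ to be the normal closure in $\pi_1 W$ of the images of $(\pi_1 M)^{(m+1)}$ and $(\pi_1 N)^{(m+1)}$, identifies the quotient $\pi_1 W / \Pi$ with the pushout $\bigl(\pi_1 M / (\pi_1 M)^{(m+1)}\bigr) *_{\Gamma_m} \bigl(\pi_1 N / (\pi_1 N)^{(m+1)}\bigr) \cong \Gamma_m$ (this is where condition (iv) for both cylinders enters, since a pushout of two isomorphisms is an isomorphism), and then observes that $\Pi \subseteq (\pi_1 W)^{(m+1)}$, while solvability of $\Gamma_m$ forces the reverse inclusion, so that $\Pi = (\pi_1 W)^{(m+1)}$ and the marking maps are isomorphisms. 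You never pin down $(\pi_1 W)^{(m+1)}$ at all; instead you split the statement into split injectivity, via the retraction $r$ manufactured from $a_{[m]} \circ b_{[m]}^{-1}$ and the universal property, and surjectivity, via the coset argument $\iota_H(H) \subseteq \iota_G(G) \cdot P^{(m+1)}$. In fact your $r$ coincides, up to the identification $\pi_1 W / \Pi \cong \Gamma_m \cong G_{[m]}$, with the paper's quotient map, so both proofs rest on the same two pillars (the universal property of the pushout and the vanishing of the $(m+1)$-st derived subgroup of the quotients); what differs is the bookkeeping. The paper's version is shorter and yields the explicit description $(\pi_1 W)^{(m+1)} = \Pi$, whereas yours isolates exactly where each hypothesis is used, and, as you note, applies verbatim to the group-theoretic pushout without any injectivity of $a$ or $b$ — though the paper's argument, despite the amalgamated-product notation, does not need injectivity either. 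One terminological slip: $G_{[m]} = G/G^{(m+1)}$ is solvable of derived length at most $m+1$, not ``$m$-step solvable''; what your argument actually uses is $(G_{[m]})^{(m+1)} = 1$, which is correct.
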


\begin{proof}
We only need to check that $(M, i_{\pm}) \cdot (N, j_{\pm})$ satisfies the condition (iv).

By the van Kampen theorem $\pi_1 (M \cup_{i_- \circ (j_+)^{-1}} N) = \pi_1 M *_{\pi} \pi_1 N.$
Let $\Pi$ be the subgroup normally generated by elements of $(\pi_1 M)^{(m+1)} *_{\pi^{(m+1)}} (\pi_1 N)^{(m+1)}.$
Then
\[ (\pi_1 M *_{\pi} \pi_1 N) / \Pi \cong (\pi_1 M / (\pi_1 M)^{(m+1)}) *_{\Gamma_m} (\pi_1 N / (\pi_1 N)^{(m+1)}) \cong \Gamma_m. \]
Since $\Pi$ is a subgroup of $(\pi_1 M *_{\pi} \pi_1 N)^{(m+1)},$ $(i_+)_*, (j_-)_* \colon \Gamma_m \to (\pi_1 M *_{\pi} \pi_1 N) / (\pi_1 M *_{\pi} \pi_1 N)^{(m+1)}$ are isomorphisms.
\end{proof}

The following lemma can be seen at once from the definition and the observation that $\mathcal{C}_{0, 0}^{(0)}$ and $\mathcal{C}_{0, 1}^{(0)}$ are naturally isomorphic to the monoid $\theta_3$ of integral homology $3$-spheres with the connected sum operation.
See Theorem \ref{thm_C} for the other cases.

\begin{lem} \label{lem_TC}
(i) $\mathcal{C}_{0, 0}^{(m)} = \mathcal{C}_{0, 1}^{(m)} = \mathcal{C}_{0, 0}^{(0)} = \mathcal{C}_{0, 1}^{(0)} = \theta_3$ for $m \geq 0.$

(ii) $\mathcal{C}_{0, 2}^{(m)} = \mathcal{C}_{0, 2}^{(1)}$ for $m \geq 1.$

(iii) $\mathcal{C}_{1, 0}^{(m)} = \mathcal{C}_{1, 0}^{(1)}$ for $m \geq 1.$
\end{lem}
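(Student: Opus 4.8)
The plan is to reduce all three parts to a single stabilization property of the derived series and then to feed in the cited identifications $\mathcal{C}_{0,0}^{(0)} = \mathcal{C}_{0,1}^{(0)} = \theta_3$. Since the paper already records that $\mathcal{C}_{g,n}^{(m+1)}$ is a submonoid of $\mathcal{C}_{g,n}^{(m)}$, one inclusion is automatic in every case: $\mathcal{C}_{0,0}^{(m)} \subseteq \mathcal{C}_{0,0}^{(0)}$, $\mathcal{C}_{0,1}^{(m)} \subseteq \mathcal{C}_{0,1}^{(0)}$ for all $m$, and $\mathcal{C}_{0,2}^{(m)} \subseteq \mathcal{C}_{0,2}^{(1)}$, $\mathcal{C}_{1,0}^{(m)} \subseteq \mathcal{C}_{1,0}^{(1)}$ for $m \geq 1$. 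So the entire content is the reverse inclusions, namely that the lowest-order instance of condition (iv) already forces (iv) at every higher order. These are statements about a single $(M, i_\pm)$, since everything outside condition (iv)---the conditions (i)--(iii), the isomorphism relation, and the stacking product---is common to all orders.

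The key fact I would isolate first is purely group-theoretic: for any group $G$, if $G^{(1)}$ is perfect, equivalently if $G^{(2)} = G^{(1)}$, then $G^{(m)} = G^{(1)}$ for every $m \geq 1$. This is an immediate induction, since $G^{(m)} = G^{(1)}$ gives $G^{(m+1)} = [G^{(m)}, G^{(m)}] = [G^{(1)}, G^{(1)}] = G^{(2)} = G^{(1)}$; consequently the tower $G/G^{(m+1)}$ becomes constant in $m$ once it first turns abelian, and the natural projections between consecutive quotients are isomorphisms beyond that point. Writing $G = \pi_1 M$, I would then dispatch the cases. For (i), $\pi_1\Sigma_{0,0} = \pi_1\Sigma_{0,1} = 1$, so $\Gamma_m = 1$ for all $m$, and condition (iv) at order $0$ reads $G/G^{(1)} = H_1(M) = 0$, i.e.\ $G$ is perfect; hence $G^{(m+1)} = G$ and $G/G^{(m+1)} = 1 = \Gamma_m$ for all $m$, so every order-$0$ cylinder is of order $m$. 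This yields $\mathcal{C}_{0,0}^{(m)} = \mathcal{C}_{0,0}^{(0)}$ and $\mathcal{C}_{0,1}^{(m)} = \mathcal{C}_{0,1}^{(0)}$, and combining with the cited $\theta_3$ identification gives (i). For (ii) and (iii), $\pi_1\Sigma_{0,2} = \Z$ and $\pi_1\Sigma_{1,0} = \Z^2$ are abelian, so $\Gamma_m = \pi$ for all $m \geq 0$; the order-$1$ condition says $G/G^{(2)} \cong \pi$ is abelian, i.e.\ $G^{(1)} = G^{(2)}$, and the isolated fact then gives $G^{(m+1)} = G^{(2)}$, whence $G/G^{(m+1)} = G/G^{(2)}$. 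I would make the final passage precise with the commuting square relating $(i_\pm)_* \colon \Gamma_m \to G/G^{(m+1)}$ to $(i_\pm)_* \colon \Gamma_1 \to G/G^{(2)}$ through the projections (which are now isomorphisms), so that the order-$m$ arrow inherits bijectivity from the order-$1$ one.

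The only genuinely substantive point---and the reason (ii) and (iii) begin at $m = 1$ rather than $m = 0$---is that the order-$0$ hypothesis controls merely $H_1(M) = G/G^{(1)}$ and cannot force $G^{(1)}$ to be perfect when $\pi$ is a nontrivial abelian group; it is precisely the order-$1$ hypothesis, used together with the abelianness of $\pi$, that collapses $G^{(1)}/G^{(2)}$ to zero and triggers the stabilization. For the sphere and disk the abelianness of $\pi = 1$ is strong enough that order $0$ already suffices, which is why (i) holds for all $m \geq 0$. Beyond this dichotomy I expect no real obstacle: the argument is a short induction plus the functoriality of the derived-series quotients, and the homological input needed for the $\theta_3$ identifications is supplied by Remark \ref{rem_H} and the observation preceding the lemma.
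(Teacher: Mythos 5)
Your proof is correct and matches the paper's intent: the paper offers no written argument, declaring the lemma immediate from the definition together with the identification $\mathcal{C}_{0,0}^{(0)} = \mathcal{C}_{0,1}^{(0)} = \theta_3$, and your stabilization argument (perfectness of $\pi_1 M$ in case (i), and $G^{(1)} = G^{(2)}$ forcing $G^{(m)} = G^{(1)}$ when $\pi$ is abelian in cases (ii) and (iii)) is exactly the content being invoked. Nothing is missing.
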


The same argument as the proof of \cite[Proposition 2. 4]{GS2} gives the following proposition.

\begin{prop}
The monoid $\mathcal{C}_{g, n}^{(m)}$ is not finitely generated.
\end{prop}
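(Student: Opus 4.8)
The plan is to follow the connected-sum argument of Goda and Sakasai: I would exhibit infinitely many prime closed summands realizable inside homology cylinders of order $m$, and then invoke the uniqueness of the prime decomposition of $3$-manifolds.

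First I would check that connected sum with an integral homology $3$-sphere preserves being a homology cylinder of order $m$. Given $(M, i_{\pm}) \in \mathcal{C}_{g, n}^{(m)}$ and $N \in \theta_3$, form $M \# N$ by summing at an interior point, retaining the markings $i_{\pm}$. By the van Kampen theorem $\pi_1(M \# N) = \pi_1 M * \pi_1 N$, and since $H_1(N) = 0$ the group $\pi_1 N$ is perfect, so $\pi_1 N = (\pi_1 N)^{(1)} \subseteq (\pi_1 M * \pi_1 N)^{(1)}$ and inductively $\pi_1 N \subseteq (\pi_1 M * \pi_1 N)^{(m+1)}$ for all $m$. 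Hence the projection $\pi_1 M * \pi_1 N \to \pi_1 M$ killing the normal closure of $\pi_1 N$ induces an isomorphism $(\pi_1(M \# N)) / (\pi_1(M \# N))^{(m+1)} \cong (\pi_1 M) / (\pi_1 M)^{(m+1)}$ compatible with $i_{\pm}$, so the condition (iv) is inherited and $(M \# N, i_{\pm}) \in \mathcal{C}_{g, n}^{(m)}$.

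Next I would define, for each $[M] \in \mathcal{C}_{g, n}^{(m)}$, the multiset $P(M)$ of homeomorphism types of the closed prime summands appearing in the Milnor prime decomposition of $M$; since $(i_+)_* \colon H_*(\Sigma) \to H_*(M)$ is an isomorphism by Remark \ref{rem_H}, these closed summands are integral homology $3$-spheres. The essential point is the additivity $P(M \cdot N) = P(M) \sqcup P(N)$ under stacking: the summing $2$-spheres realizing the prime decompositions of $M$ and of $N$ can be isotoped off the gluing surface $i_- \circ (j_+)^{-1}$, and no new closed prime summand is created, so the closed prime summands of $M \cup N$ are exactly those of $M$ together with those of $N$. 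Combined with Milnor's unique factorization, $P$ descends to a well-defined monoid homomorphism from $\mathcal{C}_{g, n}^{(m)}$ to the free commutative monoid on homeomorphism types of prime integral homology $3$-spheres.

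Finally I would derive the contradiction. For any prime integral homology $3$-sphere $N$, the first step gives $M(id) \# N \in \mathcal{C}_{g, n}^{(m)}$ with $P(M(id) \# N) = \{N\}$. If $\mathcal{C}_{g, n}^{(m)}$ were generated by $[M_1], \dots, [M_k]$, then every element would be a product of the $M_i$, so by additivity every realized multiset $P$ would only involve primes from the finite set $\bigcup_i P(M_i)$. But there are infinitely many pairwise non-homeomorphic prime integral homology $3$-spheres (for instance suitable Brieskorn spheres $\Sigma(2, 3, 6\ell \pm 1)$), each realized as above, which is a contradiction. The main obstacle I expect is the geometric additivity $P(M \cdot N) = P(M) \sqcup P(N)$: one must argue that the reducing spheres survive the gluing and that stacking along $\Sigma$ introduces no new closed prime summand. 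Everything else --- perfectness of $\pi_1 N$ in the derived quotients, uniqueness of prime decomposition, and the infinitude of prime homology spheres --- is standard.
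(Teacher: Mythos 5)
Your proposal is correct and follows essentially the same route as the paper, which invokes the argument of Goda--Sakasai: both construct a monoid epimorphism onto $\theta_3$ (equivalently, onto the free commutative monoid on prime integral homology $3$-spheres) by extracting the closed prime summands, verify additivity under stacking and surjectivity via connected sum with homology spheres, and conclude from the infinitude of prime homology spheres. The only difference is expository: you spell out the perfectness argument for condition (iv) and the additivity of $P$, which the paper leaves implicit in its reference to \cite{GS2}.
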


In fact, we have an epimorphism $F \colon \mathcal{C}_{g, n}^{(m)} \to \theta_3$ as follows.
For $(M, i_{\pm}) \in \mathcal{C}_{g, n}^{(m)},$ we can write $M = M' \sharp M'',$ where $M'$ is the prime factor of $M$ containing $\partial M.$
Then $F(M, i_{\pm}) := M''.$
Therefore as pointed out in \cite{GS2} it is reasonable to consider the submonoid $\overline{\mathcal{C}}_{g, n}^{(m)}$ consisting of all $(M, i_{\pm}) \in\mathcal{C}_{g, n}^{(m)}$ with irreducible $M.$
Note that $\overline{\mathcal{C}}_{0, 0}^{(m)} = \emptyset$ and $\overline{\mathcal{C}}_{0, 1}^{(m)} = 1$ for all $m.$

Let $\zeta_i \in \pi$ be a representative of each boundary circle.
We denote by $\out^*(\Gamma_m)$ the group of outer automorphisms of $\Gamma_m$ which preserve the conjugacy class of $[\zeta_i] \in \Gamma_m$ for all $i.$
A homomorphism $\varphi_m \colon \mathcal{C}_{g,n}^{(m)} \to \out^*(\Gamma_m)$ is given by
\[ \varphi_m(M, i_{\pm}) := [(i_+)_*^{-1} \circ (i_-)_*]. \]
It is easily seen that $\varphi_m$ does not depend on the choices of a base point and $\zeta_i.$
We define
\begin{align*}
\mathcal{IC}_{g,n}^{(m)} &:= \ker \varphi_m, \\
\mathcal{I} \overline{\mathcal{C}}_{g,n}^{(m)} &:= \ker \varphi_m|_{\overline{\mathcal{C}}_{g,n}^{(m)}}.
\end{align*}

\begin{rem}
In \cite[Proposition 2.\ 3]{GS1} Goda and Sakasai showed that $\varphi_0(M, i_{\pm})$ preserves the intersection form of $\Sigma$ for all homology cylinders $(M, i_{\pm}).$
\end{rem}

\subsection{The homology cobordism groups of homology cylinders of higher-order}
Next we consider homology cobordisms for homology cylinders of higher-order.
See \cite{GL}, \cite{Le} for the case of ordinary homology cylinders.

\begin{defn} \label{defn_C}
For $(M, i_{\pm}), (N, j_{\pm}) \in \mathcal{C}_{g, n}^{(m)},$ we write $(M, i_{\pm}) \sim_m (N, j_{\pm})$ if there exists a compact oriented smooth $4$-manifold satisfying the following:

(i) $\partial W = M \cup_{i_+ \circ j_+^{-1}, i_- \circ j_-^{-1}} (-N),$

(ii) $H_*(M) \to H_*(W), H_*(N) \to H_*(W)$ are isomorphisms,

(iii) $\pi_1 M / (\pi_1 M)^{(m+1)} \to \pi_1 W / (\pi_1 W)^{(m+1)}, \pi_1 N / (\pi_1 N)^{(m+1)} \to \pi_1 W / (\pi_1 W)^{(m+1)}$ are isomorphisms.
\end{defn}

\begin{lem}
The relation $\sim_m$ is an equivalence relation on $\mathcal{C}_{g, n}^{(m)}$ which is compatible with the product operation.
\end{lem}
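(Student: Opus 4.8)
The plan is to check in turn that $\sim_m$ is reflexive, symmetric and transitive, and then that it respects stacking. In every case I build the required $4$-manifold $W$ by gluing standard pieces and verify conditions (ii) and (iii) of Definition \ref{defn_C} separately: condition (ii) will always follow from Mayer--Vietoris, whereas condition (iii) is the genuine point and is handled by a derived-series computation generalizing the proof of Lemma \ref{lem_P}.

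Reflexivity and symmetry are immediate. For reflexivity take $W = M \times [0,1]$; after rounding corners $\partial W$ is the double $M \cup_{\mathrm{id}} (-M)$ and $M \hookrightarrow W$ is a homotopy equivalence, so (ii) and (iii) hold. For symmetry, if $W$ realizes $(M, i_\pm) \sim_m (N, j_\pm)$ then $-W$ realizes $(N, j_\pm) \sim_m (M, i_\pm)$, since reversing orientation turns $\partial W = M \cup (-N)$ into $N \cup (-M)$ with the inverse gluing maps, while (ii) and (iii) are symmetric in the two ends. For transitivity, given $W_1$ between $(M, i_\pm)$ and $(N, j_\pm)$ and $W_2$ between $(N, j_\pm)$ and $(L, k_\pm)$, I set $W := W_1 \cup_N W_2$, glued along the common boundary piece $N$; composing the gluing maps gives $\partial W = M \cup (-L)$ with the identifications $i_\pm \circ k_\pm^{-1}$, so (i) holds.

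The real work is to verify (ii) and (iii) for $W = W_1 \cup_N W_2$. Since $H_*(N) \to H_*(W_1)$ and $H_*(N) \to H_*(W_2)$ are isomorphisms, the Mayer--Vietoris sequence of this decomposition breaks into short exact sequences and forces $H_*(W_1) \to H_*(W)$ and $H_*(W_2) \to H_*(W)$ to be isomorphisms; composing with (ii) for $W_1$ and $W_2$ yields (ii) for $W$. For (iii), van Kampen gives a pushout $\pi_1 W = \pi_1 W_1 *_{\pi_1 N} \pi_1 W_2$. Writing $G_i = \pi_1 W_i$ and $H = \pi_1 N$, let $\Pi \trianglelefteq \pi_1 W$ be the normal closure of $G_1^{(m+1)} \cup G_2^{(m+1)}$. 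Killing these subgroups turns the pushout into the pushout of the solvable quotients, exactly as in Lemma \ref{lem_P}; since condition (iii) for each $W_i$ says that $H \to G_i / G_i^{(m+1)}$ is surjective with kernel $H^{(m+1)}$, that pushout is $\Gamma_m$. On the other hand $\Pi \subseteq (\pi_1 W)^{(m+1)}$ by functoriality of the derived series, and $\Gamma_m^{(m+1)} = 1$ forces the reverse inclusion; hence $\Pi = (\pi_1 W)^{(m+1)}$, so $\pi_1 W / (\pi_1 W)^{(m+1)} \cong \Gamma_m$ and the map from each $G_i / G_i^{(m+1)}$ is an isomorphism. Composing with (iii) for $W_1$ and $W_2$ gives (iii) for $W$, so $\sim_m$ is transitive.

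Finally, for compatibility with stacking, suppose $W_1$ realizes $(M_1, i_\pm) \sim_m (N_1, j_\pm)$ and $W_2$ realizes $(M_2, i'_\pm) \sim_m (N_2, j'_\pm)$. In $\partial W_1$ the marking surfaces $i_-(\Sigma)$ and $j_-(\Sigma)$ are identified into a single bicollared copy of $\Sigma$, and likewise $i'_+(\Sigma) = j'_+(\Sigma)$ in $\partial W_2$; I form $W := W_1 \cup_{\Sigma \times [-1,1]} W_2$ by gluing bicollar neighbourhoods of these surfaces so that the $M$-sides are matched by $i_- \circ (i'_+)^{-1}$ and the $N$-sides by $j_- \circ (j'_+)^{-1}$. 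This reconstitutes the stacked manifolds along the $+$ seams, so $\partial W = (M_1 \cdot M_2) \cup (-(N_1 \cdot N_2))$ with the product markings, giving (i). Conditions (ii) and (iii) then follow from the same two arguments, now with the amalgamation taken over $\pi = \pi_1 \Sigma$: Remark \ref{rem_H} makes $\Sigma \to W_1, W_2$ homology isomorphisms, feeding Mayer--Vietoris, while each $\pi \to G_i / G_i^{(m+1)}$ is a surjection with kernel $\pi^{(m+1)}$, so the derived-series argument together with Lemma \ref{lem_P} identifies both $\pi_1 W / (\pi_1 W)^{(m+1)}$ and $\pi_1(M_1 \cdot M_2) / (\pi_1(M_1 \cdot M_2))^{(m+1)}$ with $\Gamma_m$ compatibly. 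I expect condition (iii) to be the main obstacle throughout: solvable quotients are not governed by a Mayer--Vietoris sequence, and the point that makes the gluings work is that the normal closure of the $(m+1)$-st derived subgroups of the pieces already equals the $(m+1)$-st derived subgroup of the amalgam, because $\Gamma_m$ is itself $(m+1)$-step solvable.
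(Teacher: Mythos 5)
Your proof is correct and is exactly the argument the paper intends: the paper itself omits the proof, referring to Levine for $m=0$ and stating that the general case follows by ``an almost same technique as in the proof of Lemma \ref{lem_P}'', and your constructions (product cobordism for reflexivity, orientation reversal for symmetry, gluing along $N$ for transitivity, gluing along bicollars of the middle surfaces for stacking) combined with Mayer--Vietoris for condition (ii) and the van Kampen plus derived-series normal-closure argument for condition (iii) of Definition \ref{defn_C} are precisely that technique. The only details you leave implicit---corner rounding and the exact regular-neighbourhood structure near $\partial \Sigma$ when $n>0$---are standard and equally suppressed in the cited sources.
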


For the proof in the case where $m = 0,$ we refer to \cite[p.\ 246]{Le}.
Using an almost same technique as in the proof of Lemma \ref{lem_P}, we can also prove the lemma in the general case, and so we omit the proof.

We define $\mathcal{H}_{g, n}^{(m)} := \mathcal{C}_{g, n}^{(m)} / \sim_m,$ which has a natural group structure induced by the monoid structure of $\mathcal{C}_{g, n}^{(m)}.$
The inverse of $[M, i_{\pm}] \in \mathcal{H}_{g, n}^{(m)}$ is given by $[-M, i_{\mp}].$
There is a natural homomorphism $\mathcal{H}_{g, n}^{(m+1)} \to \mathcal{H}_{g, n}^{(m)}$ for each $m.$

\begin{rem}
The group $\mathcal{H}_{g, n}^{(0)}$ is nothing but the smooth homology cobordism group $\mathcal{H}_{g, n}$ of ordinary homology cylinders.
We can see that $\mathcal{H}_{0, 0}^{(0)}$ and $\mathcal{H}_{0, 1}^{(0)}$ are isomorphic to the smooth homology cobordism group $\Theta_3$ of integral homology $3$-spheres, and that $\mathcal{H}_{0, 2}^{(0)}$ is isomorphic to $\Z \oplus \mathcal{C}_{\Z},$ where $\mathcal{C}_{\Z}$ is the smooth concordance group of knots in integral homology $3$-spheres.
The $\Z$ factor comes from framings of knots.
See \cite[Section 2.\ 2]{CFK} and the references given there for more details.
\end{rem}

We can also consider topological homology cobordisms instead of smooth ones in Definition \ref{defn_C}.
Let $\mathcal{H}_{g, n}^{(m)~\text{top}}$ be the topological homology cobordism group of homology cylinders of order $m.$
Since we consider only topological methods, in fact, we can obtain analogous results on $\mathcal{H}_{g, n}^{(m)~\text{top}}$ to all the theorems on $\mathcal{H}_{g, n}^{(m)}$ in this paper.

Using the results of Freedman~\cite{Fre}, Furuta~\cite{Fu} and Fintushel and Stern~\cite{FS}, Cha-Friedl-Kim~\cite[Theorem 1.\ 1]{CFK} showed that the kernel of the epimorphism $\mathcal{H}_{g, n}^{(0)} \to \mathcal{H}_{g, n}^{(0)~\text{top}}$ contains an abelian group of infinite rank, which is given by the image of a homomorphism $\Theta_3 \to \mathcal{H}_{g, n}^{(0)}.$
Since the homomorphism $\Theta_3 \to \mathcal{H}_{g, n}^{(0)}$ factors through $\mathcal{H}_{g, n}^{(m)},$ the kernel of the epimorphism $\mathcal{H}_{g, n}^{(m)} \to \mathcal{H}_{g, n}^{(m)~\text{top}}$ also contains an abelian group of infinite rank.

The proof of the following lemma is straightforward from the definition and Lemma \ref{lem_TC}.
See Theorem \ref{thm_H} for general cases.

\begin{lem} \label{lem_TH}
(i) $\mathcal{H}_{0, 0}^{(m)} = \mathcal{H}_{0, 1}^{(m)} = \mathcal{H}_{0, 0}^{(0)} = \mathcal{H}_{0, 1}^{(0)} = \Theta_3$ for $m \geq 0.$

(ii) $\mathcal{H}_{0, 2}^{(m)} = \mathcal{H}_{0, 2}^{(1)}$ for $m \geq 1.$

(iii) $\mathcal{H}_{1, 0}^{(m)} = \mathcal{H}_{1, 0}^{(1)}$ for $m \geq 1.$
\end{lem}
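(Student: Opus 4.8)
The plan is to reduce everything to Lemma \ref{lem_TC} at the level of monoids, so that the only remaining task is to show that the homology cobordism relations $\sim_m$ themselves stabilize. Since $\mathcal{H}_{g, n}^{(m)} = \mathcal{C}_{g, n}^{(m)} / \sim_m$, once Lemma \ref{lem_TC} tells us that the underlying monoids agree in each case, it suffices to verify that for two homology cylinders lying in the common monoid the relations $\sim_m$ coincide over the relevant range of $m$. The only $m$-dependent part of Definition \ref{defn_C} is condition (iii), which concerns the tower $\pi_1 W / (\pi_1 W)^{(m+1)}$ of a cobounding $4$-manifold $W$, so the whole lemma comes down to controlling the derived series of $\pi_1 W$.

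The mechanism in all three cases is the following elementary observation on derived series: if $G^{(1)} = G^{(2)}$ for a group $G$, then $G^{(k)} = G^{(1)}$ for every $k \geq 1$, by the trivial induction $G^{(k+1)} = [G^{(k)}, G^{(k)}] = [G^{(1)}, G^{(1)}] = G^{(2)} = G^{(1)}$; in particular $G / G^{(m+1)} = G / G^{(1)} = H_1(G)$ for all $m \geq 1$. In case (i) one has $\Gamma_m = 1$, and the degenerate form $G^{(1)} = G$ applies: a homology cylinder over $S^2$ or $D^2$ has $H_1(M) = 0$ by Remark \ref{rem_H}, hence $\pi_1 M$ is perfect and $\pi_1 M / (\pi_1 M)^{(m+1)} = 1$ for all $m$. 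In cases (ii) and (iii) one has $\Gamma_m = \Z$ and $\Gamma_m = \Z^2$ respectively, both abelian and independent of $m$; the order-$1$ condition (iv) already forces $\pi_1 M / (\pi_1 M)^{(2)}$ to be abelian, i.e. $(\pi_1 M)^{(1)} = (\pi_1 M)^{(2)}$, and the observation above shows this is equivalent to the order-$m$ condition for every $m \geq 1$ --- which is exactly what underlies Lemma \ref{lem_TC}.

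For the cobordism relations I would argue as follows. Suppose $(M, i_{\pm}) \sim_1 (N, j_{\pm})$ (respectively $\sim_0$ in case (i)) via a $4$-manifold $W$. The homology condition (ii) gives $H_1(W) \cong H_1(M)$, which is $0$ in case (i) and $\Z$ (resp.\ $\Z^2$) in cases (ii), (iii). In case (i) this makes $\pi_1 W$ perfect, so condition (iii) is automatically satisfied for every $m$. In cases (ii), (iii), condition (iii) for $m = 1$ forces $\pi_1 W / (\pi_1 W)^{(2)}$ to be abelian, hence $(\pi_1 W)^{(1)} = (\pi_1 W)^{(2)}$, and the stabilization observation gives $(\pi_1 W)^{(m+1)} = (\pi_1 W)^{(2)}$ for all $m \geq 1$. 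Consequently the groups $\pi_1 W / (\pi_1 W)^{(m+1)}$ and $\pi_1 M / (\pi_1 M)^{(m+1)}$, together with the inclusion-induced map between them, are literally identical for all $m$ in range with their values at the base index, so condition (iii) at the base index is equivalent to condition (iii) at every larger $m$. Running the same argument in reverse shows $\sim_m$ implies $\sim_1$ (resp.\ $\sim_0$). Hence $\sim_m$ and $\sim_1$ (resp.\ $\sim_0$) define the same equivalence relation on the common monoid of Lemma \ref{lem_TC}, and passing to quotients yields the asserted equalities of groups, with $\mathcal{H}_{0, 0}^{(m)} = \mathcal{H}_{0, 1}^{(m)} = \Theta_3$ as recorded in the preceding remark.

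The computations are routine; the only point that needs a little care --- and the closest thing to an obstacle --- is that condition (iii) requires the inclusion-induced map to be an isomorphism, not merely that the two quotient groups be abstractly isomorphic. This is why it matters that, once the derived series of $\pi_1 W$ stabilizes, the relevant quotients for different $m$ are identified with a single fixed quotient through genuine equalities of subgroups, so that the comparison maps agree on the nose rather than only up to some unspecified isomorphism.
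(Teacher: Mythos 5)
Your proposal is correct and matches the paper's intended argument: the paper gives no written proof beyond saying the lemma ``is straightforward from the definition and Lemma \ref{lem_TC},'' and your write-up is precisely that argument, reducing to the equality of monoids and then showing the relations $\sim_m$ stabilize because the derived series of $\pi_1 W$ stabilizes (perfect in case (i), $(\pi_1 W)^{(1)} = (\pi_1 W)^{(2)}$ in cases (ii), (iii)). Your care about the inclusion-induced maps being literal equalities of quotients, not just abstract isomorphisms, is exactly the right point to check.
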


\begin{prop}[\cite{CFK}, \cite{GL}, \cite{Le}]
The homomorphism $\mathcal{M}_{g, n} \to \mathcal{C}_{g, n}^{(0)} \to \mathcal{H}_{g, n}^{(0)}$ is injective.
\end{prop}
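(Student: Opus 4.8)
The plan is to refine the homomorphism $\varphi_0$, which only records the action on $\Gamma_0 = H_1(\Sigma)$, to the action on the \emph{entire} tower of nilpotent quotients of $\pi$, and then to observe that on the image of $\mathcal{M}_{g,n}$ this reproduces the Johnson action, whose total kernel is trivial. Throughout I write $G_k$ for the lower central series of a group $G$, with $G_1 = G$ and $G_{k+1} = [G, G_k]$; note that these finer quotients, not just the derived quotients $\Gamma_m$, are what make the argument work.

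First I would construct, for each $k \geq 1$, an invariant $\sigma_k$ on $\mathcal{C}_{g,n}^{(0)}$. By Remark~\ref{rem_H} the embeddings $i_\pm$ induce isomorphisms on every homology group; in particular they induce an isomorphism on $H_1$ and an epimorphism on $H_2$, so Stallings' theorem yields isomorphisms $(i_\pm)_* \colon \pi/\pi_k \xrightarrow{\sim} \pi_1 M/(\pi_1 M)_k$. I then set
\[ \sigma_k(M, i_\pm) := [(i_+)_*^{-1} \circ (i_-)_*] \in \out(\pi/\pi_k). \]
The same van Kampen computation as in Lemma~\ref{lem_P} shows $\sigma_k$ is multiplicative, so the collection $(\sigma_k)_k$ is a monoid homomorphism $\mathcal{C}_{g,n}^{(0)} \to \varprojlim_k \out(\pi/\pi_k)$ refining $\varphi_0$.

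Next I would check that each $\sigma_k$ is a $\sim_0$-invariant; this is where condition (ii) of Definition~\ref{defn_C} (full homology isomorphism, not merely on $H_1$) is essential. If $(M, i_\pm) \sim_0 (N, j_\pm)$ via $W$, then by (ii) the inclusions $M \hookrightarrow W$ and $N \hookrightarrow W$ again induce an isomorphism on $H_1$ and an epimorphism on $H_2$, so Stallings' theorem gives isomorphisms $\pi_1 M/(\pi_1 M)_k \cong \pi_1 W/(\pi_1 W)_k \cong \pi_1 N/(\pi_1 N)_k$. Since $W$ is glued along $i_\pm \circ j_\pm^{-1}$, the two composites $\Sigma \xrightarrow{i_\pm} M \hookrightarrow W$ and $\Sigma \xrightarrow{j_\pm} N \hookrightarrow W$ coincide; writing $\iota_W^\pm$ for this common map, a short chase gives $\sigma_k(M,i_\pm) = (\iota_W^+)^{-1}\iota_W^- = \sigma_k(N,j_\pm)$ in $\out(\pi/\pi_k)$, the passage to $\out$ absorbing the basepoint-path ambiguities in $W$. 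Hence $\sigma_k$ descends to a homomorphism $\mathcal{H}_{g,n}^{(0)} \to \out(\pi/\pi_k)$.

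It remains to identify the composite with $\mathcal{M}_{g,n}$ and invoke faithfulness. For $\psi \in \mathcal{M}_{g,n}$ one has $\pi_1 M(\psi) \cong \pi$ with $(i_+)_* = \mathrm{id}$ and $(i_-)_* = \psi_*$, so $\sigma_k(M(\psi)) = [\psi_*]$ is precisely the action of $\psi$ on $\pi/\pi_k$, i.e. the $k$-th Johnson map. Thus if $\psi$ lies in the kernel of $\mathcal{M}_{g,n} \to \mathcal{H}_{g,n}^{(0)}$, then $\psi$ acts trivially on $\pi/\pi_k$ for every $k$, so $\psi$ lies in the intersection of the Johnson filtration. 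The main obstacle is to conclude $\psi = 1$ from this. For $n > 0$ the group $\pi$ is free and $\mathcal{M}_{g,n}$ acts faithfully on $\pi$ by honest automorphisms; since free groups are residually nilpotent ($\bigcap_k \pi_k = 1$), a trivial action on all $\pi/\pi_k$ forces $\psi_* = \mathrm{id}$, hence $\psi = 1$. For closed surfaces one works instead with the Dehn--Nielsen--Baer embedding $\mathcal{M}_{g,0} \hookrightarrow \out(\pi)$ and must control the inner-automorphism ambiguity, but the separating property $\bigcap_k \ker(\mathcal{M}_{g,n} \to \out(\pi/\pi_k)) = 1$ of the Johnson filtration is classical and is exactly the input supplied by \cite{GL, Le}. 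Finally the degenerate cases $(g,n) \in \{(0,0),(0,1),(0,2),(1,0)\}$, where $\pi$ is trivial or abelian, are checked directly, as there $\mathcal{M}_{g,n}$ is already detected by its action on $H_1$.
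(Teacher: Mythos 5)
Your argument has a fatal gap at the final step, and it is exactly the point the cited proofs are engineered to avoid. The invariants $\sigma_k$ you construct take values in $\out(\pi/\pi_k)$, so from $[M(\psi)]=[M(\mathrm{id})]$ in $\mathcal{H}_{g,n}^{(0)}$ you learn only that $\psi_*$ becomes \emph{inner} in every nilpotent quotient $\pi/\pi_k$ --- not that it becomes trivial. The sentence ``a trivial action on all $\pi/\pi_k$ forces $\psi_*=\mathrm{id}$'' silently upgrades outer information to automorphism-level information. This is not a cosmetic issue: for $n\geq 1$ the Dehn twist $T_\partial$ about a boundary-parallel curve acts on $\pi$ (basepoint on that boundary component) by conjugation by the boundary word $\zeta$, hence maps to $1$ in $\out(\pi/\pi_k)$ for \emph{every} $k$, while $T_\partial$ has infinite order in $\mathcal{M}_{g,n}$. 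So the kernel of your composite $\mathcal{M}_{g,n}\to\out(\pi/\pi_k)$ (for all $k$ simultaneously) contains all boundary twists, and no argument of this shape can prove injectivity. Two further claims are false as stated: $\mathcal{M}_{g,n}$ does \emph{not} act faithfully on $\pi$ when $n\geq 2$ (on the pair of pants $\Sigma_{0,3}$ with basepoint on $\partial_1$, the twist about a curve parallel to $\partial_2$ fixes both free generators, so it is invisible even in $\aut(\pi)$); and the degenerate case $(g,n)=(0,2)$ is \emph{not} ``detected by the action on $H_1$'' --- $\mathcal{M}_{0,2}\cong\Z$ is generated by a twist acting trivially on $H_1$, and its image in $\mathcal{H}_{0,2}\cong\Z\oplus\mathcal{C}_{\Z}$ is seen only through framing data.

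For comparison: the paper offers no proof of its own here (the proposition is quoted from \cite{GL}, \cite{Le}, \cite{CFK}), and those proofs differ from yours precisely where yours breaks. Since $i_+|_{\partial\Sigma}=i_-|_{\partial\Sigma}$, a homology cylinder over a surface with boundary carries a canonical basepoint, and in a homology cobordism $W$ the basepoints of $M$ and $N$ are identified in $\partial W$; hence Stallings' theorem produces $\aut(\pi/\pi_k)$-valued, not merely $\out$-valued, homology cobordism invariants. For $n=1$ this detects $T_\partial$, because conjugation by $\zeta$ is nontrivial in $\aut(\pi/\pi_k)$ for large $k$ (residual nilpotence of the free group plus triviality of its center), and then faithfulness of $\mathcal{M}_{g,1}\to\aut(\pi)$ finishes the argument as you intended. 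For $n\geq 2$, where even the $\aut$-level action fails, and for the closed case, one needs a genuinely different ingredient --- in \cite{CFK} the general case is reduced to the previously known ones by gluing surfaces along boundary components, using that such inclusions induce injections of mapping class groups compatible with the maps of homology cobordism groups. Your outline would need both the $\aut$ upgrade and such a reduction to become a proof.
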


Since this injection factors through $\mathcal{H}_{g, n}^{(m)},$ we obtain the following corollary.

\begin{cor}
The homomorphism $\mathcal{M}_{g, n} \to \mathcal{C}_{g, n}^{(m)} \to \mathcal{H}_{g, n}^{(m)}$ is injective for all $m.$
\end{cor}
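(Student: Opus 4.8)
The plan is to factor the homomorphism $\mathcal{M}_{g,n} \to \mathcal{H}_{g,n}^{(m)}$ through the homomorphism $\mathcal{M}_{g,n} \to \mathcal{H}_{g,n}^{(0)}$ that the preceding Proposition already asserts to be injective, and then to invoke the elementary fact that the first factor of an injective composite is itself injective.

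First I would compose the natural homomorphisms $\mathcal{H}_{g,n}^{(m)} \to \mathcal{H}_{g,n}^{(m-1)} \to \cdots \to \mathcal{H}_{g,n}^{(0)}$ recorded above into a single homomorphism $p \colon \mathcal{H}_{g,n}^{(m)} \to \mathcal{H}_{g,n}^{(0)} = \mathcal{H}_{g,n}$, which at the level of monoids is induced by the inclusion $\mathcal{C}_{g,n}^{(m)} \hookrightarrow \mathcal{C}_{g,n}^{(0)}$ followed by the quotient to $\mathcal{H}_{g,n}^{(0)}$. Then I would check that the square
\[
\begin{CD}
\mathcal{M}_{g,n} @>>> \mathcal{H}_{g,n}^{(m)} \\
@| @VV{p}V \\
\mathcal{M}_{g,n} @>>> \mathcal{H}_{g,n}^{(0)}
\end{CD}
\]
commutes, the horizontal maps being $\psi \mapsto [M(\psi)]$. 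This is immediate from the construction of $M(\psi)$: for $\psi \in \mathcal{M}_{g,n}$ the underlying manifold and its markings are literally the same whether $M(\psi)$ is regarded as an element of $\mathcal{C}_{g,n}^{(m)}$ or of $\mathcal{C}_{g,n}^{(0)}$ (indeed $M(\psi)$ is a homology cylinder of order $m$ for every $m$), so the inclusion $\mathcal{C}_{g,n}^{(m)} \hookrightarrow \mathcal{C}_{g,n}^{(0)}$ carries its class to its class and both routes around the square send $\psi$ to $[M(\psi)] \in \mathcal{H}_{g,n}^{(0)}$.

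Since the bottom arrow is injective by the Proposition, commutativity shows that the composite $p \circ (\psi \mapsto [M(\psi)])$ is injective, and therefore the top arrow $\mathcal{M}_{g,n} \to \mathcal{H}_{g,n}^{(m)}$ is injective, as claimed. There is no real obstacle; the only point meriting a line of verification is that $p$ is well defined, i.e.\ that $\sim_m$ refines $\sim_0$. This holds because conditions (i) and (ii) of Definition \ref{defn_C} are independent of $m$, while condition (iii) for $m = 0$ concerns only $H_1(-) = \pi_1(-)/(\pi_1(-))^{(1)}$ and is hence already subsumed by condition (ii); thus every $\sim_m$-cobordism is in particular a $\sim_0$-cobordism.
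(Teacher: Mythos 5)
Your proposal is correct and is essentially the paper's own argument: the paper likewise observes that the injection $\mathcal{M}_{g, n} \to \mathcal{H}_{g, n}^{(0)}$ of the preceding Proposition factors through $\mathcal{H}_{g, n}^{(m)}$, so the first factor $\mathcal{M}_{g, n} \to \mathcal{H}_{g, n}^{(m)}$ must be injective. Your extra verification that $\sim_m$ refines $\sim_0$ (so that the forgetful map $\mathcal{H}_{g, n}^{(m)} \to \mathcal{H}_{g, n}^{(0)}$ is well defined) is a correct spelling-out of a point the paper leaves implicit.
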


The homomorphism $\varphi_m \colon \mathcal{C}_{g,n}^{(m)} \to \out^*(\Gamma_m)$ induces a homomorphism $\mathcal{H}_{g,n}^{(m)} \to \out^*(\Gamma_m).$
By abuse of notation we also write $\varphi_m$ for the homomorphism.
We define
\[ \mathcal{IH}_{g,n}^{(m)} := \ker(\varphi_m \colon \mathcal{H}_{g,n}^{(m)} \to \out^*(\Gamma_m)). \]

%%%%%%% Section 3 %%%%%%%%%%%%%%%%%%%%%%%%%%%%%%%%%%%%%%%%%%%%%%%%%%%
\section{Reidemeister torsion homomorphisms} \label{sec_3}
\subsection{Reidemeister torsion}
First we review the definition of Reidemeister torsion over a skew field $\K$.
See \cite{Mi} and \cite{T1} for more details.

For a matrix over $\K$, we mean by an elementary row operation the addition of a left multiple of one row to another row.
After elementary row operations we can turn any matrix $A \in GL_k(\K)$ into a diagonal matrix $(d_{i, j})$.
Then the \textit{Dieudonn\'{e} determinant} $\det A$ is defined to be $[\prod_{i=1}^k d_{i, i}] \in \K_{ab}^{\times} := \K^{\times} / [\K^{\times}, \K^{\times}]$.

Let $C_* = (C_n \xrightarrow{\partial_n} C_{n-1} \to \cdots \to C_0)$ be a chain complex of finite dimensional right $\K$-vector spaces.
If we choose bases $b_i$ of $\im \partial_{i+1}$ and $h_i$ of $H_i(C_*)$ for $i = 0, 1, \dots n$, we can take a basis $b_i h_i b_{i-1}$ of $C_i$ as follows.
Picking a lift of $h_i$ in $\ker \partial_i$ and combining it with $b_i$, we first obtain a basis $b_i h_i$ of $C_i$.
Then picking a lift of $b_{i-1}$ in $C_i$ and combining it with $b_i h_i$, we can obtain a basis $b_i h_i b_{i-1}$ of $C_i$.

\begin{defn}
For given bases $\boldsymbol{c} = \{ c_i \}$ of $C_*$ and $\boldsymbol{h} = \{ h_i \}$ of $H_*(C_*)$, we choose a basis $\{ b_i \}$ of $\im \partial_*$ and define
\[ \tau(C_*, \boldsymbol{c}, \boldsymbol{h}) := \prod_{i=0}^n [b_i h_i b_{i-1} / c_i]^{(-1)^{i+1}} ~ \in \K_{ab}^{\times}, \]
where $[b_i h_i b_{i-1} / c_i]$ is the Dieudonn\'{e} determinant of the base change matrix from $c_i$ to $b_i h_i b_{i-1}$.
If $C_*$ is acyclic, then we write $\tau(C_*, \boldsymbol{c})$.
\end{defn}

It can be easily checked that $\tau(C_*, \boldsymbol{c}, \boldsymbol{h})$ does not depend on the choices of $b_i$ and $b_i h_i b_{i-1}$.

Torsion has the following multiplicative property.
Let
\[ 0 \to C_*' \to C_* \to C_*'' \to 0 \]
be a short exact sequence of finite chain complexes of finite dimensional right $\K$-vector spaces and let $\boldsymbol{c} = \{ c_i \}, \boldsymbol{c}' = \{ c_i' \}, \boldsymbol{c}'' = \{ c_i'' \}$ and $\boldsymbol{h} = \{ h_i \}, \boldsymbol{h}' = \{ h_i' \}, \boldsymbol{h}'' = \{ h_i'' \}$ be bases of $C_*, C_*', C_*''$ and $H_*(C_*), H_*(C_*'), H_*(C_*'')$.
Picking a lift of $c_i''$ in $C_i$ and combining it with the image of $c_i'$ in $C_i$, we obtain a basis $c_i' c_i''$ of $C_i$.
We denote by $\mathcal{H}_*$ the corresponding long exact sequence in homology, and by $\boldsymbol{d}$ the basis of $\mathcal{H}_*$ obtained by combining $\boldsymbol{h}, \boldsymbol{h}', \boldsymbol{h}''$.

\begin{lem}(\cite[Theorem 3.\ 1]{Mi}) \label{lem_M}
If $[c_i' c_i'' / c_i] = 1$ for all $i$, then
\[ \tau(C_*, \boldsymbol{c}, \boldsymbol{h}) = \tau(C_*', \boldsymbol{c}', \boldsymbol{h}') \tau(C_*'', \boldsymbol{c}'', \boldsymbol{h}'') \tau(\mathcal{H}_*, \boldsymbol{d}). \]
\end{lem}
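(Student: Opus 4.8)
The plan is to follow Milnor's original argument, reducing the general statement to the acyclic case via the standard filtration of each complex by cycles and boundaries. For a single based complex with chosen homology basis, the torsion is by definition assembled from a boundary basis $b_i$ of $\im \partial_{i+1}$, a lift of the homology basis $h_i$, and a lift $b_{i-1}$ of the boundary basis of $\im \partial_i$; in other words it records the base change governed by the two short exact sequences
\[ 0 \to Z_i \to C_i \xrightarrow{\partial_i} B_{i-1} \to 0, \qquad 0 \to B_i \to Z_i \to H_i(C_*) \to 0, \]
where $Z_i = \ker \partial_i$ and $B_i = \im \partial_{i+1}$. The same data are available for $C_*'$ and $C_*''$, and the short exact sequence of complexes, together with the snake lemma, produces the long exact sequence $\mathcal{H}_*$ relating the $H_i$.

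First I would treat the acyclic case, where $\mathcal{H}_* = 0$ and the claim reduces to $\tau(C_*, \boldsymbol{c}) = \tau(C_*', \boldsymbol{c}') \, \tau(C_*'', \boldsymbol{c}'')$. Here all connecting homomorphisms vanish, so the induced sequences of boundaries $0 \to B_i' \to B_i \to B_i'' \to 0$ are themselves short exact. Choosing $b_i$ so that it restricts to (a lift of) $b_i'$ and projects onto $b_i''$, the base-change matrix from $c_i$ to $b_i b_{i-1}$ becomes block triangular, with diagonal blocks the base-change matrices from $c_i'$ to $b_i' b_{i-1}'$ and from $c_i''$ to $b_i'' b_{i-1}''$. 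One first passes from the preferred basis $c_i$ to the block basis $c_i' c_i''$ using the hypothesis $[c_i' c_i'' / c_i] = 1$, and then invokes multiplicativity of the Dieudonn\'e determinant together with its insensitivity to the off-diagonal block of a triangular matrix. Taking the alternating product over $i$ yields the factorization.

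For the general case I would run the identical bookkeeping, but now the middle of $0 \to B_i' \to B_i \to B_i'' \to 0$ fails to be exact, and the discrepancy is measured precisely by the connecting maps of $\mathcal{H}_*$. Tracking how the homology bases $\boldsymbol{h}', \boldsymbol{h}, \boldsymbol{h}''$ and the boundary bases fail to align across the three complexes, and forming the alternating product of Dieudonn\'e determinants as before, produces exactly the additional factor $\tau(\mathcal{H}_*, \boldsymbol{d})$ coming from the long exact homology sequence regarded as an acyclic based complex with basis $\boldsymbol{d}$. Equivalently, one may reduce to the acyclic case by enlarging each complex with an auxiliary acyclic complex that absorbs the chosen homology basis, and then combine the acyclic result with the elementary multiplicativity of torsion under such stabilizations.

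The main obstacle is the interaction of this basis bookkeeping with the non-commutativity of $\K$: all determinants live in $\K_{ab}^{\times}$, so I must check that the relevant base-change matrices are genuinely block triangular (rather than triangular only after a reordering of rows that the definition of $\det$ does not permit), and that the Dieudonn\'e determinant of a block-triangular matrix equals the product of the determinants of its diagonal blocks. The latter does hold over a skew field, but it must be applied respecting the left-multiplication convention used in the definition of $\det$ by elementary row operations. The hypothesis $[c_i' c_i'' / c_i] = 1$ is exactly what guarantees that the normalization by the preferred bases contributes trivially, so that no spurious determinant factors survive in the final alternating product.
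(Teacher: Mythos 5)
The first thing to say is that the paper contains no proof of this lemma: it is imported verbatim from Milnor's \emph{Whitehead torsion} (\cite[Theorem 3.\ 1]{Mi}), the only change being that the coefficients form a skew field $\K$, so that $K_1$ is computed by the Dieudonn\'e determinant in $\K_{ab}^{\times}$. The only meaningful comparison is therefore with Milnor's own argument, and your plan is essentially that argument: filter each complex by cycles and boundaries, settle the acyclic case by a block-triangularity computation, then account for homology via the long exact sequence. Your acyclic case is complete and correct: when all three complexes are acyclic the sequences $0 \to B_i' \to B_i \to B_i'' \to 0$ are exact, compatible choices of the $b_i$ make the base-change matrices block triangular, the Dieudonn\'e determinant of a block-triangular matrix over a skew field is the product of the determinants of the diagonal blocks in $\K_{ab}^{\times}$ (so the row-operation convention you worry about causes no trouble after abelianization), and the hypothesis $[c_i' c_i'' / c_i] = 1$ exactly kills the normalization term. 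Note also that over $\K$ you are in a strictly easier situation than Milnor, since all the modules $Z_i, B_i, H_i$ are automatically free and every short exact sequence splits.

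The weak point is the general case, and you should be aware that the sentence asserting that the bookkeeping ``produces exactly the additional factor $\tau(\mathcal{H}_*, \boldsymbol{d})$'' is the entire content of the theorem rather than a step toward it. To execute it one must choose, for each $i$, splittings of $0 \to B_i \to Z_i \to H_i \to 0$ and lifts through $H_i' \to H_i \to H_i''$ and the connecting homomorphism $H_i'' \to H_{i-1}'$, all compatibly, and then verify degree by degree that the resulting base change from $c_i' c_i''$ is block triangular with the connecting-homomorphism blocks assembling into $\tau(\mathcal{H}_*, \boldsymbol{d})$; this is precisely the several pages of bookkeeping in Milnor's paper. Your alternative reduction is viable over a skew field and arguably cleaner, but as phrased it is off: the auxiliary object one adjoins is the homology $H_*$ viewed as a complex with zero differentials, mapped to $C_*$ by chosen representative cycles, and this complex is \emph{not} acyclic; it is the algebraic mapping cone of that chain map which is acyclic, and whose torsion recovers $\tau(C_*, \boldsymbol{c}, \boldsymbol{h})$ up to sign. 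With either route carried out in full, your proof would be correct and would coincide with the proof the paper delegates to \cite{Mi}; as written, the hard half is a placeholder.
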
 

In the following when we write $C_*(\widetilde{X}, \widetilde{Y})$ for a connected finite CW-pair $(X, Y)$, $\widetilde{X}$, $\widetilde{Y}$ stand for the universal cover of $X$ and the pullback of $Y$ by the universal covering map $\widetilde{X} \to X$ respectively.
For a ring homomorphism $\varphi \colon \Z[\pi_1 X] \to \K$, we define the twisted homology group associated to $\varphi$ as follows:
\[ H_i^{\varphi}(X, Y; \K) := H_i(C_*(\widetilde{X}, \widetilde{Y}) \otimes_{\Z[\pi_1 X]} \K). \]

\begin{defn}
If $H_*^{\varphi}(X, Y; \K) = 0$, then we define the \textit{Reidemeister torsion} $\tau_{\varphi}(X, Y)$ associated to $\varphi$ as follows.
We choose a lift $\tilde{e}$ in $\widetilde{X}$ for each cell $e \subset X \setminus Y$. Then
\[ \tau_{\varphi}(X, Y) := [ \tau(C_*(\widetilde{X}, \widetilde{Y}) \otimes_{\Z[\pi_1 X]} \K, \langle \tilde{e} \otimes 1 \rangle_e)] \in \K_{ab}^{\times} / \pm \varphi(\pi_1 X). \]
\end{defn}

We can check that $\tau_{\varphi}(X, Y)$ does not depend on the choice of $\tilde{e}$.
It is known that Reidemeister torsion is a simple homotopy invariant of a finite CW-pair.

\subsection{Torsion of homology cylinders of higher-order}
Next we define non-commutative torsion invariants of homology cylinders of higher-order.
Torsion invariants of homology cylinders were first studied by Sakasai \cite{S1, S2}.

A group $G$ is called \textit{poly-torsion-free-abelian (PTFA)} if there exists a filtration
\[ 1 = G_0 \triangleleft G_1 \triangleleft \dots \triangleleft G_{n-1} \triangleleft G_n = G \]
such that $G_i / G_{i-1}$ is torsion free abelian.

\begin{prop}[\cite{Pa}]
If $G$ is a PTFA group, then $\Q[G]$ is a right (and left) Ore domain; namely $\Q[G]$ embeds in its classical right ring of quotients $\Q(G) := \Q[G](\Q[G] \setminus 0)^{-1}$.
\end{prop}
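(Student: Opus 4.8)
The plan is to prove the equivalent statement that $\Q[G]$ is a right Ore domain, i.e. that it has no zero divisors and that for all $a, b \in \Q[G]$ with $b \neq 0$ there exist $a', b' \in \Q[G]$, $b' \neq 0$, with $a b' = b a'$; the classical right ring of quotients then exists by Ore's theorem. I would argue by induction on the length $n$ of a PTFA filtration $1 = G_0 \triangleleft \dots \triangleleft G_n = G$. When $n \leq 1$ the group $G$ is torsion-free abelian, hence a directed union of subgroups isomorphic to $\Z^k$, so $\Q[G]$ is a directed union of Laurent polynomial rings $\Q[\Z^k]$, hence a commutative domain, and a commutative domain is trivially Ore.

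For the inductive step I set $N := G_{n-1} \triangleleft G$, a PTFA group with a filtration of length $n-1$, so by the inductive hypothesis $\Q[N]$ is a right Ore domain; write $K := \Q(N)$ for its skew field of fractions. Since $N$ is normal, $\Q[G]$ is the crossed product $\Q[N] * (G/N)$: choosing coset representatives $\{\bar{x}\}_{x \in G/N}$ one has $\Q[G] = \bigoplus_x \Q[N]\bar{x}$, with multiplication determined by the conjugation action of $G$ on $N$ and a factor set. This action extends to automorphisms of $K$, so one can form the crossed product $S := K * (G/N)$, into which $\Q[G]$ embeds.

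The heart of the argument is the building block that $S = K * (G/N)$, a crossed product of a skew field with a torsion-free abelian group, is a right Ore domain. I would first reduce to the finitely generated case: $G/N$ is the directed union of its finitely generated subgroups $A \cong \Z^{k}$, so $S = \varinjlim_A K * A$. Each $K * \Z^k$ is an iterated twisted Laurent extension of the skew field $K$; by a leading-term argument it is a domain, and by the Hilbert-basis-style theorem for Laurent/Ore extensions it is right Noetherian. A right Noetherian domain is automatically right Ore: if $a R \cap b R = 0$ for nonzero $a, b$, then a minimal-relation argument shows the sum $\sum_i a^i b R$ is direct, so $bR \subsetneq bR + abR \subsetneq \dots$ is a strictly ascending chain of right ideals, contradicting the ascending chain condition. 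Finally a directed union of right Ore domains is again a right Ore domain, since any two elements and a witnessing Ore relation already live in one of the terms.

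It then remains to descend from $S$ to $\Q[G]$. Here I use that $\Sigma := \Q[N] \setminus \{0\}$ is a right Ore set in $\Q[G]$ --- which follows from the Ore property of $\Q[N]$ together with normality of $N$, letting one move denominators across the $\bar{x}$ --- and that localizing the crossed product at this conjugation-invariant set returns exactly $\Q[G]\Sigma^{-1} = K * (G/N) = S$. Given $a, b \in \Q[G]$ with $b \neq 0$, I view them in the Ore domain $S$, apply the Ore condition there, and clear the resulting $\Sigma$-denominators to land back in $\Q[G]$, producing the required $a', b'$; the same computation identifies $\Q(G)$ with the fraction field of $S$. I expect the main obstacle to be precisely this crossed-product bookkeeping --- verifying that $\Sigma$ is an equivariant Ore set and that localization commutes with the crossed-product construction --- rather than the ring-theoretic inputs (domain, Noetherian, and Ore for twisted Laurent rings, and the direct-limit step), which are standard in the theory of (crossed) group rings (cf.\ \cite{Pa}).
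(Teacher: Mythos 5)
Your proposal is correct, but there is nothing in the paper to compare it against: the paper states this proposition as a quotation from Passman \cite{Pa} and gives no proof at all. What you have written is essentially a reconstruction of the standard argument underlying the cited reference: induction along the PTFA filtration, the crossed-product decomposition $\Q[G] = \Q[N] * (G/N)$ with $N = G_{n-1}$, extension of the conjugation automorphisms to $K = \Q(N)$, the identification of $K * \Z^k$ with an iterated skew Laurent extension (a crossed product over $\Z$ always untwists, since the powers of the invertible element $\bar{1}$ give a new transversal), hence a right Noetherian domain, hence right Ore by the ascending-chain argument you sketch, followed by the direct limit over finitely generated subgroups of $G/N$ and descent to $\Q[G]$. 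All of these steps are sound, including the Noetherian-implies-Ore chain argument. The step you flag as the main bookkeeping --- that $\Sigma = \Q[N] \setminus \{0\}$ is a right Ore set in $\Q[G]$ and that $K*(G/N)$ is the localization $\Q[G]\Sigma^{-1}$ --- does go through by exactly the mechanism you name: for $s = \sum_x p_x q_x^{-1} \bar{x}$, normality of $N$ gives $q_x^{-1}\bar{x} = \bar{x}\,(\bar{x}^{-1}q_x\bar{x})^{-1}$ with $\bar{x}^{-1}q_x\bar{x} \in \Q[N]$, and a single right denominator serving all the finitely many coefficients exists because in a right Ore domain any finite intersection of nonzero right ideals is nonzero (an easy induction from the Ore condition; this is the one ingredient you left implicit and should record). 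Two minor points: the proposition also asserts the left Ore condition, which follows by the symmetric argument or by applying the involution $g \mapsto g^{-1}$; and the fact that a directed union of (Ore) domains is an (Ore) domain is used twice, in the base case and at the limit step, and deserves one explicit sentence.
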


See \cite[Proposition 2.\ 10]{COT} for a proof of the following lemma.

\begin{lem} \label{lem_V}
For a CW-pair $(X, Y)$ and a homomorphism $\rho \colon \pi_1 X \to \Gamma$ to a PTFA group $\Gamma,$ if $H_*(X, Y; \Q) = 0,$ then $H_*^{\rho}(X, Y; \Q(\Gamma)) = 0.$ 
\end{lem}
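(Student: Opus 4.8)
\medskip

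\noindent\emph{Proof proposal.} The plan is to reduce the statement to a rank count for a finite complex of free $\Q[\Gamma]$-modules and then to feed in the one genuinely non-commutative input: over a PTFA group a square matrix that becomes invertible under the augmentation stays invertible over the Ore field of fractions. Concretely, I would set $G := \pi_1 X$ and let $C_* = C_*(\widetilde{X}, \widetilde{Y})$, a complex of free $\Z[G]$-modules with basis the lifts of the cells of $X \setminus Y$; we may assume it is finitely generated in each degree. Put $D_* := C_* \otimes_{\Z[G]} \Q[\Gamma]$, a finite complex of free right $\Q[\Gamma]$-modules, where $\Q[\Gamma]$ is regarded as a $\Z[G]$-module via $\rho$. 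Composing $\rho$ with the augmentation $\Q[\Gamma] \to \Q$ kills the $G$-action, so $D_* \otimes_{\Q[\Gamma]} \Q \cong C_*(X, Y; \Q)$, which is acyclic by hypothesis. Since $\Q(\Gamma)$ is a skew field by Passman's proposition, it suffices to prove that $D_* \otimes_{\Q[\Gamma]} \Q(\Gamma)$ is acyclic, which I would do by comparing the ranks of the boundary maps over $\Q$ and over $\Q(\Gamma)$.

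The key step is the following \emph{engine}: if $A$ is a square matrix over $\Q[\Gamma]$ whose image $\bar{A}$ under the augmentation is invertible over $\Q$, then $A$ is invertible over $\Q(\Gamma)$. I would deduce this from Strebel's theorem that a PTFA group lies in his class $D(\Q)$, so that a map of free $\Q[\Gamma]$-modules which becomes injective after applying $- \otimes_{\Q[\Gamma]} \Q$ is already injective over $\Q[\Gamma]$, together with flatness of the Ore localization $\Q(\Gamma)$ over $\Q[\Gamma]$, which upgrades the injectivity of the square matrix $A$ to invertibility over $\Q(\Gamma)$. Applying the engine to a maximal nonsingular minor of the augmented boundary map then shows $r_i := \rank_{\Q(\Gamma)}(\partial_i \otimes \Q(\Gamma)) \geq c_i := \rank_{\Q}(\partial_i \otimes \Q)$ for every $i$.

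Finally I would run a rank squeeze. Writing $n_i$ for the number of $i$-cells of $X \setminus Y$, which is the $\Q[\Gamma]$-rank of $D_i$, acyclicity of $C_*(X, Y; \Q)$ gives $n_i = c_i + c_{i+1}$, while the inclusion $\im \partial_{i+1} \subseteq \ker \partial_i$ over $\Q(\Gamma)$ gives $n_i \geq r_i + r_{i+1}$. Combining these, $n_i \geq r_i + r_{i+1} \geq c_i + c_{i+1} = n_i$, so equality holds throughout; hence $\dim_{\Q(\Gamma)} H_i^{\rho}(X, Y; \Q(\Gamma)) = n_i - r_i - r_{i+1} = 0$ for all $i$, as desired. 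The main obstacle is the engine: the rest is bookkeeping with dimensions over (skew) fields, whereas the passage from injectivity modulo the augmentation to invertibility over $\Q(\Gamma)$ is exactly where the poly-torsion-free-abelian hypothesis is indispensable.
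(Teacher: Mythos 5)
Your proof is correct (modulo the harmless standing assumption, consistent with the paper's setting, that the pair is a finite CW-pair so that the chain groups are finitely generated), and it is essentially the paper's own approach: the paper gives no argument of its own but defers to \cite[Proposition 2.10]{COT}. That cited proof rests on exactly the two ingredients you isolate --- Strebel's theorem that PTFA groups lie in his class $D(\Q)$, and flatness of the Ore localization $\Q(\Gamma)$ over $\Q[\Gamma]$ --- so your invertible-minor engine plus rank squeeze is the same argument, merely repackaged as an explicit dimension count.
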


For $(M, i_{\pm}) \in \mathcal{C}_{g, n}^{(m)},$ we denote by $\rho_m$ the pullback $\pi_1 M \to \Gamma_m$ of $(i_+)_*^{-1} \colon \pi_1 M / (\pi M)^{(m+1)} \to \Gamma_m.$
It is well-known that $\Gamma_m$ is torsion-free for all $m,$ and hence $\Gamma_m$ is PTFA for all $m.$ 
It follows from the above lemma and Remark \ref{rem_H} that $H_*^{\rho_m}(M, i_+(\Sigma); \Q(\Gamma_m)) = 0.$

\begin{defn}
We define a map $\tau_m \colon \mathcal{C}_{g, n}^{(m)} \to \Q(\Gamma_m)_{ab}^{\times} / \pm \Gamma_m$ by $\tau_m(M, i_{\pm}) := \tau_{\rho_m}(M, i_+(\Sigma)).$
\end{defn}

It can be easily computed that for all $\psi \in \mathcal{M}_{g, n}$ and all $m,$ $\tau_m(M(\psi)) = 1.$

\begin{lem} \label{lem_N}
For $(M, i_{\pm}) \in \mathcal{C}_{g, n}^{(m+1)}, H_*^{\rho_m}(M, i_+(\Sigma); \Z[\Gamma_m]) = 0.$
\end{lem}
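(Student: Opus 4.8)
The plan is to reinterpret the twisted homology as the ordinary homology of a covering pair and then to show it vanishes degree by degree. Since $\rho_m \circ (i_+)_*$ is the projection $\pi \to \pi/\pi^{(m+1)} = \Gamma_m$, the homomorphism $\rho_m$ is surjective with kernel $(\pi_1 M)^{(m+1)}$; let $p \colon M_m \to M$ be the regular $\Gamma_m$-cover it determines and set $\Sigma_m := p^{-1}(i_+(\Sigma))$. Then $C_*(\widetilde M, \widetilde{i_+(\Sigma)}) \otimes_{\Z[\pi_1 M]} \Z[\Gamma_m] = C_*(M_m, \Sigma_m)$ as complexes of $\Z[\Gamma_m]$-modules, so it suffices to prove $H_i(M_m, \Sigma_m; \Z) = 0$ for all $i$. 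As $M$ is a finite CW complex of dimension $3$, only $i = 0, 1, 2, 3$ occur.

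For $i = 0$ both $M_m$ and $\Sigma_m$ are connected (the latter is the cover of $i_+(\Sigma)$ corresponding to $\pi^{(m+1)}$), and the inclusion induces an isomorphism on $H_0$, so $H_0(M_m, \Sigma_m) = 0$. For $i = 1$ I would invoke the hypothesis that $(M, i_\pm)$ has order $m+1$. Here $H_1(M_m) = (\pi_1 M)^{(m+1)}/(\pi_1 M)^{(m+2)}$ and $H_1(\Sigma_m) = \pi^{(m+1)}/\pi^{(m+2)}$ as $\Z[\Gamma_m]$-modules, and $(i_+)_*$ carries one to the other. Placing the derived-series short exact sequences of $\pi_1 M/(\pi_1 M)^{(m+2)}$ and of $\Gamma_{m+1}$ into a commutative ladder whose middle and right vertical maps are the isomorphisms $(i_+)_*$ supplied by orders $m+1$ and $m$, the five lemma shows the left vertical map $(i_+)_* \colon \pi^{(m+1)}/\pi^{(m+2)} \to (\pi_1 M)^{(m+1)}/(\pi_1 M)^{(m+2)}$ is a $\Z[\Gamma_m]$-isomorphism. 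Thus $H_1(\Sigma_m) \to H_1(M_m)$ is an isomorphism, and the long exact sequence of the pair gives $H_1(M_m, \Sigma_m) = 0$. The identical argument applied to $i_-$ shows that $H_0(M, i_-(\Sigma); \Z[\Gamma_m]) = H_1(M, i_-(\Sigma); \Z[\Gamma_m]) = 0$ as well.

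For $i = 2, 3$ I would pass to duality. Since $M$ is a compact oriented $3$-manifold with $\partial M = i_+(\Sigma) \cup i_-(\Sigma)$ meeting along $i_+(\partial \Sigma)$, Poincar\'e--Lefschetz duality with $\Z[\Gamma_m]$-coefficients gives $H_i(M, i_+(\Sigma); \Z[\Gamma_m]) \cong \overline{H^{3-i}(M, i_-(\Sigma); \Z[\Gamma_m])}$, where the bar denotes the conjugate module and is irrelevant for vanishing. The universal coefficient spectral sequence $\operatorname{Ext}^p_{\Z[\Gamma_m]}(H_q(M, i_-(\Sigma); \Z[\Gamma_m]), \Z[\Gamma_m]) \Rightarrow H^{p+q}(M, i_-(\Sigma); \Z[\Gamma_m])$ then expresses $H^0$ and $H^1$ of $(M, i_-(\Sigma))$ solely in terms of $H_0(M, i_-(\Sigma); \Z[\Gamma_m])$ and $H_1(M, i_-(\Sigma); \Z[\Gamma_m])$, both of which vanish by the previous paragraph; hence $H^0 = H^1 = 0$ and therefore $H_3(M, i_+(\Sigma); \Z[\Gamma_m]) = H_2(M, i_+(\Sigma); \Z[\Gamma_m]) = 0$.

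The main obstacle I anticipate is the bookkeeping in this last step: making noncommutative Poincar\'e--Lefschetz duality and the universal coefficient spectral sequence precise over the group ring $\Z[\Gamma_m]$ (including the involution and the orientability of the cover), together with the correct $\Gamma_m$-equivariant identification of the first homology of the covers with the derived quotients so that the five lemma applies. Degree $3$ can alternatively be handled more cheaply: by Lemma \ref{lem_V} the complex $C_*(M_m, \Sigma_m) \otimes_{\Z[\Gamma_m]} \Q(\Gamma_m)$ is acyclic, so $H_3(M_m, \Sigma_m) = \ker \partial_3$ is at once a $\Q(\Gamma_m)$-torsion module and a submodule of a free $\Z[\Gamma_m]$-module, hence zero; but degree $2$ genuinely seems to require the duality input above.
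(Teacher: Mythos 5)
Your proof is correct, and its core step is exactly the paper's: the paper likewise deduces from the order-$(m+1)$ hypothesis that $(i_+)_*\colon \pi^{(m+1)}/\pi^{(m+2)} \to (\pi_1M)^{(m+1)}/(\pi_1M)^{(m+2)}$ is an isomorphism, hence that $H_1^{\rho_m}(i_+(\Sigma);\Z[\Gamma_m]) \to H_1^{\rho_m}(M;\Z[\Gamma_m])$ is an isomorphism, and then appeals to the long exact sequence of the pair. Where you genuinely diverge is in degrees $2$ and $3$: the long exact sequence by itself only kills the relative homology in degrees $0$ and $1$, and the paper supplies the remaining degrees not by duality but by the relative handle structure made explicit in the proof of Proposition \ref{prop_V}: $C_*^{\rho_m}(M, i_+(\Sigma);\Z[\Gamma_m])$ is simple homotopy equivalent to a two-term complex $0 \to C_2 \xrightarrow{\partial} C_1 \to 0$ of free modules of equal rank, so degrees $0$ and $3$ vanish for free, and once $H_1 = 0$ makes $\partial$ surjective, tensoring with $\Q(\Gamma_m)$ (flatness of the Ore localization) forces $\partial$ to be injective, killing $H_2$. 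You instead rerun the low-degree argument for $i_-$ and convert degrees $2, 3$ into degrees $0, 1$ via noncommutative Poincar\'e--Lefschetz duality and the universal coefficient spectral sequence; this is the standard Cochran--Orr--Teichner-style route, and it is valid, at the cost of invoking the duality/UCSS machinery that the paper never needs here, while buying independence from any choice of handle decomposition. Note finally that your cheap degree-$3$ argument (a $\Q(\Gamma_m)$-torsion submodule of a free module vanishes, by Lemma \ref{lem_V} and flatness) would dispose of degree $2$ as well if you first passed to the paper's two-term model, since there $H_2 = \ker\partial$ is again a submodule of a free module; so the duality input is only ``genuinely required'' within your cover-theoretic framework, not absolutely.
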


\begin{proof}
Since $(i_+)* \colon \pi^{(m+1)} / \pi^{(m+2)} \to (\pi_1M)^{(m+1)} / (\pi_1M)^{(m+2)}$ is an isomorphism, $H_1^{\rho_k}(i_+(\Sigma); \Z[\Gamma_m]) \to H_1^{\rho_k}(M; \Z[\Gamma_m])$ is also an isomorphism.
Now the lemma follows from the long exact homology sequence for $(M, i_+(\Sigma))$
\end{proof}

We denote by $\wh(\Gamma) := K_1(\Z[\Gamma]) / \pm \Gamma$ the Whitehead group of a group $\Gamma.$
The Dieudonn\'{e} determinant induces a homomorphism $\wh(\Gamma) \to \Q(\Gamma)_{ab}^{\times} / \pm \Gamma.$

\begin{prop} \label{prop_V}
For all $(M, i_{\pm}) \in \mathcal{C}_{g, n}^{(m)}$ and all $k < m,$ $\tau_k(M, i_{\pm})$ is in the image of $\wh(\Gamma_k) \to \Q(\Gamma_k)_{ab}^{\times} / \pm \Gamma_k.$
Furthermore, for all $(M, i_{\pm}) \in \mathcal{C}_{g, n}^{(0)},$ $(M, i_{\pm}) \in \mathcal{C}_{g, n}^{(1)}$ if and only if $\tau_0(M, i_{\pm}) = 1.$
\end{prop}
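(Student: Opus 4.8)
The plan is to treat the two assertions separately: the general Whitehead-image statement follows from integral acyclicity together with naturality of torsion, while the sharper equivalence for $m=1$ is obtained from a \emph{balanced} (square) presentation over $\Z[\Gamma_0]$.

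For the first assertion, observe that $k<m$ gives $(M,i_\pm)\in\mathcal{C}_{g,n}^{(k+1)}$, so Lemma \ref{lem_N} (with $m$ replaced by $k$) yields $H_*^{\rho_k}(M,i_+(\Sigma);\Z[\Gamma_k])=0$. Hence the based free $\Z[\Gamma_k]$-complex $C_*(\widetilde{M},\widetilde{i_+(\Sigma)})\otimes_{\Z[\pi_1 M]}\Z[\Gamma_k]$ is acyclic and carries a Whitehead torsion in $\wh(\Gamma_k)$. I would then invoke naturality of torsion under the change of coefficients $\Z[\Gamma_k]\hookrightarrow\Q(\Gamma_k)$: this class is sent to $\tau_k(M,i_\pm)=\tau_{\rho_k}(M,i_+(\Sigma))$ by the homomorphism $\wh(\Gamma_k)\to\Q(\Gamma_k)_{ab}^\times/\pm\Gamma_k$ induced by the Dieudonn\'e determinant. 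This is exactly the claim that $\tau_k(M,i_\pm)$ lies in the image of $\wh(\Gamma_k)$.

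For the equivalence, note first that $\Gamma_0=\pi/\pi^{(1)}=H_1(\Sigma;\Z)$ is free abelian, so that $\Q(\Gamma_0)$ is a field and the Dieudonn\'e determinant is the ordinary one. I would set up two reductions. (a) Being of order $1$ is equivalent to $H_*^{\rho_0}(M,i_+(\Sigma);\Z[\Gamma_0])=0$: the implication $(\Rightarrow)$ is Lemma \ref{lem_N} with $m=0$, while $(\Leftarrow)$ follows from the long exact sequence of the pair, which forces the map of Alexander modules $(i_+)_*\colon\pi^{(1)}/\pi^{(2)}\to(\pi_1 M)^{(1)}/(\pi_1 M)^{(2)}$ to be an isomorphism, whence the five lemma upgrades the order-$0$ isomorphism $\Gamma_0\to\pi_1 M/(\pi_1 M)^{(1)}$ to an order-$1$ isomorphism; the condition for $i_-$ is obtained symmetrically, using Poincar\'e--Lefschetz duality to transfer the vanishing to the pair $(M,i_-(\Sigma))$. (b) Since $H_*(M,i_+(\Sigma);\Q)=0$ by Remark \ref{rem_H} and $\dim M=3$ with $i_\pm(\Sigma)$ connected, a relative handle decomposition can be arranged with no $0$- or $3$-handles and with equally many $1$- and $2$-handles, so that the complex above is chain homotopy equivalent to a two-term complex $\Z[\Gamma_0]^k\xrightarrow{A}\Z[\Gamma_0]^k$ concentrated in degrees $2,1$.

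With (b) in hand, $\tau_0(M,i_\pm)$ equals the class of $\det A$ in $\Q(\Gamma_0)^\times/\pm\Gamma_0$ (up to the immaterial sign of the exponent). As $A$ has entries in $\Z[\Gamma_0]$ and the units of $\Z[\Gamma_0]$ are exactly the trivial units $\pm\Gamma_0$, the condition $\tau_0=1$ says precisely that $\det A\in(\Z[\Gamma_0])^\times$; over the commutative Noetherian domain $\Z[\Gamma_0]$ this is equivalent to $A\in GL_k(\Z[\Gamma_0])$, hence to $\ker A=\coker A=0$, i.e.\ to $H_*^{\rho_0}(M,i_+(\Sigma);\Z[\Gamma_0])=0$. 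Combining with reduction (a) gives $\tau_0=1$ if and only if $(M,i_\pm)$ is of order $1$. I expect the main obstacle to be reduction (b): extracting a genuinely square presentation over $\Z[\Gamma_0]$ is what eliminates the pseudo-null phenomenon whereby a nonzero torsion module over the non-principal ring $\Z[\Gamma_0]$ could still have trivial order; without it, $\tau_0=1$ would only control the rational Alexander module up to pseudo-null summands and could not recover the integral order-$1$ condition. A secondary point requiring care is the two-sidedness of the order-$1$ condition, i.e.\ handling $i_-$ as well as $i_+$, for which the duality argument in (a) is needed.
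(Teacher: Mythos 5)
Your proof is correct, and its skeleton is the one the paper uses: Lemma \ref{lem_N} supplies acyclicity over $\Z[\Gamma_k]$ for the first assertion, and for the equivalence a relative handle decomposition with equally many $1$- and $2$-handles produces a square matrix $A$ over $\Z[\Gamma_0]$, so that $\tau_0=1$ forces $\det A\in\pm\Gamma_0$, hence $A\in GL_k(\Z[\Gamma_0])$ and $H_*^{\rho_0}(M,i_+(\Sigma);\Z[\Gamma_0])=0$, after which Poincar\'e duality, the long exact sequences of the pairs $(M,i_\pm(\Sigma))$ and the five lemma upgrade the order-$0$ isomorphisms to order-$1$ isomorphisms --- this is precisely the paper's sufficiency argument. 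The one place you genuinely deviate is the necessity direction: the paper deduces $\tau_0=1$ from the first assertion together with the Bass--Heller--Swan theorem of \cite{BHS} that $\wh(\Gamma)=1$ for free abelian $\Gamma$, whereas you argue at the level of determinants, using that the units of the Laurent ring $\Z[\Gamma_0]$ are exactly the trivial units $\pm\Gamma_0$. Your input is strictly weaker and elementary (a leading-term argument with respect to an ordering of $\Gamma_0$ proves trivial units, with no $K$-theory), and it suffices because only the image of $\wh(\Gamma_0)$ under the Dieudonn\'e determinant, namely $(\Z[\Gamma_0])^\times/\pm\Gamma_0$, is relevant; what the paper's citation buys is a one-line deduction phrased directly in terms of the first assertion. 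Likewise, your treatment of the first assertion (Whitehead torsion of the acyclic based $\Z[\Gamma_k]$-complex plus naturality under $\Z[\Gamma_k]\hookrightarrow\Q(\Gamma_k)$) is an abstract repackaging of the paper's explicit computation that $\tau_k=[\det\partial]$ with $\partial$ an isomorphism over $\Z[\Gamma_k]$.

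One wording to fix: in your step (b) the two-term complex should be \emph{simple} homotopy equivalent to $C_*(\widetilde{M},\widetilde{i_+(\Sigma)})\otimes\Z[\Gamma_0]$ --- which the handle decomposition does provide --- not merely chain homotopy equivalent, since Reidemeister torsion is only a simple homotopy invariant. For $\Gamma_0$ the discrepancy would in any case be invisible by the same trivial-units fact, but as stated the identification $\tau_0=[\det A]$ needs the simple equivalence.
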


\begin{proof}
First we suppose that $(M, i_{\pm}) \in \mathcal{C}_{g, n}^{(m)}.$
Since $M$ can be obtained by attaching the same number of $1$- and $2$-handles to $\Sigma \times [0, 1],$ $C_*^{\rho_k}(M, i_+(\Sigma); \Z[\Gamma_k])$ is simple homotopy equivalent to a chain complex
\[ 0 \to C_2 \xrightarrow{\partial} C_1 \to 0 \]
with $\rank C_1 = \rank C_2$ for $k \leq m.$
Hence $\tau_k(M, i_{\pm}) = [\det \partial]$ for $k \leq m.$
Since $H_1^{\rho_k}(M, i_+(\Sigma); \Z[\Gamma_k]) = 0$ for $k < m$ by Lemma \ref{lem_N}, $\partial$ is a surjection, and so an isomorphism for $k < m$, which proves the first statement.

Now the necessary condition in the second statement follows from the result by \cite{BHS} that for any free abelian group $\Gamma,$ $\wh(\Gamma) = 1.$

Next we suppose that $\tau_0(M, i_{\pm}) = 1$ for $(M, i_{\pm}) \in \mathcal{C}_{g, n}^{(0)}.$
Since $\det \partial \in \pm H_1(\Sigma),$ $\partial$ is an isomorphism for $k = 0.$
Hence $H_*^{\rho_0}(M, i_+(\Sigma); \Z[H_1(\Sigma)]) = 0.$
From the Poincar\'{e} duality and the universal coefficient theorem, we have $H_*^{\rho_0}(M, i_-(\Sigma); \Z[H_1(\Sigma)]) = 0.$
From the long exact homology sequence for $(M, i_{\pm}(\Sigma)),$ $H_1^{\rho_0}(i_{\pm}(\Sigma); \Z[H_1(\Sigma)]) \to H_1^{\rho_0}(M; \Z[H_1(\Sigma)])$ are isomorphisms, and so $(i_{\pm})_* \colon \pi^{(1)} / \pi^{(2)} \to (\pi_1 M)^{(1)} / (\pi_1 M)^{(2)}$ are also isomorphisms.
Therefore $(i_{\pm})_* \colon \pi / \pi^{(2)} \to \pi_1 M / (\pi_1 M)^{(2)}$ are isomorphisms, which gives the sufficient condition in the second statement.
\end{proof}

\begin{rem}
Though it is a well-known conjecture that for any finitely generated torsion-free group $\Gamma,$ $\wh(\Gamma) = 1,$ to author's knowledge there seems to be no appropriate reference on whether $\wh(\Gamma_m) = 1$ for $m > 0.$    
\end{rem}

Each $\varphi \in \out^*(\Gamma_m)$ induces an automorphism of $\Q(\Gamma_m)_{ab}^{\times} / \pm \Gamma_m,$ which we also denote by $\varphi.$
The following proposition is an extension of \cite[Proposition 3.\ 5]{CFK}. 
See also \cite[Proposition 6.\ 6]{S2} for a related result.

\begin{prop} \label{prop_P}
For $(M, i_{\pm}), (N, j_{\pm}) \in \mathcal{C}_{g, n}^{(m)},$
\[ \tau_m((M, i_{\pm}) \cdot (N, j_{\pm})) = \tau_m(M, i_{\pm}) \cdot \varphi_m(M, i_{\pm})(\tau_m(N, j_{\pm})). \]
\end{prop}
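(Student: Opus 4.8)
The plan is to realize the product as a gluing of CW-complexes and then apply the multiplicativity of torsion from Lemma~\ref{lem_M}. Write $P := M \cup_{i_- \circ (j_+)^{-1}} N$ for the underlying manifold of the product, so that by the van Kampen theorem (exactly as in the proof of Lemma~\ref{lem_P}) $\pi_1 P = \pi_1 M *_\pi \pi_1 N$, and the reference homomorphism $\rho_m^P \colon \pi_1 P \to \Gamma_m$ is the pullback of $((i_+)_*)^{-1}$. Since the positive marking of the product is $i_+$, we have $\tau_m((M,i_\pm)\cdot(N,j_\pm)) = \tau_{\rho_m^P}(P, i_+(\Sigma))$. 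Regarding $M$ as a subspace of $P$ containing $i_+(\Sigma)$, I would consider the short exact sequence of chain complexes of right $\Q(\Gamma_m)$-vector spaces (with all covers induced from the universal cover $\widetilde P$ and all tensor products taken via $\rho_m^P$)
\[ 0 \to C_*(\widetilde M, i_+(\widetilde\Sigma)) \otimes_{\Z[\pi_1 P]} \Q(\Gamma_m) \to C_*(\widetilde P, i_+(\widetilde\Sigma)) \otimes_{\Z[\pi_1 P]} \Q(\Gamma_m) \to C_*(\widetilde P, \widetilde M) \otimes_{\Z[\pi_1 P]} \Q(\Gamma_m) \to 0. \]
Each of the three complexes is acyclic: the middle one because $P \in \mathcal{C}_{g,n}^{(m)}$ by Lemma~\ref{lem_P}, together with Remark~\ref{rem_H} and Lemma~\ref{lem_V}; the left one because it computes $\tau_m(M,i_\pm)$; and the right one then follows from the long exact homology sequence.

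The two outer terms are to be identified with the torsions of the factors. For the subspace $M$, the positive marking of the product agrees with that of $M$, so $\rho_m^P|_{\pi_1 M} = \rho_m^M$ and hence the left term has torsion $\tau_m(M,i_\pm)$. For the relative term, excision identifies $C_*(\widetilde P, \widetilde M)$ with the chains of $N$ relative to its gluing surface $j_+(\Sigma) = i_-(\Sigma)$, but now with the coefficient system $\rho_m^P|_{\pi_1 N}$. The crucial point is to compare this restriction with $\rho_m^N$. Because the gluing identifies $j_+(x)$ with $i_-(x)$, the two embeddings $j_+, i_- \colon \Sigma \to P$ coincide, so for a surface element the composite $\pi_1 N \hookrightarrow \pi_1 P \xrightarrow{((i_+)_*^P)^{-1}} \Gamma_m$ equals $(i_+)_*^{-1}(i_-)_*$, a representative of $\varphi_m(M,i_\pm)$. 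As $(j_+)_*$ induces an isomorphism $\Gamma_m \to \pi_1 N/(\pi_1 N)^{(m+1)}$, every class in the quotient is represented by a surface element, so this determines the full automorphism and gives $\rho_m^P|_{\pi_1 N} = \varphi_m(M,i_\pm) \circ \rho_m^N$. Functoriality of the Dieudonn\'e determinant under the induced ring automorphism of $\Q(\Gamma_m)$ then shows that the torsion of the relative term equals $\varphi_m(M,i_\pm)(\tau_m(N,j_\pm))$.

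Finally I would apply Lemma~\ref{lem_M}. Choosing cell lifts so that the cells of $P \setminus i_+(\Sigma)$ split as those of $M \setminus i_+(\Sigma)$ together with those of $N \setminus j_+(\Sigma)$ makes the base-change determinants $[c_i' c_i''/c_i]$ trivial, and acyclicity of all three complexes makes $\tau(\mathcal{H}_*) = 1$. Lemma~\ref{lem_M} then yields the asserted identity in $\Q(\Gamma_m)_{ab}^{\times}/\pm\Gamma_m$, where the sign and $\Gamma_m$ indeterminacies are absorbed. The main obstacle is the middle step: carefully tracking how the coefficient system on the $N$-part is twisted by $\varphi_m(M,i_\pm)$ — in particular verifying that restricting $\rho_m^P$ to $\pi_1 N$ is exactly the $\varphi_m(M,i_\pm)$-twist of $\rho_m^N$, and checking that this is consistent at the level of $\Q(\Gamma_m)_{ab}^{\times}/\pm\Gamma_m$, where it is the outer class $\varphi_m(M,i_\pm) \in \out^*(\Gamma_m)$ (inner automorphisms acting trivially after quotienting by $\Gamma_m$) rather than an honest automorphism that acts.
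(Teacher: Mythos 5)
Your proposal is correct, and the underlying mechanism (multiplicativity of torsion via Lemma~\ref{lem_M} plus the observation that the coefficient system on the $N$-side is the $\varphi_m(M,i_\pm)$-twist of $\rho_m^N$, well-defined on $\Q(\Gamma_m)_{ab}^{\times}/\pm\Gamma_m$ because inner automorphisms die there) is exactly the paper's. But your decomposition is genuinely different. The paper does not use the sequence of the triple $i_+(\Sigma) \subset M \subset W$; instead it applies Lemma~\ref{lem_M} twice, first to the Mayer--Vietoris sequence
\[ 0 \to C_*(\widetilde{j_+(\Sigma)}) \to C_*(\widetilde{M}, \widetilde{i_+(\Sigma)}) \oplus C_*(\widetilde{N}) \to C_*(\widetilde{W}, \widetilde{i_+(\Sigma)}) \to 0 \]
and then to the pair sequence $0 \to C_*(\widetilde{j_+(\Sigma)}) \to C_*(\widetilde{N}) \to C_*(\widetilde{N}, \widetilde{j_+(\Sigma)}) \to 0$. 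Those sequences involve the \emph{non-acyclic} complexes of $j_+(\Sigma)$ and $N$, so the paper must choose compatible homology bases (using that $j_+(\Sigma) \hookrightarrow N$ induces an isomorphism on $\Q(\Gamma_m)$-homology, which makes the long-exact-sequence torsions trivial) and then divide the two resulting identities so that the torsions of $\Sigma$ and $N$ cancel. Your route trades this bookkeeping for a single application of Lemma~\ref{lem_M} in which all three complexes are acyclic, at the cost of the cellular excision identification $C_*(\widetilde{W}, \widetilde{M}) \cong C_*(\widetilde{N}, \widetilde{j_+(\Sigma)})$ (trivial at the level of cell bases, but it is where the twisted coefficient system enters, and you verify that identification carefully). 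So your argument is arguably leaner here; the paper's Mayer--Vietoris template has the side benefit of being reused essentially verbatim in the computation of $\tau_m(M(\gamma,K))$ in Proposition~\ref{prop_C2}, where the pieces are glued along tori and a triple-sequence shortcut is not available.
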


\begin{proof}
In all of the calculations below, we implicitly tensor the chain complexes with $\Q(\Gamma_m).$
We write $W := M \cup_{i_- \circ (j_+)^{-1}} N.$
We have the following short exact sequences:
\[ 0 \to C_*(\widetilde{j_+(\Sigma)}) \to C_*(\widetilde{M}, \widetilde{i_+(\Sigma)}) \oplus C_*(\widetilde{N}) \to C_*(\widetilde{W}, \widetilde{i_+(\Sigma)}) \to 0,  \]
\[ 0 \to C_*(\widetilde{j_+(\Sigma)}) \to C_*(\widetilde{N}) \to C_*(\widetilde{N}, \widetilde{j_+(\Sigma)}) \to 0,  \]
where we consider the homomorphisms
\begin{align*}
\rho &\colon \pi_1 W \to \pi_1 W / (\pi_1 W)^{(m+1)} \xrightarrow{(i_+)_*^{-1}} \Gamma_m, \\
\rho' &\colon \pi_1 N \to \pi_1 N / (\pi_1 N)^{(m+1)} \xrightarrow{\sim} \pi_1 W / (\pi_1 W)^{(m+1)} \xrightarrow{(i_+)_*^{-1}} \Gamma_m
\end{align*}
respectively.
It follows from the long exact homology sequence that the inclusion map $j_+(\Sigma) \to N$ induces an isomorphism $H_*^{\rho'}(j_+(\Sigma); \Q(\Gamma_m)) \to H_*^{\rho'}(N; \Q(\Gamma_m))$.
Let $\boldsymbol{c}, \boldsymbol{c}'$ be bases of $C_*(\widetilde{N}), C_*(\widetilde{j_+(\Sigma)})$ consisting of cells and let $\boldsymbol{h}, \boldsymbol{h}'$ be bases of $H_*^{\rho'}(N; \Q(\Gamma_m)), H_*^{\rho'}(j_+(\Sigma); \Q(\Gamma_m))$ such that $\boldsymbol{h}$ is the image of $\boldsymbol{h}'$ by the isomorphism.
By Lemma \ref{lem_M} we obtain the following equations:
\begin{align*}
\tau_{\rho}(M, i_+(\Sigma)) \cdot [\tau(C_*(\widetilde{N}), \boldsymbol{c}, \boldsymbol{h})] &= [\tau(C_*(\widetilde{j_+(\Sigma)}), \boldsymbol{c}', \boldsymbol{h}')] \cdot \tau_{\rho}(W, i_+(\Sigma)), \\ 
[\tau(C_*(\widetilde{N}), \boldsymbol{c}, \boldsymbol{h})] &= [\tau(C_*(\widetilde{j_+(\Sigma)}), \boldsymbol{c}', \boldsymbol{h}')] \cdot \tau_{\rho'}(N, j_+(\Sigma)). 
\end{align*}
Hence
\[ \tau_{\rho}(W, i_+(\Sigma)) = \tau_{\rho}(M, i_+(\Sigma)) \cdot \tau_{\rho'}(N, j_+(\Sigma)). \]
By the functoriality of Reidemeister torsion $\tau_{\rho'}(N, j_+(\Sigma)) = \varphi_m(M, i_{\pm})(\tau_m(N, j_{\pm})),$ which establishes the formula.
\end{proof}

\begin{cor} \label{cor_H1}
The map $\tau_m \rtimes \varphi_m \colon \mathcal{C}_{g, n}^{(m)} \to (\Q(\Gamma_m)_{ab}^{\times} / \pm \Gamma_m) \rtimes \out^*(\Gamma_m)$ is a homomorphism.
\end{cor}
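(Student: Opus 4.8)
The plan is to verify the homomorphism property coordinate by coordinate after unwinding the semidirect product structure. Writing $A := \Q(\Gamma_m)_{ab}^{\times} / \pm \Gamma_m$, the semidirect product $A \rtimes \out^*(\Gamma_m)$ is formed using the action of $\out^*(\Gamma_m)$ on $A$ by the induced automorphisms introduced just before Proposition \ref{prop_P}; its multiplication therefore reads $(a, \varphi) \cdot (b, \psi) = (a \cdot \varphi(b),\, \varphi \psi)$. Thus for $(M, i_{\pm}), (N, j_{\pm}) \in \mathcal{C}_{g, n}^{(m)}$ I would expand the product of the images,
\[ (\tau_m \rtimes \varphi_m)(M, i_{\pm}) \cdot (\tau_m \rtimes \varphi_m)(N, j_{\pm}) = \bigl( \tau_m(M, i_{\pm}) \cdot \varphi_m(M, i_{\pm})(\tau_m(N, j_{\pm})),\ \varphi_m(M, i_{\pm})\, \varphi_m(N, j_{\pm}) \bigr), \]
and compare it with $(\tau_m \rtimes \varphi_m)\bigl((M, i_{\pm}) \cdot (N, j_{\pm})\bigr)$.

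The equality of the first coordinates is precisely the content of Proposition \ref{prop_P}, namely the cocycle identity $\tau_m((M, i_{\pm}) \cdot (N, j_{\pm})) = \tau_m(M, i_{\pm}) \cdot \varphi_m(M, i_{\pm})(\tau_m(N, j_{\pm}))$. The equality of the second coordinates, $\varphi_m((M, i_{\pm}) \cdot (N, j_{\pm})) = \varphi_m(M, i_{\pm}) \varphi_m(N, j_{\pm})$, is exactly the statement that $\varphi_m$ is a homomorphism, established in Section \ref{sec_2}. To finish the check that $\tau_m \rtimes \varphi_m$ is a monoid homomorphism I would also record that the unit $M(id)$ maps to the identity of $A \rtimes \out^*(\Gamma_m)$: indeed $\tau_m(M(id)) = 1$ (computed right after the definition of $\tau_m$) and $\varphi_m(M(id)) = [(i_+)_*^{-1} \circ (i_-)_*] = [id]$.

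There is essentially no obstacle here, since all the substantive work has already been absorbed into Proposition \ref{prop_P}. The only point meriting care is purely formal: the automorphism written as $\varphi_m(M, i_{\pm})(\,\cdot\,)$ in Proposition \ref{prop_P} must be literally the automorphism of $A$ used to define the action in the semidirect product, so that the two occurrences of the $\out^*(\Gamma_m)$-action coincide. As this is true by the very construction of the action preceding Proposition \ref{prop_P}, the comparison of the two displays above closes the argument and the corollary follows at once.
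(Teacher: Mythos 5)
Your proof is correct and matches the paper's intent exactly: the paper states Corollary \ref{cor_H1} without further argument because it is an immediate consequence of the cocycle identity in Proposition \ref{prop_P} together with the fact that $\varphi_m$ is a homomorphism, which is precisely the coordinate-by-coordinate check you carry out. Your additional verification that the unit $M(id)$ maps to the identity is a harmless (and correct) completion of the monoid-homomorphism check.
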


\begin{cor} \label{cor_H2}
The map $\tau_m \colon \mathcal{IC}_{g, n}^{(m)} \to \Q(\Gamma_m)_{ab}^{\times} / \pm \Gamma_m$ is a homomorphism.
\end{cor}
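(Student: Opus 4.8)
The plan is to deduce this immediately from the product formula of Proposition \ref{prop_P} by observing that the twisting action becomes trivial on the kernel of $\varphi_m$. First I would recall that $\mathcal{IC}_{g, n}^{(m)} = \ker \varphi_m$ is a submonoid of $\mathcal{C}_{g, n}^{(m)}$, since $\varphi_m$ is a monoid homomorphism; hence the product of two elements of $\mathcal{IC}_{g, n}^{(m)}$ again lies in $\mathcal{IC}_{g, n}^{(m)}$, and the restriction $\tau_m|_{\mathcal{IC}_{g, n}^{(m)}}$ is well defined as a map into $\Q(\Gamma_m)_{ab}^{\times} / \pm \Gamma_m$.

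Next, for $(M, i_{\pm}) \in \mathcal{IC}_{g, n}^{(m)}$ we have $\varphi_m(M, i_{\pm}) = [\mathrm{id}] \in \out^*(\Gamma_m)$, so the automorphism it induces on $\Q(\Gamma_m)_{ab}^{\times} / \pm \Gamma_m$ is the identity. Substituting this into the formula of Proposition \ref{prop_P} gives
\[ \tau_m((M, i_{\pm}) \cdot (N, j_{\pm})) = \tau_m(M, i_{\pm}) \cdot \varphi_m(M, i_{\pm})(\tau_m(N, j_{\pm})) = \tau_m(M, i_{\pm}) \cdot \tau_m(N, j_{\pm}) \]
for all $(M, i_{\pm}), (N, j_{\pm}) \in \mathcal{IC}_{g, n}^{(m)}$, which is precisely the assertion that $\tau_m$ is a homomorphism. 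Equivalently, one could argue straight from Corollary \ref{cor_H1}: the semidirect-product homomorphism $\tau_m \rtimes \varphi_m$ restricts on $\ker \varphi_m$ to a homomorphism whose image lies in the normal subgroup $(\Q(\Gamma_m)_{ab}^{\times} / \pm \Gamma_m) \rtimes \{ 1 \}$, and composing with the projection onto the first factor recovers $\tau_m$.

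Since the entire content is already packaged in Proposition \ref{prop_P} (or Corollary \ref{cor_H1}), there is essentially no obstacle here. The only point requiring any care, and it is trivial, is that $\Q(\Gamma_m)_{ab}^{\times} / \pm \Gamma_m$ is abelian, so that together with the vanishing of the action the product formula collapses to genuine multiplicativity with no ordering ambiguity.
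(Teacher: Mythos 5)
Your proof is correct and is exactly the argument the paper intends: Corollary \ref{cor_H2} is stated without proof as an immediate consequence of Proposition \ref{prop_P} (equivalently Corollary \ref{cor_H1}), with the action of $\varphi_m(M, i_{\pm})$ on $\Q(\Gamma_m)_{ab}^{\times}/\pm\Gamma_m$ collapsing to the identity on $\ker\varphi_m$ precisely because inner automorphisms act trivially on this quotient. Nothing is missing.
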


\subsection{Torsion and homology cobordisms}
We define the involution $\gamma \mapsto \overline{\gamma}$ on $\Gamma_m$ by $\overline{\gamma} = \gamma^{-1}$ and naturally extend it to $\Q(\Gamma_m)$ for each $m.$

The following theorem is an extension of \cite[Theorem 3.\ 10]{CFK}. 
 
\begin{thm}
Let $(M, i_{\pm}), (N, j_{\pm}) \in \mathcal{C}_{g, n}^{(m)}.$
If $(M, i_{\pm}) \sim_m (N, j_{\pm}),$ then
\[ \tau_m(M, i_{\pm}) = \tau_m(N, j_{\pm}) \cdot q \cdot \bar{q} \]
for some $q \in \Q(\Gamma_m)_{ab}^{\times} / \pm \Gamma_m.$
\end{thm}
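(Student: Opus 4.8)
The plan is to realize all the relevant torsions over a single coefficient homomorphism on the cobordism $W$, and then to pit the multiplicativity of torsion (Lemma~\ref{lem_M}) against Poincar\'{e}--Lefschetz duality on the oriented $4$-manifold $W$. First I would fix the coefficient system: let $\rho \colon \pi_1 W \to \Gamma_m$ be the composite of the projection $\pi_1 W \to \pi_1 W / (\pi_1 W)^{(m+1)}$ with the inverse of the isomorphism induced by $i_+$ through $\pi_1 M$ (conditions (iii) and (iv) of Definition~\ref{defn_C}). Writing $\Sigma_+ := i_+(\Sigma) = j_+(\Sigma) \subset \partial W$ for the glued top surface, the gluing map $i_+ \circ j_+^{-1}$ forces the two inclusions $\Sigma \xrightarrow{i_+} M \hookrightarrow W$ and $\Sigma \xrightarrow{j_+} N \hookrightarrow W$ to agree, so that $\rho$ restricted to $\pi_1 M$ (resp.\ $\pi_1 N$) is exactly the homomorphism defining $\tau_m(M, i_\pm)$ (resp.\ $\tau_m(N, j_\pm)$); in particular no $\varphi_m$-twist arises. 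Condition (ii), Remark~\ref{rem_H} and Lemma~\ref{lem_V} then give the vanishing of every twisted homology group I shall need, namely those of $(M, \Sigma_+)$, $(N, \Sigma_+)$, $(W, \Sigma_+)$, $(W, M)$ and $(W, N)$ with $\Q(\Gamma_m)$-coefficients, so that all torsions below are defined.

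Next I would extract two product formulas. Applying Lemma~\ref{lem_M} to the exact sequences of the triples $\Sigma_+ \subset M \subset W$ and $\Sigma_+ \subset N \subset W$ --- where cellular bases are automatically compatible in the sense $[c_i' c_i'' / c_i] = 1$ and the homology correction term is trivial by the vanishing above --- yields
\[ \tau_\rho(W, \Sigma_+) = \tau_m(M, i_\pm) \cdot \tau_\rho(W, M), \qquad \tau_\rho(W, \Sigma_+) = \tau_m(N, j_\pm) \cdot \tau_\rho(W, N). \]
Eliminating $\tau_\rho(W, \Sigma_+)$ gives $\tau_m(M, i_\pm) = \tau_m(N, j_\pm) \cdot \tau_\rho(W, N) \cdot \tau_\rho(W, M)^{-1}$, so it remains to show that $\tau_\rho(W, N) \cdot \tau_\rho(W, M)^{-1}$ is a norm of the form $q \bar{q}$.

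This last point is the heart of the matter, and I expect it to be the main obstacle. Here I would invoke Poincar\'{e}--Lefschetz duality for the oriented $4$-manifold $W$ with $\partial W = M \cup (-N)$: capping with the fundamental class gives a chain homotopy equivalence between $C_*(\widetilde{W}, \widetilde{M})$ and the conjugate-dual of $C_*(\widetilde{W}, \widetilde{N})$ over $(\Q(\Gamma_m), \bar{\cdot})$. Feeding this into Milnor's duality theorem for Reidemeister torsion, in its form over a skew field with involution, produces $\tau_\rho(W, N) = \overline{\tau_\rho(W, M)}^{-1}$ in $\Q(\Gamma_m)_{ab}^\times / \pm \Gamma_m$, where the sign ambiguity is absorbed into $\pm \Gamma_m$ and the exponent $-1$ reflects $\dim W = 4$. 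Setting $q := \tau_\rho(W, M)^{-1}$ then gives
\[ \tau_\rho(W, N) \cdot \tau_\rho(W, M)^{-1} = \overline{\tau_\rho(W, M)}^{-1} \cdot \tau_\rho(W, M)^{-1} = q \, \bar{q}, \]
and hence $\tau_m(M, i_\pm) = \tau_m(N, j_\pm) \cdot q \bar{q}$, as claimed. The delicate points to get right are the exact exponent and sign in the torsion duality theorem over non-commutative coefficients --- so that a genuine norm $q \bar{q}$, rather than $q \bar{q}^{-1}$, appears --- and the verification that the fundamental-class cap product is compatible with the involution $\gamma \mapsto \gamma^{-1}$ defining $\bar{\cdot}$.
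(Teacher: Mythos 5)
Your proposal is correct and follows essentially the same route as the paper's proof: the same coefficient homomorphism $\rho \colon \pi_1 W \to \Gamma_m$ induced via $i_+$, the same vanishing results from Lemma~\ref{lem_V}, the same two applications of Lemma~\ref{lem_M} yielding $\tau_{\rho}(W, i_+(\Sigma)) = \tau_{\rho}(M, i_+(\Sigma)) \cdot \tau_{\rho}(W, M) = \tau_{\rho}(N, j_+(\Sigma)) \cdot \tau_{\rho}(W, N)$, and the same appeal to torsion duality $\tau_{\rho}(W, M) = \overline{\tau_{\rho}(W, N)}^{-1}$ for the oriented $4$-manifold $W$. The only cosmetic difference is your choice $q = \tau_{\rho}(W, M)^{-1}$ in place of the paper's $q = \tau_{\rho}(W, N)$, which are interchangeable by the duality relation.
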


\begin{proof}
We pick a homology cobordism $W$ between $M$ and $N$ as in Definition \ref{defn_C}.
Let $\rho$ be the homomorphism $\pi_1 W \to \pi_1 W / (\pi_1 W)^{(m+1)} \xrightarrow{(i_+)_*^{-1}} \Gamma_m.$
The long exact homology sequences for $(W, M), (W, N), (W, i_+(\Sigma))$ give $H_*(W, M) = H_*(W, N) = H_*(W, i_+(\Sigma)) = 0.$
By Lemma \ref{lem_V} we obtain $H_*^{\rho}(W, M; \Q(\Gamma_m)) = H_*^{\rho}(W, N; \Q(\Gamma_m)) = H_*^{\rho}(W, i_+(\Sigma); \Q(\Gamma_m)) = 0.$
By applying Lemma \ref{lem_M} to the following exact sequence
\[ 0 \to C_*(\widetilde{M}, \widetilde{i_+(\Sigma)}) \otimes \Q(\Gamma_m) \to C_*(\widetilde{W}, \widetilde{i_+(\Sigma)}) \otimes \Q(\Gamma_m) \to C_*(\widetilde{W}, \widetilde{M}) \otimes \Q(\Gamma_m) \to 0, \]
we get
\[ \tau_{\rho}(W, i_+(\Sigma)) = \tau_{\rho}(M, i_+(\Sigma)) \cdot \tau_{\rho}(W, M). \]
Similarly,
\[ \tau_{\rho}(W, i_+(\Sigma)) = \tau_{\rho}(N, j_+(\Sigma)) \cdot \tau_{\rho}(W, N). \]
By the duality of Reidemeister torsion
\[ \tau_{\rho}(W, M) = \overline{\tau_{\rho}(W, N)}^{-1} \]
(e.g., see \cite{CF, KL, Mi}).
Hence
\[ \tau_{\rho}(M, i_+(\Sigma)) = \tau_{\rho}(N, j_+(\Sigma)) \cdot \tau_{\rho}(W, N) \cdot  \overline{\tau_{\rho}(W, N)}, \]
which proves the theorem.
\end{proof}

We set
\[ N_m := \{ \pm \gamma \cdot q \cdot \overline{q} \in \Q(\Gamma_m)_{ab}^{\times} ~;~ \gamma \in \Gamma_m, q \in \Q(\Gamma_m)_{ab}^{\times} \}. \]

\begin{cor} \label{cor_H3}
The map $\tau_m \rtimes \varphi_m \colon \mathcal{H}_{g, n}^{(m)} \to (\Q(\Gamma_m)_{ab}^{\times} / N_m) \rtimes \out^*(\Gamma_m)$ is a homomorphism.
\end{cor}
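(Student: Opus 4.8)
The plan is to deduce the statement formally from Corollary \ref{cor_H1} together with the theorem immediately preceding it, the only genuinely new work being group-theoretic bookkeeping about $N_m$. First I would record that $N_m$ is a subgroup of the abelian group $\Q(\Gamma_m)_{ab}^{\times}$ containing $\pm \Gamma_m$. The delicate point is closure under multiplication: since $\Q(\Gamma_m)_{ab}^{\times}$ is abelian, the anti-involution $\bar{\cdot}$ becomes multiplicative, i.e. $\overline{q_1 q_2} = \overline{q_1}\,\overline{q_2}$, whence
\[ (\pm \gamma_1 q_1 \overline{q_1})(\pm \gamma_2 q_2 \overline{q_2}) = \pm (\gamma_1 \gamma_2)(q_1 q_2)\overline{(q_1 q_2)} \in N_m. \]
Closure under inverses and the presence of the unit are immediate, and taking $q = 1$ shows $\pm \Gamma_m \subseteq N_m$. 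As the ambient group is abelian, $N_m$ is automatically normal, so the quotient $\Q(\Gamma_m)_{ab}^{\times}/N_m$ and the projection $\Q(\Gamma_m)_{ab}^{\times}/\pm \Gamma_m \to \Q(\Gamma_m)_{ab}^{\times}/N_m$ are defined.

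Next I would check that $N_m$ is invariant under the action of $\out^*(\Gamma_m)$. Each $\varphi \in \out^*(\Gamma_m)$ acts through the ring automorphism of $\Q(\Gamma_m)$ induced by a representative automorphism of $\Gamma_m$, and this automorphism commutes with $\bar{\cdot}$ because $\varphi(\gamma^{-1}) = \varphi(\gamma)^{-1}$. Hence $\varphi(\pm \gamma q \overline{q}) = \pm \varphi(\gamma)\,\varphi(q)\,\overline{\varphi(q)} \in N_m$. Consequently the projection above is $\out^*(\Gamma_m)$-equivariant and induces a homomorphism of semidirect products
\[ (\Q(\Gamma_m)_{ab}^{\times}/\pm \Gamma_m) \rtimes \out^*(\Gamma_m) \to (\Q(\Gamma_m)_{ab}^{\times}/N_m) \rtimes \out^*(\Gamma_m). \]
Composing with the homomorphism of Corollary \ref{cor_H1} yields a monoid homomorphism $\Phi \colon \mathcal{C}_{g, n}^{(m)} \to (\Q(\Gamma_m)_{ab}^{\times}/N_m) \rtimes \out^*(\Gamma_m)$.

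Finally I would show that $\Phi$ descends to $\mathcal{H}_{g, n}^{(m)} = \mathcal{C}_{g, n}^{(m)}/\sim_m$. The $\out^*(\Gamma_m)$-component of $\Phi$ is $\varphi_m$, which is already constant on $\sim_m$-classes since it induces the homomorphism $\mathcal{H}_{g, n}^{(m)} \to \out^*(\Gamma_m)$. For the torsion component, the preceding theorem gives $\tau_m(M, i_{\pm}) = \tau_m(N, j_{\pm}) \cdot q \overline{q}$ whenever $(M, i_{\pm}) \sim_m (N, j_{\pm})$, and $q \overline{q} \in N_m$, so $\tau_m(M, i_{\pm})$ and $\tau_m(N, j_{\pm})$ have the same image in $\Q(\Gamma_m)_{ab}^{\times}/N_m$. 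Thus $\Phi$ is constant on $\sim_m$-classes; since $\sim_m$ is a congruence and the target is a group, $\Phi$ factors through the quotient group $\mathcal{H}_{g, n}^{(m)}$, giving the asserted homomorphism. I expect the computational heart of the result to be entirely contained in the preceding theorem; the one place here that requires (minor) care is verifying that $N_m$ is an $\out^*(\Gamma_m)$-invariant subgroup, precisely because $\bar{\cdot}$ is only an anti-involution before passing to the abelianization and becomes multiplicative afterward.
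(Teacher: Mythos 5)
Your proposal is correct and follows exactly the route the paper intends: the paper gives no separate proof of Corollary \ref{cor_H3}, leaving it as an immediate consequence of Corollary \ref{cor_H1} together with the preceding theorem, with the verifications that $N_m$ is an $\out^*(\Gamma_m)$-invariant subgroup containing $\pm\Gamma_m$ and that everything descends to $\mathcal{H}_{g,n}^{(m)}$ left implicit. Your write-up simply makes that routine bookkeeping explicit, including the one genuinely worthwhile observation that the anti-involution $\bar{\cdot}$ becomes multiplicative on the abelianization $\Q(\Gamma_m)_{ab}^{\times}$.
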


\begin{cor} \label{cor_H4}
The map $\tau_m \colon \mathcal{IH}_{g, n}^{(m)} \to \Q(\Gamma_m)_{ab}^{\times} / N_m$ is a homomorphism.
\end{cor}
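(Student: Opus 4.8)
The plan is to read the statement off as the restriction of the homomorphism of Corollary~\ref{cor_H3} to the kernel of $\varphi_m$, exactly mirroring the way Corollary~\ref{cor_H2} was obtained from Corollary~\ref{cor_H1}. So no new geometry is needed: all the genuine work has already been front-loaded into the Theorem (invariance of $\tau_m$ modulo a factor $q \overline{q}$) and into Proposition~\ref{prop_P} (the $\varphi_m$-twisted multiplicativity that gets transported to $\mathcal{H}_{g,n}^{(m)}$). What remains is bookkeeping in the semidirect product.

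First I would confirm that the target $\Q(\Gamma_m)_{ab}^{\times} / N_m$ is a genuine quotient group, i.e.\ that $N_m$ is a subgroup of the abelian group $\Q(\Gamma_m)_{ab}^{\times}$. Using $\overline{q_1 q_2} = \overline{q_1}\,\overline{q_2}$ and $\overline{q^{-1}} = \overline{q}^{\,-1}$, one checks $(\gamma_1 q_1 \overline{q_1})(\gamma_2 q_2 \overline{q_2}) = (\gamma_1 \gamma_2)(q_1 q_2)\overline{(q_1 q_2)}$ and $(\gamma q \overline{q})^{-1} = \gamma^{-1} q^{-1} \overline{q^{-1}}$, both again of the prescribed form, so $N_m$ is closed under products and inverses and is automatically normal. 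Next I would recall that $\tau_m$ descends to a well-defined map $\mathcal{H}_{g,n}^{(m)} \to \Q(\Gamma_m)_{ab}^{\times} / N_m$: this is precisely the content of the Theorem, since homology cobordant cylinders have torsions differing by such a factor $q\overline{q}$ and hence agree modulo $N_m$. This same well-definedness underlies Corollary~\ref{cor_H3}, so I may simply invoke it.

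For the homomorphism property I would argue as follows. By Corollary~\ref{cor_H3} the map $\Phi := \tau_m \rtimes \varphi_m$ is a group homomorphism into the semidirect product, whose multiplication is $(a, \phi)(b, \psi) = (a\,\phi(b), \phi\psi)$. For $x, y \in \mathcal{IH}_{g,n}^{(m)} = \ker \varphi_m$ we have $\varphi_m(x) = \varphi_m(y) = 1$, so the $\out^*(\Gamma_m)$-action in the twisted product untwists. Comparing the first coordinates of $\Phi(xy) = \Phi(x)\Phi(y)$ then yields $\tau_m(xy) = \tau_m(x)\cdot \tau_m(y)$, which is exactly the assertion that $\tau_m|_{\mathcal{IH}_{g,n}^{(m)}}$ is a homomorphism into $\Q(\Gamma_m)_{ab}^{\times} / N_m$.

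I do not expect a real obstacle here; the only subtlety worth double-checking is that restricting a semidirect-product homomorphism to the preimage of the identity of the $\out^*(\Gamma_m)$-factor indeed lands in the normal subgroup $(\Q(\Gamma_m)_{ab}^{\times} / N_m) \times \{1\}$ and is a homomorphism onto the first factor, which is immediate from the product formula above. The conceptual weight of the corollary lies entirely in the Theorem, which supplies the passage from $\pm \Gamma_m$ to the larger group $N_m$ and thereby makes $\tau_m$ invariant under the equivalence relation $\sim_m$.
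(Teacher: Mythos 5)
Your proposal is correct and matches the paper's (implicit) argument exactly: the paper states this corollary as an immediate consequence of the preceding Theorem (which makes $\tau_m$ well defined on $\mathcal{H}_{g,n}^{(m)}$ modulo $N_m$) together with Corollary~\ref{cor_H3}, restricted to $\ker\varphi_m$ where the semidirect-product twisting disappears. Your additional verification that $N_m$ is a subgroup is a harmless piece of bookkeeping the paper leaves to the reader.
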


The following theorem showed that if $(g, n) \neq (0, 0), (0, 1)$ and $m > 0,$ then $\mathcal{H}_{g, n}^{(m)}$ is another enlargement of $\mathcal{M}_{g, n}.$

\begin{thm} \label{thm_H}
If $(g, n) \neq (0, 0), (0, 1),$ then the homomorphisms $\mathcal{H}_{g, n}^{(m)} \to \mathcal{H}_{g, n}^{(0)}, \mathcal{IH}_{g, n}^{(m)} \to \mathcal{IH}_{g, n}^{(0)}$ are not surjective for $m > 0.$
\end{thm}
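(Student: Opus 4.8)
The plan is to detect non-surjectivity by means of the abelian Reidemeister torsion $\tau_0$, using the characterization of order-one cylinders in Proposition \ref{prop_V}. The key observation is that for every $m>0$ a homology cylinder of order $m$ is in particular of order $1$, so by Proposition \ref{prop_V} its torsion satisfies $\tau_0=1$ in $\Q(\Gamma_0)_{ab}^{\times}/\pm\Gamma_0$, and hence also in $\Q(\Gamma_0)_{ab}^{\times}/N_0$ since $\pm\Gamma_0\subset N_0$. As the composite $\mathcal{H}_{g,n}^{(m)}\to\mathcal{H}_{g,n}^{(0)}$ factors through $\mathcal{H}_{g,n}^{(1)}$, and as $\tau_0$ descends to a well-defined homomorphism modulo $N_0$ by Corollaries \ref{cor_H3} and \ref{cor_H4}, the image of $\mathcal{IH}_{g,n}^{(m)}\to\mathcal{IH}_{g,n}^{(0)}$ is contained in the kernel of
\[ \tau_0\colon\mathcal{IH}_{g,n}^{(0)}\to\Q(\Gamma_0)_{ab}^{\times}/N_0. \]

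First I would reduce both statements to a single existence assertion: it suffices to produce, for each $(g,n)\neq(0,0),(0,1)$, a class $[M,i_{\pm}]\in\mathcal{IH}_{g,n}^{(0)}$ with $\tau_0(M,i_{\pm})\neq1$ in $\Q(\Gamma_0)_{ab}^{\times}/N_0$. Such a class cannot be represented by an order-$m$ cylinder for any $m>0$: if $(M',i'_{\pm})\sim_0(M,i_{\pm})$ with $(M',i'_{\pm})$ of order $m$, then the cobordism invariance of torsion (the theorem preceding Corollary \ref{cor_H3}) together with $\tau_0(M',i'_{\pm})=1$ yields
\[ \tau_0(M,i_{\pm})=\tau_0(M',i'_{\pm})\cdot q\bar q=q\bar q\in N_0, \]
a contradiction. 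Viewing the same class inside $\mathcal{H}_{g,n}^{(0)}$ settles the non-Torelli statement simultaneously, since this argument never uses that $M'$ itself is Torelli.

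For the existence I would invoke Cha--Friedl--Kim \cite{CFK}: for $(g,n)\neq(0,0),(0,1)$ they show that the abelianization of $\mathcal{IH}_{g,n}^{(0)}$ contains a summand isomorphic to $(\Z/2)^{\infty}$, detected by abelian Reidemeister torsion taken modulo the duality indeterminacy (for instance via the sign of an evaluation at a suitable root of unity, which is trivial on every norm $q\bar q$). Hence $\tau_0\colon\mathcal{IH}_{g,n}^{(0)}\to\Q(\Gamma_0)_{ab}^{\times}/N_0$ has nontrivial image, so any class outside its kernel furnishes the required example. The hard part will be the bookkeeping of indeterminacies: one must verify that the detecting homomorphisms of \cite{CFK} factor through our $\tau_0$ modulo $N_0$, that is, modulo exactly the $\pm\Gamma_0\cdot\langle q\bar q\rangle$ indeterminacy forced by duality, so that nontriviality of their invariant forces $\tau_0\neq1$ in $\Q(\Gamma_0)_{ab}^{\times}/N_0$. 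Granting this identification, both non-surjectivity statements follow formally.
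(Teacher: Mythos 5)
Your proposal is correct and is essentially the paper's own proof: both arguments use Proposition \ref{prop_V} to show that every class in the image of $\mathcal{H}_{g,n}^{(m)} \to \mathcal{H}_{g,n}^{(0)}$ (resp.\ $\mathcal{IH}_{g,n}^{(m)} \to \mathcal{IH}_{g,n}^{(0)}$) has trivial $\tau_0$ in $\Q(\Gamma_0)_{ab}^{\times}/N_0$, and then invoke Cha--Friedl--Kim to produce classes on which $\tau_0$ is nontrivial modulo $N_0$. The indeterminacy bookkeeping you flag as the hard part is actually vacuous: for $m=0$ the homomorphisms of Corollaries \ref{cor_H3} and \ref{cor_H4} are by construction exactly the Cha--Friedl--Kim invariant (the cobordism-invariance theorem preceding Corollary \ref{cor_H3} is stated as an extension of \cite[Theorem 3.\ 10]{CFK}), so their nontriviality results apply verbatim.
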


\begin{proof}
By Proposition \ref{prop_V} the image of the composition of $\mathcal{H}_{g, n}^{(m)} \to \mathcal{H}_{g, n}^{(0)}$ and $\tau_0 \rtimes \varphi_0 \colon \mathcal{H}_{g, n}^{(0)} \to (\Q(\Gamma_0)_{ab}^{\times} / N_0) \rtimes \out^*(\Gamma_0)$ is contained in $1 \times \out^*(\Gamma_0)$ and that of  $\mathcal{IH}_{g, n}^{(m)} \to \mathcal{IH}_{g, n}^{(0)}$ and $\tau_0 \colon \mathcal{IH}_{g, n}^{(0)} \to \Q(\Gamma_0)_{ab}^{\times} / N_0$ is trivial.
On the other hand, in \cite{CFK} Cha, Friedl and Kim detected elements of the image of $\tau_0 \rtimes \varphi_0 \colon \mathcal{H}_{g, n}^{(0)} \to (\Q(\Gamma_0)_{ab}^{\times} / N_0) \rtimes \out^*(\Gamma_0)$ not contained in $1 \times \out^*(\Gamma_0)$ and nontrivial ones of $\tau_0 \colon \mathcal{IH}_{g, n}^{(0)} \to \Q(\Gamma_0)_{ab}^{\times} / N_0$ when $(g, n) \neq (0, 0), (0, 1).$
These prove the theorem.
\end{proof}

\begin{rem} \label{rem_R}
\begin{enumerate}
\item In fact one can say more about the cokernels of the homomorphisms:
It follows from the above argument in the proof and \cite[Theorems 1.\ 2, 1.\ 3, 7.\ 2]{CFK} the cokernel of the homomorphism $\mathcal{H}_{g, n}^{(m)} \to \mathcal{H}_{g, n}^{(0)}$ contains a direct summand isomorphic to $(Z/2)^\infty$ if $(g, n) \neq (0, 0), (0, 1)$ and one isomorphic to $\Z^\infty$ if $n > 1$, and the cokernel of the homomorphism $\mathcal{IH}_{g, n}^{(m)} \to \mathcal{IH}_{g, n}^{(0)}$ contains a direct summand isomorphic to $(Z/2)^\infty$ if $(g, n) \neq (0, 0), (0, 1)$ and one isomorphic to $\Z^\infty$ if $g > 1$ or $n > 1$.
\item It is an important question whether the homomorphisms $\mathcal{H}_{g, n}^{(m)} \to \mathcal{H}_{g, n}^{(0)}, \mathcal{IH}_{g, n}^{(m)} \to \mathcal{IH}_{g, n}^{(0)}$ are in general injective or not.
 \end{enumerate}
\end{rem}

%%%%%%% Section 4 %%%%%%%%%%%%%%%%%%%%%%%%%%%%%%%%%%%%%%%%%%%%%%%%%%%
\section{Construction and computation} \label{sec_4}
For nontrivial $\gamma \in \pi$ and a tame knot $K \subset S^3,$ we construct a homology cylinder $M(\gamma, K)$ as follows.
See \cite[Section 4]{CFK} for various constructions of homology cylinders.

Let $* \in \Sigma$ be the base point for $\pi.$
We choose a smooth path $f \colon [0, 1] \to \Sigma$ representing $\gamma$ such that $f^{-1}(*) = \{ 0, 1 \},$ and define $\tilde{f} \colon [0, 1] \to \Sigma \times [0, 1],$ $c \colon [0, 1] \to \Sigma \times [0, 1]$ by $\tilde{f}(t) = (f(t), t)$ and $c(t) = (*, 1-t).$
After pushed into the interior, $\tilde{f} \cdot c$ determines a tame knot $J \subset \Int M(id)$.
Let $E_J$ be the complement of an open tubular neighborhood $Z$ of $J$.
We take a framing of $J$ so that a meridian of $J$ represents the conjugacy class of the generator of the kernel of $\pi_1 \partial Z \to H_1(M(id))$ compatible with the orientation of $J$ and that a longitude of $J$ represents the conjugacy class of the image of $\gamma$ by $(i_-)_* \colon \pi \to \pi_1 E_J$.
Let $E_K$ be the exterior of $K$.
Now $M(\gamma, K)$ is the result of attaching $-E_K$ to $E_J$ along the boundaries so that a longitude and a meridian of $K$ correspond to a meridian and a longitude of $J$ respectively.
Note that if $(g, n) = (0, 0)$ or $(0, 1),$ then $M(1, K) = M(id)$ for all $K$.

\begin{prop} \label{prop_C1}
If $(g, n) \neq (0, 0), (0, 1)$ and $\gamma \in \pi^{(m)} \setminus 1,$ then $M(\gamma, K) \in \mathcal{I} \overline{\mathcal{C}}_{g, n}^{(m)}$ for all $K.$
\end{prop}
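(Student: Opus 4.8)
The plan is to verify the three requirements for membership in $\mathcal{I}\overline{\mathcal{C}}_{g,n}^{(m)}$ separately: that $(M(\gamma,K),i_\pm)$ is a homology cylinder of order $m$ (condition (iv) of the definition), that the underlying $3$-manifold is irreducible, and that $\varphi_m(M(\gamma,K))=1$. Conditions (i)--(iii) are inherited from $M(\mathrm{id})$ because the whole construction is supported in the interior, so only (iv) is substantive. Throughout I write $G:=\pi_1 M(\gamma,K)$, and I recall that since $M(\gamma,K)=E_J\cup_T(-E_K)$ with $T=\partial E_J=\partial E_K$, the van Kampen theorem gives $G=\pi_1 E_J *_{\pi_1 T}\pi_1 E_K$. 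The gluing identifies $\mu_K$ with $\lambda_J$ and $\lambda_K$ with $\mu_J$, and I will use the classical facts that $\mu_K$ generates $H_1(E_K)=\Z$ and normally generates $\pi_1 E_K$, while the longitude satisfies $\lambda_K\in(\pi_1 E_K)^{(2)}$; by construction $\lambda_J$ maps to $\gamma$ under the filling map $\pi_1 E_J\to\pi$.

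First I would establish that $(M(\gamma,K),i_\pm)$ is an ordinary homology cylinder, i.e.\ $(i_\pm)_*\colon H_*(\Sigma)\to H_*(M(\gamma,K))$ are isomorphisms. This is a Mayer--Vietoris computation for the decomposition $E_J\cup_T E_K$. Because $\lambda_K$ is null-homologous in $E_K$ while $\mu_K$ generates $H_1(E_K)$, the boundary class $\mu_J$ dies and $\lambda_J$ maps to a generator exactly as it does for the trivial filling $Z$ in $M(\mathrm{id})=E_J\cup_T Z$; hence the Mayer--Vietoris sequence of $M(\gamma,K)$ coincides with that of $M(\mathrm{id})$ and $H_*(M(\gamma,K))\cong H_*(M(\mathrm{id}))\cong H_*(\Sigma)$. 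This step needs no hypothesis on $\gamma$.

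The main step is condition (iv), and this is where $\gamma\in\pi^{(m)}$ enters. Collapsing $E_K$ onto the trivial solid-torus filling defines a degree-one map $M(\gamma,K)\to M(\mathrm{id})$ which is the identity on $E_J$; on fundamental groups it yields a surjection $q\colon G\to\pi$ whose kernel is the normal closure of the image of $(\pi_1 E_K)^{(1)}$. Since $\mu_K=\lambda_J$ and $q(\lambda_J)=\gamma\in\pi^{(m)}$, the subgroup $\pi_1 E_K$---being normally generated by $\mu_K$---maps into $G^{(m)}$, so its commutator subgroup maps into $G^{(m+1)}$; using also $\lambda_K\in(\pi_1 E_K)^{(2)}$ to control the amalgamating relation, one obtains $\ker q\subseteq G^{(m+1)}$. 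As $q$ is surjective, this forces $G/G^{(m+1)}\cong\pi/\pi^{(m+1)}=\Gamma_m$, compatibly with the inclusion of $E_J$ and hence with both $i_\pm$; therefore $(i_\pm)_*\colon\Gamma_m\to G/G^{(m+1)}$ are isomorphisms. The delicate point is exactly the containment $\ker q\subseteq G^{(m+1)}$: the naive derived-series bookkeeping is circular (deducing $\mu_K\in G^{(m)}$ already presupposes control of $\ker q$), and to make it rigorous I would invoke the standard behaviour of the derived series under infection along a curve lying in the $m$-th derived subgroup (cf.\ \cite{COT}), which is precisely the assertion $G/G^{(m+1)}\cong\Gamma_m$. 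This is the step I expect to be the main obstacle.

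Finally, irreducibility and $\varphi_m$-triviality are comparatively soft. If $K$ is trivial, then $E_K$ is a solid torus whose meridian is $\lambda_K=\mu_J$, so the filling is the trivial one and $M(\gamma,K)=M(\mathrm{id})$, which is irreducible for every admissible $(g,n)$; if $K$ is nontrivial, then $E_K$ is irreducible with incompressible boundary, and since $\gamma\neq1$ the knot $J$ is homotopically essential in $M(\mathrm{id})$, so $E_J$ is likewise irreducible with $T$ incompressible, whence $M(\gamma,K)=E_J\cup_T E_K$ is irreducible by the standard gluing criterion for irreducible pieces along an incompressible torus. For $\varphi_m$, the construction being supported in the interior of $M(\mathrm{id})$ means that the markings $i_\pm$ and the identity-on-$E_J$ map $q$ identify $(i_+)_*^{-1}\circ(i_-)_*$ on $\Gamma_m$ with the corresponding automorphism for $M(\mathrm{id})$, which is trivial and visibly preserves the boundary conjugacy classes; hence $\varphi_m(M(\gamma,K))=1$. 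Combining the three steps gives $M(\gamma,K)\in\mathcal{I}\overline{\mathcal{C}}_{g,n}^{(m)}$.
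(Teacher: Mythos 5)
Your architecture coincides with the paper's: the degree-one map $f\colon M(\gamma,K)\to M(\mathrm{id})$ collapsing $E_K$ to the solid torus $Z$, the van Kampen identification of $\pi_1 M(\gamma,K)$ as $\pi_1 E_J *_{\pi_1 T}\pi_1 E_K$ with $\ker f_*$ the normal closure of $(\pi_1 E_K)^{(1)}$, irreducibility by gluing irreducible pieces along incompressible tori, and triviality of $\varphi_m$ from the commutative diagram comparing the markings of $M(\gamma,K)$ and $M(\mathrm{id})$. But the proof is not complete, and the gap sits exactly at the one substantive point of the proposition: the containment $\ker f_*\subseteq G^{(m+1)}$, equivalently $\mu_K\in G^{(m)}$, where $G=\pi_1 M(\gamma,K)$. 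You correctly observe that the naive derived-series bookkeeping is circular, and you then discharge the step by appealing to ``the standard behaviour of the derived series under infection (cf.\ COT)'' --- which, as you yourself concede, is precisely the assertion $G/G^{(m+1)}\cong\Gamma_m$ that has to be proven. A blind proof cannot close its central step by citing, without a precise statement, a folklore lemma that coincides with the conclusion; this is a genuine missing idea, not a presentational shortcut.

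What the paper supplies at this point is a bootstrapping induction up the derived series, powered by a witness for $\gamma\in\pi^{(m)}$ that lives inside $E_J$. Since $\gamma\in\pi^{(m)}$, the longitude $\lambda_J$ bounds a map of a symmetric $m$-stage grope in $M(\mathrm{id})$ meeting $J$ transversely (Conant--Teichner); deleting the intersections gives a punctured grope in $E_J$ whose puncture boundaries are meridians of $J$, yielding $\lambda_J=\prod_i\xi_i\mu_J^{\pm1}\xi_i^{-1}\prod_j[a_j,b_j]$ with $a_j,b_j\in\pi_1 E_J$ bounding punctured $(m-1)$-stage gropes, and so on recursively. Now induct: if $\mu_K\in G^{(k)}$ with $k<m$, then $\pi_1 E_K\subseteq G^{(k)}$ (it is normally generated by $\mu_K$, and $G^{(k)}$ is normal in $G$), hence $\mu_J=\lambda_K\in(\pi_1 E_K)^{(1)}\subseteq G^{(k+1)}$; feeding this into the recursive grope expression gives $\mu_K=\lambda_J\in G^{(k+1)}$, and after $m$ steps $\mu_K\in G^{(m)}$, whence $\ker f_*\subseteq G^{(m+1)}$. (One can also make this purely algebraic: since the filling map $\pi_1 E_J\to\pi$ is surjective, the preimage of $\pi^{(m)}$ is $(\pi_1 E_J)^{(m)}\cdot\langle\langle\mu_J\rangle\rangle$, so $\lambda_J=w\cdot u$ with $w\in(\pi_1 E_J)^{(m)}$ and $u$ in the normal closure of $\mu_J$ in $\pi_1 E_J$; the same induction then closes without gropes.) Some such mechanism is indispensable; without it your write-up asserts, rather than proves, the proposition.
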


\begin{proof}
If $K$ is a trivial knot, then $M(\gamma, K) = M(id) \in \mathcal{I} \overline{\mathcal{C}}_{g, n}^{(m)}$ for all nontrivial $\gamma \in \pi$ and all $m$.
In the following we assume that $K$ is nontrivial.

Since $E_J$ and $E_K$ are both irreducible and $\partial Z$ and $\partial E_K$ are both incompressible, $M(\gamma, K)$ is also irreducible.

Extending a degree $1$ map $(E_K, \partial E_K ) \to (Z, \partial Z)$ by the identity map on $E_J$, we have $f \colon M(\gamma, K) \to M(id).$
We show that $f_* \colon \pi_1 M(\gamma, K) / (\pi_1 M(\gamma, K))^{(m+1)} \to \pi_1 M(id) / (\pi_1 M(id))^{(m+1)}$ is an isomorphism, which immediately gives the desired conclusion from the following commutative diagram:
\[ \xymatrix{
& \pi_1 M(\gamma, K) / (\pi_1 M(\gamma, K))^{(m+1)} \ar[dd]^{f_*} & \\
\Gamma_m \ar[ru]^{(i_+)_*} \ar[rd]_{(i_-)_*} & & \Gamma_m \ar[lu]_{(i_+)_*} \ar[ld]^{(i_-)_*} \\
& \pi_1 M(id) / (\pi_1 M(id))^{(m+1)} &
} \]

Let $\lambda_J, \mu_J \in \pi_1 E_J$ and $\lambda_K, \mu_K \in \pi_1 E_K$ be longitude-meridian pairs. 
By the van Kampen theorem $\pi_1 M(\gamma, K)$ is the amalgamated product of $\pi_1 E_J$ and $\pi_1 E_K$ with $\lambda_J = \mu_K$ and $\mu_J = \lambda_K$, and $\pi_1 M(id)$ is that of  $\pi_1 E_J$ and $\langle t \rangle$ with $\lambda_J = t$ and $\mu_J = 1.$
Here $f_* \colon \pi_1 M(\gamma, K) \to \pi_1 M(id)$ is the identity map on $\pi_1 E_J$ and is the Hurewicz map on $\pi_1 E_K.$
Hence the kernel is the normal closure of $(\pi_1 E_K)^{(1)}$ in $\pi_1 M(\gamma, K).$
Thus it suffices to show that $\pi_1 E_K \subset (\pi_1 M(\gamma, K))^{(m)}.$
Since $\pi_1 E_K$ is normally generated by $\mu_K,$ it suffices to show that $\mu_K \in (\pi_1 M(\gamma, K))^{(m)}.$

Suppose that $\mu_K \in (\pi_1 M(\gamma, K))^{(k)}$ for an integer $k < m.$
Since $\gamma \in \pi^{(m)},$ a longitude of $J$ bounds a map from a symmetric $m$-stage grope in $M(id)$ such that the grope stages meet $J$ transversely (e.g., see \cite{CT}).
Hence it bounds a map from a punctured symmetric $m$-stage grope in $E_J,$ where the boundaries of these punctures are meridians of $E_J.$
Therefore for some $\xi_i \in \pi_1 E_J,$
\[ \lambda_J = \prod_i \xi_i \mu_J^{\pm 1} \xi_i^{-1} \prod_j [a_j, b_j], \]
where representatives of $a_j, b_j \in \pi_1 E_J$ bound a map from punctured $(m-1)$-stage gropes in $E_J.$
Hence $a_j, b_j$ have similar expressions as $\lambda_J.$
Continuing in this fashion, we see from $\mu_J = \lambda_K \in (\pi_1 M(\gamma, K))^{(k+1)}$ that $\mu_K = \lambda_J \in (\pi_1 M(\gamma, K))^{(k+1)}.$
It follows by induction that $\mu_K \in (\pi_1 M(\gamma, K))^{(m)}.$
\end{proof}

\begin{prop} \label{prop_C2}
Let $\gamma \in \pi^{(m)}.$
Then $\tau_m(M(\gamma, K)) =  [\Delta_K(\gamma)]$ for all $K.$
\end{prop}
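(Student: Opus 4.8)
The plan is to compute $\tau_m(M(\gamma,K))$ by cutting $M(\gamma,K)$ along the torus $T:=\partial E_J$ and comparing with the product cylinder $M(id)$, whose torsion is trivial. Recall that $M(\gamma,K)=E_J\cup_T(-E_K)$, while $M(id)=E_J\cup_T Z$, where $Z$ is the solid torus one glues back to recover $J$; both manifolds share the piece $E_J$, which contains the marking $A:=i_+(\Sigma)$, and they share $\rho_m|_{E_J}$ because the map $f_*$ of Proposition \ref{prop_C1} is the identity on $\pi_1E_J$. Since I cannot compute the torsion of $E_J$ directly, the idea is to express $\tau_m$ of both manifolds through the \emph{same} unknown contributions of $(E_J,A)$ and of $T$, and then to cancel them using the relation $\tau_m(M(id))=1$.

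First I would pin down the coefficient system on each piece. Write $g\in\Gamma_m$ for the image of $\gamma$; as $\gamma\in\pi^{(m)}$, $g$ lies in the abelian group $\pi^{(m)}/\pi^{(m+1)}$, and I assume $g\neq 1$ (the case $\gamma\in\pi^{(m+1)}$ makes both sides trivial and is handled separately, since $\Delta_K(1)=\pm1$). From the van Kampen description in the proof of Proposition \ref{prop_C1} one has $\mu_J=\lambda_K$ and $\lambda_J=\mu_K$ on $T$. Because $\lambda_K\in[\pi_1E_K,\pi_1E_K]$ and $\pi_1E_K\subset(\pi_1M(\gamma,K))^{(m)}$, the longitude $\lambda_K$ lies in $(\pi_1M(\gamma,K))^{(m+1)}$, whence $\rho_m(\mu_J)=1$; and since the Hurewicz image of $\mu_K$ is the generator identified with $\lambda_J$, which represents $\gamma$, one gets $\rho_m(\lambda_J)=g$. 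Thus on $E_K$ the representation $\rho_m$ is exactly the abelianization $\pi_1E_K\to\langle g\rangle$ with $\mu_K\mapsto g$, on $Z$ the core maps to $g$, and on $T$ one generator maps to $g\neq1$ and the other to $1$. The standard facts that a knot exterior is $\Q(t)$-acyclic and that a circle carrying a nontrivial $\langle g\rangle$-coefficient is acyclic then give $H_*^{\rho_m}(T)=H_*^{\rho_m}(E_K)=H_*^{\rho_m}(Z)=0$.

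Next I would apply the multiplicativity of torsion (Lemma \ref{lem_M}) to the Mayer--Vietoris sequence
\[ 0\to C_*(\widetilde T)\to C_*(\widetilde{E_J},\widetilde A)\oplus C_*(\widetilde{E_K})\to C_*(\widetilde{M(\gamma,K)},\widetilde A)\to 0 \]
and the analogous one with $E_K$ replaced by $Z$, all tensored with $\Q(\Gamma_m)$. Since $T$, $E_K$, $Z$ and $(M,A)$ are acyclic, the associated long exact homology sequence vanishes, $(E_J,A)$ is acyclic as well, and Lemma \ref{lem_M} yields $\tau_m(M(\gamma,K))=\tau(E_J,A)\,\tau(E_K)\,\tau(T)^{-1}$ together with $1=\tau_m(M(id))=\tau(E_J,A)\,\tau(Z)\,\tau(T)^{-1}$. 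Here I must verify the cell-compatibility hypothesis $[c_i'c_i''/c_i]=1$, which holds for a CW structure in which $T$ is a subcomplex of both $E_J$ and the filling piece; this is routine. Dividing the two relations cancels $\tau(E_J,A)$ and $\tau(T)$ and leaves $\tau_m(M(\gamma,K))=\tau(E_K)/\tau(Z)$.

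Finally, both remaining torsions are classical. The solid torus retracts to its core circle, on which $g$ acts, so $\tau(Z)=(g-1)^{-1}$; and Milnor's computation of the abelian Reidemeister torsion of a knot exterior gives $\tau(E_K)=\Delta_K(g)/(g-1)$ in the same normalisation (the unknot, for which $E_K$ is itself a solid torus and $\Delta_K=1$, provides a consistency check). Hence $\tau_m(M(\gamma,K))=\Delta_K(g)=[\Delta_K(\gamma)]$, the two factors $(g-1)$ cancelling precisely because the trivial filling $Z$ is the unknot exterior. I expect the main obstacle to lie in the second paragraph: correctly identifying $\rho_m$ on the pieces---in particular the vanishing $\rho_m(\mu_J)=1$ and the identification $\rho_m(\lambda_J)=g$, which rest on the derived-series bookkeeping of Proposition \ref{prop_C1}---and confirming that the coefficient systems on the two fillings agree on the shared torus $T$, so that the comparison with $M(id)$ is legitimate. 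The cancellation of $\tau(E_J,A)$ via $\tau_m(M(id))=1$ is the device that makes the computation go through without ever evaluating the torsion of $E_J$.
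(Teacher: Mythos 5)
For the main case $[\gamma]\neq 1\in\Gamma_m$ your computation is correct and is essentially the paper's own proof: the same two short exact sequences comparing $M(\gamma,K)=E_J\cup_T(-E_K)$ with $M(id)=E_J\cup_T Z$, the same identification $\rho_m(\mu_J)=1$, $\rho_m(\lambda_J)=[\gamma]$ of the coefficient systems on the pieces, the same local torsions $\tau(E_K)=[\Delta_K(\gamma)(\gamma-1)^{-1}]$ and $\tau(Z)=[(\gamma-1)^{-1}]$, and the same device of cancelling the unknown $\tau(E_J,i_+(\Sigma))$ against $\tau_m(M(id))=1$.

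The gap is the degenerate case $[\gamma]=1\in\Gamma_m$, i.e.\ $\gamma\in\pi^{(m+1)}$, which the statement allows and which you dispose of by asserting that ``both sides are trivial.'' The right-hand side is indeed $[\Delta_K(1)]=[\pm 1]=1$, but the triviality of $\tau_m(M(\gamma,K))$ is precisely what must be proved there, and your method breaks down: since $\pi_1E_K$ is normally generated by $\mu_K=\lambda_J\mapsto[\gamma]=1$, the representation $\rho_m$ is trivial on all of $E_K$, $Z$ and $T$, so these pieces are \emph{not} $\Q(\Gamma_m)$-acyclic and the unadorned multiplicativity $\tau(M)=\tau(E_J,A)\,\tau(E_K)\,\tau(T)^{-1}$ you rely on is no longer available. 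One has to invoke Lemma \ref{lem_M} in its full form, with homology bases and the torsion of the associated long exact sequence; the paper does this by choosing bases of $H_*^{\rho_m}(\partial E_K;\Q(\Gamma_m))$ and $H_*^{\rho_m}(E_K;\Q(\Gamma_m))$, transporting them by the degree-one map $f\colon M(\gamma,K)\to M(id)$ of Proposition \ref{prop_C1}, and cancelling the resulting correction terms against the corresponding ones for $M(id)$. This case is not a pedantic afterthought: the remark following the proof in the paper singles it out as exactly the point where Proposition \ref{prop_C2} generalizes \cite[Proposition 4.3]{CFK}. (Since Theorems \ref{thm_C} and \ref{thm_A} only apply the proposition to $\gamma\in\pi^{(m)}\setminus\pi^{(m+1)}$, your argument suffices for those applications, but it does not prove the proposition as stated.)
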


\begin{proof}
In all of the calculations below, we implicitly tensor the chain complexes with $\Q(\Gamma_m).$

First we suppose that $[\gamma] = 1 \in \Gamma_m.$
We have the following short exact sequences:
\[ 0 \to C_*(\widetilde{\partial E_K}) \to C_*(\widetilde{E_J}, \widetilde{i_+(\Sigma)}) \oplus C_*(\widetilde{E_K}) \to C_*(\widetilde{M(\gamma, K)}, \widetilde{i_+(\Sigma)}) \to 0, \]
\[ 0 \to C_*(\widetilde{\partial Z}) \to C_*(\widetilde{E_J}, \widetilde{i_+(\Sigma)}) \oplus C_*(\widetilde{Z}) \to C_*(\widetilde{M(id)}, \widetilde{i_+(\Sigma)}) \to 0, \]
where we consider  $\rho_m \colon \pi_1 M(\gamma, K) \to \Gamma_m,$ $\rho_m' \colon \pi_1 M(id) \to \Gamma_m.$
Let $f \colon M(\gamma, K) \to M(id)$ be the map taken in the proof of Proposition \ref{prop_C1}.
It is easily seen that the induced maps $H_*^{\rho_m}(\partial E_K; \Q(\Gamma_m)) \to H_*^{\rho_m'}(\partial Z; \Q(\Gamma_m)),$ $H_*^{\rho_m}(E_K; \Q(\Gamma_m)) \to H_*^{\rho_m'}(Z; \Q(\Gamma_m))$ are isomorphisms.
We pick bases $\boldsymbol{h}, \boldsymbol{h}', \boldsymbol{h}''$ of $H_*^{\rho_m}(\partial E_K; \Q(\Gamma_m)), H_*^{\rho_m}(E_J, i_+(\Sigma); \Q(\Gamma_m)), H_*^{\rho_m}(E_K; \Q(\Gamma_m))$ respectively such that the isomorphism $H_*^{\rho_m}(\partial E_K; \Q(\Gamma_m)) \to H_*^{\rho_m}(E_J, i_+(\Sigma); \Q(\Gamma_m)) \oplus H_*^{\rho_m}(E_K; \Q(\Gamma_m))$ maps $\boldsymbol{h}$ to $\boldsymbol{h}' \oplus \boldsymbol{h}''.$
By Lemma \ref{lem_M} we obtain
\begin{align*}
[\tau(C_*(\widetilde{E_J}, \widetilde{i_+(\Sigma)}), \boldsymbol{h}')] \cdot [\tau(C_*(\widetilde{E_K}), \boldsymbol{h}'')] &= [\tau(C_*(\widetilde{\partial E_K}), \boldsymbol{h})] \cdot \tau_{\rho_m}(M(\gamma, K), i_+(\Sigma)), \\
[\tau(C_*(\widetilde{E_J}, \widetilde{i_+(\Sigma)}), \boldsymbol{h}')] \cdot [\tau(C_*(\widetilde{Z}), f_*(\boldsymbol{h}''))] &= [\tau(C_*(\widetilde{\partial Z}), f_*(\boldsymbol{h}))] \cdot \tau_{\rho_m'}(M(id), i_+(\Sigma)),
\end{align*}
where we consider bases of chain complexes consisting of cells and the notation of these bases is omitted. 
Since
\begin{align*}
[\tau(C_*(\widetilde{\partial E_K}), \boldsymbol{h})] &= [\tau(C_*(\widetilde{\partial Z}), f_*(\boldsymbol{h}))], \\
[\tau(C_*(\widetilde{E_K}), \boldsymbol{h}'')] &= [\tau(C_*(\widetilde{Z}), f_*(\boldsymbol{h}''))], \\
\tau_{\rho_m'}(M(id), i_+(\Sigma)) &= 1,
\end{align*}
we have
\[ \tau_{\rho_m}(M(\gamma, K), i_+(\Sigma)) = 1 = [\Delta_K(\gamma)]. \]

Next we suppose that $[\gamma] \neq 1 \in \Gamma_m.$
In this case $H_*^{\rho_m}(\partial E_K; \Q(\Gamma_m)), H_*^{\rho_m}(E_K; \Q(\Gamma_m)), H_*^{\rho_m'}(\partial Z; \Q(\Gamma_m)), H_*^{\rho_m'}(Z; \Q(\Gamma_m))$ vanish.
Therefore $H_*^{\rho_m}(E_J, i_+(\Sigma); \Q(\Gamma_m))$ also vanishes.
By Lemma \ref{lem_M} we obtain
\begin{align*}
\tau_{\rho_m}(E_J, i_+(\Sigma)) \cdot \tau_{\rho_m'}(E_K) &= \tau_{\rho_m}(\partial E_K) \cdot \tau_{\rho_m}(M(\gamma, K), i_+(\Sigma)), \\
\tau_{\rho_m'}(E_J, i_+(\Sigma)) \cdot \tau_{\rho_m'}(Z) &= \tau_{\rho_m'}(\partial Z) \cdot \tau_{\rho_m'}(M(id), i_+(\Sigma)).
\end{align*}
Here
\begin{align*}
&\tau_{\rho_m}(E_K) = [\Delta_K(\gamma) (\gamma - 1)^{-1}], \\
&\tau_{\rho_m'}(Z) = [(\gamma - 1)^{-1}], \\
&\tau_{\rho_m}(\partial E_K) = \tau_{\rho_m'}(\partial Z) = \tau_{\rho_m'}(M(id), i_+(\Sigma)) = 1,
\end{align*}
which are easy to check.
Now these equations give the desired formula.
\end{proof}

\begin{rem}
In the proof we also care about the case where $[\gamma] = 1$, and also in this point Proposition \ref{prop_C2} is a generalization of \cite[Proposition 4.\ 3]{CFK}.
\end{rem}

Considering the homology long exact sequences of the chain complexes with $\Z[\Gamma]$ coefficients instead of Lemma \ref{lem_M} in the proof, we obtain the following lemma.
\begin{lem} \label{lem_A}
Let $\gamma \in \pi^{(m)} \setminus 1.$
Then
\[ H_1^{\rho_m}(M(\gamma, K), i_+(\Sigma); \Z[\Gamma]) \cong \mathcal{A}_K \otimes_{\Z[t, t^{-1}]} \Z[\Gamma_m], \]
where $\mathcal{A_K}$ is the Alexander module of $K$ and we consider the homomorphism $\Z[t, t^{-1}] \to \Z[\Gamma_m]$ defined by $t \mapsto \gamma.$ 
\end{lem}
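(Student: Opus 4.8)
The plan is to reuse verbatim the two short exact sequences of chain complexes appearing in the proof of Proposition \ref{prop_C2}, now tensored with $\Z[\Gamma_m]$ instead of $\Q(\Gamma_m)$, and to read off the answer from the associated long exact homology sequences in place of the multiplicativity of torsion (Lemma \ref{lem_M}), exactly as the preceding remark suggests. Throughout I abbreviate $H_*^{\rho_m}(-;\Z[\Gamma_m])$ by $H_*^{\rho_m}(-)$ and write $T := \partial E_K = \partial Z$.

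First I would record the behaviour of $\rho_m$ on the peripheral elements, which is already extracted in the proof of Proposition \ref{prop_C1}: under the amalgamation $\pi_1 M(\gamma, K) = \pi_1 E_J *_{\lambda_J = \mu_K,\ \mu_J = \lambda_K} \pi_1 E_K$ one has $\rho_m(\mu_K) = \rho_m(\lambda_J) = [\gamma]$ and $\rho_m(\lambda_K) = \rho_m(\mu_J) = 1$. Thus $\rho_m|_{\pi_1 E_K}$ kills the longitude and sends the meridian to $[\gamma]$, so it factors as $\pi_1 E_K \to \langle t \rangle \xrightarrow{t \mapsto [\gamma]} \Gamma_m$ through the abelianization that defines the Alexander module. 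Assuming first $[\gamma] \neq 1 \in \Gamma_m$, a base-change (universal coefficient) spectral sequence along $\Z[t,t^{-1}] \to \Z[\Gamma_m]$ then computes $H_*^{\rho_m}(E_K)$ from the infinite cyclic cover, whose homology is $\Z[t,t^{-1}]/(t-1)$ in degree $0$, $\mathcal{A}_K$ in degree $1$, and $0$ above. I expect this base change to be the main obstacle, precisely the point the field calculation over $\Q(\Gamma_m)$ conceals: $\Z[\Gamma_m]$ is \emph{not} flat over $\Z[t,t^{-1}]$, so one must verify that the only term surviving in total degree $1$ is $\mathcal{A}_K \otimes_{\Z[t,t^{-1}]} \Z[\Gamma_m]$. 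This reduces to the vanishing
\[ \operatorname{Tor}_1^{\Z[t,t^{-1}]}\!\big(\Z[t,t^{-1}]/(t-1),\, \Z[\Gamma_m]\big) = \ker\!\big([\gamma]-1 \colon \Z[\Gamma_m] \to \Z[\Gamma_m]\big) = 0, \]
valid because $\Z[\Gamma_m]$ is a domain and $[\gamma]\neq 1$. This yields $H_1^{\rho_m}(E_K) \cong \mathcal{A}_K \otimes_{\Z[t,t^{-1}]} \Z[\Gamma_m]$ and $H_0^{\rho_m}(E_K) \cong \Z[\Gamma_m]/([\gamma]-1)$.

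Next I would pin down $H_*^{\rho_m}(E_J, i_+(\Sigma))$ by feeding the second short exact sequence (the one for $M(id)$) into its long exact sequence. Since $M(id)$ deformation retracts onto $i_+(\Sigma)$, we have $H_*^{\rho_m'}(M(id), i_+(\Sigma)) = 0$, so the sequence breaks into isomorphisms $H_n^{\rho_m'}(\partial Z) \cong H_n^{\rho_m'}(E_J, i_+(\Sigma)) \oplus H_n^{\rho_m'}(Z)$; as $\rho_m$ and $\rho_m'$ agree on $E_J$, this also computes $H_*^{\rho_m}(E_J, i_+(\Sigma))$. A direct calculation on the solid torus $Z$ (core $\mapsto [\gamma]$) and on $T$ then shows, for $[\gamma]\neq 1$, that $H_0^{\rho_m}(E_J, i_+(\Sigma)) = 0$ and that the inclusion-induced map $(\iota_J)_* \colon H_1^{\rho_m}(T) \to H_1^{\rho_m}(E_J, i_+(\Sigma))$ is an isomorphism, since $H_1^{\rho_m}(Z)=0$ forces the $Z$-component of the Mayer--Vietoris isomorphism to be trivial.

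Finally I would assemble the long exact sequence of the first short exact sequence, with left-hand map inducing $\phi_0,\phi_1$ on $H_0,H_1$. Because $H_0^{\rho_m}(T) \to H_0^{\rho_m}(E_K)$ is an isomorphism of coinvariants, $\phi_0$ is injective, which forces the connecting homomorphism out of $H_1^{\rho_m}(M(\gamma,K), i_+(\Sigma))$ to vanish; hence $H_1^{\rho_m}(M(\gamma,K), i_+(\Sigma)) \cong \coker \phi_1$ with $\phi_1 = ((\iota_J)_*, -(\iota_K)_*)$. As the first component is an isomorphism, a change of coordinates identifies this cokernel with the second factor $H_1^{\rho_m}(E_K) = \mathcal{A}_K \otimes_{\Z[t,t^{-1}]} \Z[\Gamma_m]$, which is the claim. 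For the remaining case $[\gamma]=1$, the restricted coefficient system on $E_K$, $T$, and $Z$ becomes trivial and the target vanishes (as $\Delta_K(1)=\pm 1$); here I would instead check that the $2\times 2$ transition matrix expressing $\phi_1$ on the two generators of $H_1^{\rho_m}(T)$ is unimodular --- its determinant is a unit because the longitudes die in $H_1^{\rho_m}(E_K)$ and $H_1^{\rho_m}(Z)$ while the meridians map to generators --- so $\phi_1$ is an isomorphism and $\coker\phi_1 = 0$, again matching the formula.
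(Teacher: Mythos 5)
Your proposal is correct and is exactly the argument the paper intends: its entire ``proof'' of Lemma \ref{lem_A} is the preceding remark instructing one to rerun the two short exact sequences from the proof of Proposition \ref{prop_C2} with $\Z[\Gamma_m]$ coefficients and use the long exact homology sequences in place of Lemma \ref{lem_M}, which is precisely what you do. Your added details --- the $\operatorname{Tor}_1$ vanishing via $[\gamma]-1$ being a non-zero-divisor in the domain $\Z[\Gamma_m]$, the identification of $H_1^{\rho_m}(E_J,i_+(\Sigma))$ through the Mayer--Vietoris isomorphism for $M(id)$, and the unimodular $2\times 2$ matrix in the case $[\gamma]=1$ --- are all sound fillings of what the paper leaves implicit.
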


Now we are in position to show the difference between $\mathcal{C}_{g, n}^{(m)}$ and $\mathcal{C}_{g, n}^{(m+1)}.$
\begin{thm} \label{thm_C}
(i) $\mathcal{I} \overline{\mathcal{C}}_{0, 2}^{(1)} \neq \mathcal{I} \overline{\mathcal{C}}_{0, 2}^{(0)}.$

(ii) $\mathcal{I} \overline{\mathcal{C}}_{1, 0}^{(1)} \neq \mathcal{I} \overline{\mathcal{C}}_{1, 0}^{(0)}.$

(iii) If $(g, n) \neq (0, 0), (0, 1), (0, 2), (1, 0),$ then $\mathcal{I} \overline{\mathcal{C}}_{g, n}^{(m+1)} \neq \mathcal{I} \overline{\mathcal{C}}_{g, n}^{(m)}$ for all $m.$
\end{thm}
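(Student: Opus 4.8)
The plan is to exhibit, for each relevant $(g,n)$ and each $m$, a single homology cylinder lying in $\mathcal{I}\overline{\mathcal{C}}_{g,n}^{(m)}$ but not in $\mathcal{I}\overline{\mathcal{C}}_{g,n}^{(m+1)}$, built from the knot-surgery construction of Section~\ref{sec_4}. Fix once and for all a knot $K \subset S^3$ with nontrivial Alexander module, for instance the trefoil, for which $\mathcal{A}_K \cong \Z[t,t^{-1}]/(t^2-t+1) \neq 0$. For part (iii), since $(g,n) \neq (0,0),(0,1),(0,2),(1,0)$, the group $\pi$ is either a free group of rank $\geq 2$ (when $n>0$) or a closed surface group of genus $\geq 2$; for such groups the derived series is strictly decreasing, so $\pi^{(m)} \setminus \pi^{(m+1)} \neq \emptyset$ for every $m$. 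Choosing $\gamma \in \pi^{(m)} \setminus \pi^{(m+1)}$, Proposition~\ref{prop_C1} gives $M(\gamma,K) \in \mathcal{I}\overline{\mathcal{C}}_{g,n}^{(m)}$, and it remains only to prove $M(\gamma,K) \notin \mathcal{C}_{g,n}^{(m+1)}$, which a fortiori forces $M(\gamma,K) \notin \mathcal{I}\overline{\mathcal{C}}_{g,n}^{(m+1)}$.

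For this I would work with integral rather than rational coefficients. By Lemma~\ref{lem_N}, membership in $\mathcal{C}_{g,n}^{(m+1)}$ forces $H_*^{\rho_m}(M(\gamma,K), i_+(\Sigma); \Z[\Gamma_m]) = 0$, so it suffices to show this group is nonzero. By Lemma~\ref{lem_A},
\[ H_1^{\rho_m}(M(\gamma,K), i_+(\Sigma); \Z[\Gamma_m]) \cong \mathcal{A}_K \otimes_{\Z[t,t^{-1}]} \Z[\Gamma_m], \]
where $\Z[t,t^{-1}] \to \Z[\Gamma_m]$ sends $t$ to $\gamma$. It is worth stressing that one should \emph{not} attempt to run this through the torsion homomorphism $\tau_m$: by Proposition~\ref{prop_V} the value of $\tau_m$ on an element of $\mathcal{C}_{g,n}^{(m)}$ lands in the image of $\wh(\Gamma_m)$, which is not known to be trivial for $m>0$, so $\tau_m$ cannot by itself certify failure of $\Z[\Gamma_m]$-acyclicity. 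The integral homology module is the correct obstruction.

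The heart of the argument is then the nonvanishing of this tensor product. Since $\gamma \notin \pi^{(m+1)}$, its class $[\gamma]$ is nontrivial in the torsion-free group $\Gamma_m$, hence generates an infinite cyclic subgroup $\langle [\gamma] \rangle \cong \Z$, and $\Z[\langle [\gamma]\rangle] \cong \Z[t,t^{-1}]$ via $[\gamma] \mapsto t$. A group ring is free as a module over the subring determined by any subgroup, so $\Z[\Gamma_m]$ is free over $\Z[t,t^{-1}]$ on a set of coset representatives. Tensoring the nonzero module $\mathcal{A}_K$ with this free module therefore yields a direct sum of copies of $\mathcal{A}_K$, which is nonzero. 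This gives $H_1^{\rho_m} \neq 0$, whence $M(\gamma,K) \notin \mathcal{C}_{g,n}^{(m+1)}$, proving (iii).

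For parts (i) and (ii) the surface group is abelian ($\pi \cong \Z$ for $\Sigma_{0,2}$ and $\pi \cong \Z^2$ for $\Sigma_{1,0}$), so $\pi^{(1)}=1$ and, consistently with Lemma~\ref{lem_TC}, the only strict inclusion to detect is from level $0$ to level $1$. I would take $m=0$ and $\gamma \in \pi \setminus 1$, so $[\gamma]$ has infinite order in $\Gamma_0 = H_1(\Sigma)$; the identical computation gives $H_1^{\rho_0}(M(\gamma,K), i_+(\Sigma); \Z[\Gamma_0])$ a nonzero direct sum of copies of $\mathcal{A}_K$, so $M(\gamma,K) \notin \mathcal{C}_{g,n}^{(1)}$. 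Alternatively, since $\Gamma_0$ is free abelian one may invoke the clean criterion of Proposition~\ref{prop_V} together with Proposition~\ref{prop_C2}, noting $\tau_0(M(\gamma,K)) = [\Delta_K(\gamma)] \neq 1$ because $\Delta_K$ is not a monomial. Either route yields $\mathcal{I}\overline{\mathcal{C}}_{g,n}^{(1)} \neq \mathcal{I}\overline{\mathcal{C}}_{g,n}^{(0)}$. The main obstacle is precisely the nonvanishing of $\mathcal{A}_K \otimes_{\Z[t,t^{-1}]} \Z[\Gamma_m]$: everything hinges on recognizing that the relevant invariant is the integral homology module rather than the rational torsion, and on the freeness of $\Z[\Gamma_m]$ over $\Z[\langle[\gamma]\rangle]$, which reduces the claim to the trivial observation $\mathcal{A}_K \neq 0$; a secondary point needing care is the strict descent of the derived series in the cases covered by (iii).
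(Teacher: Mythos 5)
Your proposal is correct and follows essentially the same route as the paper: the paper likewise picks $\gamma \in \pi^{(m)} \setminus \pi^{(m+1)}$ and a knot $K$ with $\mathcal{A}_K \neq 0$, uses Proposition \ref{prop_C1} to place $M(\gamma, K)$ in $\mathcal{I}\overline{\mathcal{C}}_{g, n}^{(m)}$, and then combines Lemma \ref{lem_A} with the vanishing of $H_1^{\rho_m}(M, i_+(\Sigma); \Z[\Gamma_m])$ for elements of $\mathcal{C}_{g, n}^{(m+1)}$ (Lemma \ref{lem_N}, equivalently the proof of Proposition \ref{prop_V}) to exclude $M(\gamma, K)$ from level $m+1$. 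The only difference is that you spell out details the paper leaves implicit, namely the strict descent of the derived series and the freeness of $\Z[\Gamma_m]$ over $\Z[\langle [\gamma] \rangle]$, which gives $\mathcal{A}_K \otimes_{\Z[t, t^{-1}]} \Z[\Gamma_m] \neq 0$.
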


\begin{proof}
Suppose that $\pi^{(m+1)} \neq \pi^{(m)}.$
Let $\gamma \in \pi^{(m)} \setminus \pi^{(m+1)}$ and let $K \subset S^3$ be a tame knot with nontrivial $\mathcal{A}_K.$
By Proposition \ref{prop_C1} we see $M(\gamma, K) \in \mathcal{I} \overline{\mathcal{C}}_{g, n}^{(m)}.$
By Lemma \ref{lem_A} we have $H_1^{\rho_m}(M(\gamma, K), i_+(\Sigma); \Z[\Gamma_m]) \neq 0.$
On the other hand, $H_1^{\rho_m}(M, i_+(\Sigma); \Z[\Gamma_m]) = 0$ for every $(M, i_{\pm}) \in \mathcal{C}_{g, n}^{(m+1)}$
(e.g., see the proof of Proposition \ref{prop_V}).
Therefore 
 $M(\gamma, K) \notin \mathcal{I} \overline{\mathcal{C}}_{g, n}^{(m+1)},$
which gives the theorem.
\end{proof}

%%%%%%% Section 5 %%%%%%%%%%%%%%%%%%%%%%%%%%%%%%%%%%%%%%%%%%%%%%%%%%%
\section{Reduction of the torsion group} \label{sec_5}
A \textit{bi-order} $\leq$ of a group $\Gamma$ is a total order of $\Gamma$ satisfying that if $x \leq y,$ then $a x b \leq a y b$ for all $a, b, x, y \in \Gamma.$
A group $\Gamma$ is called \textit{bi-orderable} if $\Gamma$ admits a bi-order.
It is well-known that an abelian group is bi-orderable if and only if it is torsion-free.
The following lemma is an immediate consequence of  \cite[Corollary 2.\ 4.\ 2, Corollary 2.\ 4.\ 3]{MR}.

\begin{lem}
For a free group $F$, $F / F^{(m)}$ is bi-orderable for all $m.$
\end{lem}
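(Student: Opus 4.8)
The plan is to induct on $m$, constructing the bi-order one derived layer at a time via the short exact sequence
$$1 \longrightarrow F^{(m-1)}/F^{(m)} \longrightarrow F/F^{(m)} \longrightarrow F/F^{(m-1)} \longrightarrow 1.$$
The case $m = 1$ is immediate, since $F/F^{(1)}$ is the abelianization of a free group and hence free abelian, so torsion-free and therefore bi-orderable. For the inductive step I would write $Q := F/F^{(m-1)}$, which is bi-orderable by the inductive hypothesis, and $N := F^{(m-1)}/F^{(m)}$. By the Nielsen--Schreier theorem $F^{(m-1)}$ is itself free, so $N$ is its abelianization, hence free abelian; in particular $N$ is torsion-free and admits a bi-order on its own.

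The question is then how to recombine a bi-order on $Q$ and one on $N$ into a bi-order on the extension. This is exactly what the cited results of \cite{MR} provide: an extension of a bi-orderable group $Q$ by a bi-orderable normal subgroup $N$ is again bi-orderable as soon as $N$ carries a bi-order that is invariant under the conjugation action of $Q.$ Thus the real content, and the step I expect to be the main obstacle, is the production of such a $Q$-invariant bi-order on the free abelian group $N = F^{(m-1)}/F^{(m)}.$ This cannot follow from torsion-freeness alone: a general action of $Q$ on a torsion-free abelian group preserves no order (the order-reversing $\Z$-action underlying the Klein bottle group is the standard obstruction), so the argument must exploit the special structure of the derived layers of a free group.

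Here I would invoke the Magnus embedding. The conjugation action turns $N$ into a module over $\Z[Q],$ and Magnus's theorem embeds $N,$ $Q$-equivariantly, as a $\Z[Q]$-submodule of a free module $\Z[Q]^{r}$; equivalently it embeds $F/F^{(m)}$ into the wreath product $\Z^{r} \wr Q$ with $N$ landing in the base group. Fixing a bi-order on $Q,$ I would order $\Z[Q]^{r}$ by leading term---compare two elements at the $\leq$-largest element of $Q$ occurring with distinct coefficients, breaking ties by a fixed total order on the $r$ coordinates. Since the $Q$-action merely translates supports, this order is manifestly $Q$-invariant, and it restricts to a $Q$-invariant bi-order on $N.$ Feeding this into the extension criterion of \cite{MR} closes the induction; packaged all at once, the conclusion is simply that $F/F^{(m)}$ embeds in the iterated wreath product $\Z^{r} \wr \cdots \wr \Z^{r},$ which is bi-orderable by repeated application of the wreath-product and subgroup results of \cite{MR}. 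The only points left to check, both routine, are that the leading-term order is genuinely a two-sided order and that the Magnus embedding is equivariant, so that invariance descends to $N.$
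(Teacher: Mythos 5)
Your proposal is correct, but it is not a reconstruction of an argument the paper actually spells out: the paper's entire ``proof'' is the sentence preceding the lemma, namely that the statement is an immediate consequence of Corollaries 2.4.2 and 2.4.3 of Mura--Rhemtulla \cite{MR}. So you have supplied the mathematics that the paper outsources. Your route---induction along the derived series, the extension criterion (a bi-orderable quotient $Q$ plus a bi-order on the normal subgroup $N$ invariant under the conjugation action, which here factors through $Q$ because $N$ is abelian, yields a bi-order on the extension), and the Magnus embedding $F/F^{(m)} \hookrightarrow \Z^r \wr (F/F^{(m-1)})$ with the leading-term order on the base $\Z[Q]^r$---is a valid and essentially constructive proof; the two points you defer are indeed routine (the Magnus embedding is a homomorphism over $Q$, so conjugation on $N$ becomes translation of supports, and finiteness of supports makes the leading-term comparison well defined and compatible with addition and translation). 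The route underlying the cited corollaries of \cite{MR} is different in its decomposition: rather than the derived series, one uses that free solvable (more generally free polynilpotent) groups admit a descending \emph{central} series with torsion-free abelian factors intersecting trivially (equivalently, they are residually torsion-free nilpotent, a theorem of Gruenberg), and for central factors the required invariance of the order is automatic, so no analogue of your Magnus-embedding step is needed. What your argument buys is self-containedness and an explicit positive cone on $F/F^{(m)}$; what the citation buys is brevity and a criterion (residual torsion-free nilpotence implies bi-orderability) that applies well beyond free solvable groups---which is exactly the gap the paper's subsequent remark points to for the closed-surface case $n = 0$, where $\Gamma_m$ is no longer a free solvable group and neither your argument nor the citation applies.
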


\begin{rem}
It is well-known that every finitely generated torsion-free nilpotent group is residually $p$ for any prime $p.$
Rhemtulla~\cite{R} showed that a group which is residually $p$ for infinitely many $p$ is bi-orderable.
To author's knowledge there seems to be no appropriate reference on whether $\Gamma_m$ is residually nilpotent, residually p for infinitely many $p$ or bi-orderable in the case where $m > 0$ and $n = 0.$ 
\end{rem}

In the following we assume $n > 0$ and fix a bi-order of $\Gamma_{m-1}.$
Let $A_m := \pi^{(m)} / \pi^{(m+1)}$ and let $C_m$ be the subgroup of $\Q(A_m)^{\times}$ generated by
\[ \left\{ \pm a \cdot \frac{\gamma p \gamma^{-1}}{p} \in \Q(A_m)^{\times} ~;~ a \in A_m, \gamma \in \Gamma_m, p \in \Q(A_m)^{\times} \right\}. \]
We define a map $d \colon \Z[\Gamma_m] \setminus 0 \to \Q(A_m)^{\times} / C_m$ by
\[ d \left( \sum_{\delta \in \Gamma_{m-1}} \sum_{\gamma \in \Gamma_m, [\gamma] = \delta} a_{\gamma} \gamma \right) = \left[ \left( \sum_{\gamma \in \Gamma_m, [\gamma] = \delta_{max}} a_{\gamma} \gamma \right) \gamma_0^{-1} \right],  \]
where $\delta_{\max} \in \Gamma_{m-1}$ is the maximum with respect to the fixed bi-order such that for some $\gamma \in \Gamma_m$ with $[\gamma] = \delta_{max},$ $a_{\gamma} \neq 0,$ and $\gamma_0 \in \Gamma_m$ is an element with $[\gamma_0] = \delta_{max}.$
The proof of the following lemma is straightforward.

\begin{lem}
The map $d \colon \Z[\Gamma_m] \setminus 0 \to \Q(A_m)^{\times} / C_m$ does not depend on the choice of $\gamma_0$ and is a monoid homomorphism.
\end{lem}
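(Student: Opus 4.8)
The plan is to exploit the short exact sequence $1 \to A_m \to \Gamma_m \to \Gamma_{m-1} \to 1$, in which $A_m$ is the abelian, torsion-free, normal kernel of the projection $\Gamma_m \to \Gamma_{m-1}$, $\gamma \mapsto [\gamma]$. Since $A_m$ is normal, conjugation gives an action of $\Gamma_m$ on $\Q(A_m)$, and the expression $\gamma p \gamma^{-1}$ in the definition of $C_m$ is exactly the image of $p$ under this action; note also that $\Z[\Gamma_m]$ is a domain because $\Gamma_m$ is PTFA, so the source is genuinely a multiplicative monoid. For $x \in \Z[\Gamma_m] \setminus 0$ I would write $x = \sum_\delta x_\delta$ with $x_\delta := \sum_{[\gamma] = \delta} a_\gamma \gamma$ its $\delta$-component, and let $x_{\delta_{\max}}$ denote the leading component, so that by definition $d(x) = [x_{\delta_{\max}} \gamma_0^{-1}]$ for any $\gamma_0$ with $[\gamma_0] = \delta_{\max}$. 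The guiding idea is that $C_m$ is engineered with two families of generators: the $\pm a$ with $a \in A_m$, absorbing the ambiguity in choosing $\gamma_0$, and the $\gamma p \gamma^{-1}/p$, absorbing the conjugation discrepancy produced by non-commutativity.

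First I would establish independence of $\gamma_0$. Two admissible choices $\gamma_0, \gamma_0'$ satisfy $\gamma_0 (\gamma_0')^{-1} \in A_m$, and right-multiplying $x_{\delta_{\max}}$ by $(\gamma_0')^{-1}$ rather than $\gamma_0^{-1}$ alters the representative in $\Z[A_m]$ by the factor $\gamma_0 (\gamma_0')^{-1} \in A_m$. Since every $\pm a$ with $a \in A_m$ is a generator of $C_m$ (take $\gamma = 1$ in the defining set), this factor dies in $\Q(A_m)^\times / C_m$, so $d$ is well-defined.

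The main content is multiplicativity. Here I would use the fixed bi-order on $\Gamma_{m-1}$ to show that leading components multiply: for $x, y \in \Z[\Gamma_m]\setminus 0$ the $\zeta$-component of $xy$ is $\sum_{\delta\epsilon = \zeta} x_\delta y_\epsilon$, and in a bi-ordered group the pair $(\delta, \epsilon)$ with $\delta \le \delta_{\max}(x)$ and $\epsilon \le \epsilon_{\max}(y)$ maximizing $\delta\epsilon$ is unique, namely $(\delta_{\max}(x), \epsilon_{\max}(y))$; consequently the leading component of $xy$ equals $x_{\delta_{\max}} y_{\epsilon_{\max}}$, which is nonzero because $\Z[\Gamma_m]$ is a domain. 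Choosing representatives $\gamma_0, \gamma_0'$ over $\delta_{\max}(x), \epsilon_{\max}(y)$ and using $\gamma_0\gamma_0'$ to compute $d(xy)$ (legitimate by the previous step), I set $u := x_{\delta_{\max}}\gamma_0^{-1}$ and $v := y_{\epsilon_{\max}}(\gamma_0')^{-1}$, so that $d(x) = [u]$ and $d(y) = [v]$. A direct manipulation in $\Z[\Gamma_m]$ gives $x_{\delta_{\max}} y_{\epsilon_{\max}}(\gamma_0\gamma_0')^{-1} = u\,\gamma_0 v \gamma_0^{-1} = u\cdot(\gamma_0 v \gamma_0^{-1})$, the last product taken in the commutative ring $\Z[A_m]$. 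Hence
\[ d(xy)\cdot\bigl(d(x)d(y)\bigr)^{-1} = \left[\frac{u\,(\gamma_0 v \gamma_0^{-1})}{u v}\right] = \left[\frac{\gamma_0 v \gamma_0^{-1}}{v}\right], \]
which is trivial since $\gamma_0 v \gamma_0^{-1}/v$ is precisely a generator of $C_m$ (with $a = 1$, $\gamma = \gamma_0$, $p = v$). Together with the trivial check $d(1) = 1$, this shows $d$ is a monoid homomorphism.

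I expect the only genuine obstacle to be the leading-term step: verifying that in the bi-ordered group $\Gamma_{m-1}$ the maximal product $\delta\epsilon$ over the supports of $x$ and $y$ is attained by a unique pair, so that no cancellation occurs in the top component of $xy$. Everything else is formal once one observes that the two families of generators of $C_m$ were chosen exactly to kill the $A_m$-ambiguity and the conjugation discrepancy, respectively.
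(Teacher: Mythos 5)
Your proof is correct, and it is essentially the paper's (omitted) argument: the paper dismisses this lemma's proof as ``straightforward,'' and what it has in mind is exactly your three observations --- the $\pm a$ generators of $C_m$ absorb the ambiguity in $\gamma_0$, bi-invariance of the order on $\Gamma_{m-1}$ forces the leading components to multiply without cancellation (using that $\Z[\Gamma_m]$ is a domain since $\Gamma_m$ is PTFA), and the conjugation generators $\gamma p \gamma^{-1}/p$ absorb the discrepancy $u\,(\gamma_0 v \gamma_0^{-1})$ versus $uv$. Nothing is missing; your writeup simply makes the intended argument explicit.
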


By the lemma we have a group homomorphism $\Q(\Gamma_m)^{\times}_{ab} / \pm \Gamma_m \to \Q(A_m)^{\times} / C_m$ which maps $f \cdot g^{-1}$ to $d(f) \cdot d(g)^{-1}$ for $f, g \in \Z[\Gamma_m] \setminus 0.$
By abuse of notation, we use the same letter $d$ for the homomorphism.
Since there is a natural section $\Q(A_m)^{\times} / C_m \to \Q(\Gamma_m)^{\times}_{ab} / \pm \Gamma_m$ of $d,$ $\Q(A_m)^{\times} / C_m$ can be seen as a direct summand of $\Q(\Gamma_m)^{\times}_{ab} / \pm \Gamma_m$.

For irreducible $p, q \in \Z[A_m] \setminus 0,$ we write $p \sim q$ if there exist $a \in A_m$ and $\gamma \in \Gamma_m$ such that $p = \pm a \cdot \gamma q \gamma^{-1}.$
Since $\Z[A_m]$ is a unique factorization domain, every $x \in \Q(A_m)^{\times} / C_m$ can be written as $x = \prod_{[p]} [p]^{e_{[p]}},$ where $e_{[p]}$ is a uniquely determined integer.
Let $e \colon \Q(A_m)^{\times} / C_m \to \oplus_{[p]} \Z$ be the isomorphism given by $x \mapsto \sum_{[p]} e_{[p]}.$

Recall that for every monoid $S$, there exists a monoid homomorphism $g \colon S \to \mathcal{U}(S)$ to a group $\mathcal{U}(S)$ satisfying the following:
For every monoid homomorphism $f \colon S \to G$ to a group $G$, there exists a unique group homomorphism $f' \colon \mathcal{U}(S) \to G$ such that $f = f' \circ g$.
By the universality $\mathcal{U}(S)$ is uniquely determined up to isomorphisms. 
The following theorem is an analogous result of Goda and Sakasai in \cite{GS2}.

\begin{thm} \label{thm_A}
If $n > 0$ and $(g, n) \neq (0, 1), (0, 2),$ then the abelianization of $\mathcal{U}(\mathcal{I} \overline{\mathcal{C}}_{g, n}^{(m)})$ has infinite rank for all $m.$
\end{thm}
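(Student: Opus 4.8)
The plan is to manufacture an explicit monoid homomorphism from $\mathcal{I}\overline{\mathcal{C}}_{g,n}^{(m)}$ onto an infinite-rank free abelian group, using the families $M(\gamma,K)$ of Section~\ref{sec_4} as the supply of independent elements. Concretely, I would consider the composite
\[
\Psi := e \circ d \circ \tau_m \colon \mathcal{I}\overline{\mathcal{C}}_{g,n}^{(m)} \longrightarrow \bigoplus_{[p]} \Z ,
\]
which is a monoid homomorphism because $\tau_m$ is one on $\mathcal{IC}_{g,n}^{(m)} \supseteq \mathcal{I}\overline{\mathcal{C}}_{g,n}^{(m)}$ by Corollary~\ref{cor_H2}, while $d$ and $e$ are group homomorphisms. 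Since the target is an abelian group, the universal property of the group completion yields a group homomorphism $\mathcal{U}(\mathcal{I}\overline{\mathcal{C}}_{g,n}^{(m)}) \to \bigoplus_{[p]}\Z$, which then factors through the abelianization. Thus it suffices to produce elements of $\mathcal{I}\overline{\mathcal{C}}_{g,n}^{(m)}$ whose $\Psi$-images generate a subgroup of infinite rank.

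First I would fix $\gamma \in \pi^{(m)}$ whose class $[\gamma]$ is a \emph{primitive} element of the free abelian group $A_m = \pi^{(m)}/\pi^{(m+1)}$; this is possible because the hypotheses $n>0$ and $(g,n)\neq(0,1),(0,2)$ force $\pi$ to be free of rank $\geq 2$, so $A_m$ is free abelian and the surjection $\pi^{(m)}\to A_m$ realizes any primitive class. For such a $\gamma$, Proposition~\ref{prop_C1} gives $M(\gamma,K)\in\mathcal{I}\overline{\mathcal{C}}_{g,n}^{(m)}$ for every tame knot $K$, and Proposition~\ref{prop_C2} gives $\tau_m(M(\gamma,K))=[\Delta_K(\gamma)]$. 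Since every monomial of $\Delta_K(\gamma)$ lies in $A_m=\ker(\Gamma_m\to\Gamma_{m-1})$, the definition of $d$ degenerates (the relevant $\delta_{\max}$ is the identity of $\Gamma_{m-1}$, and one may take $\gamma_0=1$), so $d(\tau_m(M(\gamma,K)))=[\Delta_K(\gamma)]\in\Q(A_m)^{\times}/C_m$ and $\Psi(M(\gamma,K))=e([\Delta_K(\gamma)])$ records the multiplicities of the $\sim$-classes of the irreducible factors of $\Delta_K(\gamma)$ in the unique factorization domain $\Z[A_m]$.

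Next I would choose an infinite family of tame knots $K_1,K_2,\dots$ whose Alexander polynomials $\Delta_{K_j}\in\Z[t^{\pm1}]$ are pairwise non-associate, irreducible, and symmetric; for instance the realizable symmetric polynomials $\Delta_j(t)=j(t+t^{-1})-(2j+1)$ with $4j+1$ not a perfect square, each satisfying $\Delta_j(1)=-1$ and irreducible over $\Q$. Because $[\gamma]$ is primitive, extending it to a basis of $A_m$ shows each $\Delta_{K_j}(\gamma)$ is still irreducible in $\Z[A_m]$, so $\Psi(M(\gamma,K_j))$ is the single generator of the summand indexed by the $\sim$-class of $\Delta_{K_j}(\gamma)$. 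It then remains to verify that distinct $j$ yield distinct $\sim$-classes, i.e. $\Delta_{K_j}(\gamma)\not\sim\Delta_{K_k}(\gamma)$ for $j\neq k$; granting this, the elements $\Psi(M(\gamma,K_j))$ are distinct standard generators, hence $\Z$-independent, which proves the claim.

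I expect this last distinctness to be the main obstacle, since the relation $\sim$ builds in both translation by $A_m$ and conjugation by $\Gamma_m$. Unwinding it, $\Delta_{K_j}(\gamma)\sim\Delta_{K_k}(\gamma)$ means $\Delta_{K_j}([\gamma])=\pm a\cdot \Delta_{K_k}(\delta[\gamma]\delta^{-1})$ for some $a\in A_m$ and $\delta\in\Gamma_m$, so the problem reduces to controlling the conjugation orbit of the primitive element $[\gamma]$. Here I would use that conjugation by $\delta$ is an automorphism of $A_m$, hence carries $[\gamma]$ to another primitive element $u'$: if $u'\neq\pm[\gamma]$ then $\langle u'\rangle\cap\langle[\gamma]\rangle=0$, so the two relevant cosets meet in at most one point while the supports of $\Delta_{K_j}([\gamma])$ and $\pm a\,\Delta_{K_k}(u')$ each have at least two points, making equality impossible; and if $u'=\pm[\gamma]$ then symmetry of the Alexander polynomials reduces the relation to $\Delta_{K_j}\doteq\Delta_{K_k}$ in $\Z[t^{\pm1}]$, contradicting non-associativity. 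The delicate point is thus to package the primitivity of $[\gamma]$ with the support/coset geometry in $A_m$ so as to neutralize simultaneously the $A_m$-translation and the $\Gamma_m$-conjugation inherent in $\sim$.
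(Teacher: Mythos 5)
Your proposal is correct and follows essentially the same route as the paper's proof: the composite $e \circ d \circ \tau_m$, the cylinders $M(\gamma, K)$ with $\gamma \in \pi^{(m)} \setminus \pi^{(m+1)}$, Propositions \ref{prop_C1} and \ref{prop_C2}, and the realization of symmetric polynomials with $p(1) = \pm 1$ as Alexander polynomials of knots. The only difference is one of detail: where the paper simply asserts that the image of $e \circ d \circ \tau_m$ contains a submonoid isomorphic to $\Z_{\geq 0}^{\infty}$, you verify this independence explicitly (primitivity of $[\gamma]$ in $A_m$, persistence of irreducibility of $\Delta_{K_j}(\gamma)$ in $\Z[A_m]$, and the coset/support argument showing the $A_m$-translation and $\Gamma_m$-conjugation in $\sim$ cannot identify distinct $[\Delta_{K_j}(\gamma)]$), which is precisely the step the paper leaves implicit.
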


\begin{proof}
Let $\gamma \in \pi^{(m)} \setminus \pi^{(m+1)}$ and let $K \subset S^3$ be a tame knot.
By Proposition \ref{prop_C1} we see $M(\gamma, K) \in \mathcal{I} \overline{\mathcal{C}}_{g, n}^{(m)}.$
By Propositions \ref{prop_P}, \ref{prop_C2} we have
\[ d \circ \tau_m(M(\gamma, K)) = [\Delta_K(\gamma)]. \]
Since it is well-known that for any $p \in \Z[t, t^{-1}]$ with $p(t^{-1}) = p(t)$ and $p(1) = 1,$ there exists a knot $K \subset S^3$ such that $\Delta_K = p,$ the image of $e \circ d \circ \tau_m \colon \mathcal{I} \overline{\mathcal{C}}_{g, n}^{(m)} \to \oplus_{[p]} \Z$ contains a submonoid isomorphic to $\Z_{\geq 0}^{\infty}.$
Therefore the image of the induced map $\mathcal{U}(\mathcal{I} \overline{\mathcal{C}}_{g, n}^{(m)}) \to \oplus_{[p]} \Z$ is a free abelian group of infinite rank, which proves the theorem.
\end{proof}

\begin{rem}
If we suppose that $\wh(\Gamma_m) = 1,$ we could conclude by Proposition \ref{prop_V} that under the same assumption $\mathcal{U}(\mathcal{I} \overline{\mathcal{C}}_{g, n}^{(m)}) / \mathcal{U}(\mathcal{I} \overline{\mathcal{C}}_{g, n}^{(m+1)})$ should have infinite rank for all $m.$
\end{rem}

\begin{cor} \label{cor_A}
If $n > 0$ and $(g, n) \neq (0, 1), (0, 2),$ then $\mathcal{I} \overline{\mathcal{C}}_{g, n}^{(m)}$ is not finitely generated for all $m.$
\end{cor}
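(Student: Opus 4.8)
The plan is to deduce this immediately from Theorem \ref{thm_A} together with the standard fact that finite generation of a monoid is inherited by its group completion. The logical skeleton is a contradiction argument: if $\mathcal{I} \overline{\mathcal{C}}_{g, n}^{(m)}$ were finitely generated as a monoid, then $\mathcal{U}(\mathcal{I} \overline{\mathcal{C}}_{g, n}^{(m)})$ would be a finitely generated group, hence its abelianization would be a finitely generated abelian group of finite rank, contradicting Theorem \ref{thm_A}.

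First I would set $S := \mathcal{I} \overline{\mathcal{C}}_{g, n}^{(m)}$ and assume, for contradiction, that $S$ is generated as a monoid by finitely many elements $s_1, \dots, s_k$. Writing $g \colon S \to \mathcal{U}(S)$ for the universal monoid homomorphism, the next step is to check that $g(s_1), \dots, g(s_k)$ generate $\mathcal{U}(S)$ as a group. For this I would use the universal property directly: let $G'$ be the subgroup of $\mathcal{U}(S)$ generated by the image of $g$. Since $g$ factors as a monoid homomorphism $S \to G'$ into the group $G'$, the universal property yields a group homomorphism $\mathcal{U}(S) \to G'$; composing with the inclusion $G' \hookrightarrow \mathcal{U}(S)$ gives an endomorphism of $\mathcal{U}(S)$ that agrees with the identity on the image of $g$, and so equals the identity by the uniqueness clause of the universal property. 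Hence $g(S)$ generates $\mathcal{U}(S)$, and since every element of $S$ is a word in the $s_i$, every $g(s)$ is a word in the $g(s_i)$; therefore $g(s_1), \dots, g(s_k)$ generate $\mathcal{U}(S)$.

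Consequently $\mathcal{U}(S)$ is a finitely generated group, and its abelianization is a finitely generated abelian group, which by the structure theorem has finite rank. This directly contradicts Theorem \ref{thm_A}, which asserts that the abelianization of $\mathcal{U}(\mathcal{I} \overline{\mathcal{C}}_{g, n}^{(m)})$ has infinite rank under exactly the stated hypotheses $n > 0$ and $(g, n) \neq (0, 1), (0, 2)$.

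I do not expect a genuine obstacle here, since the corollary is essentially a formal packaging of Theorem \ref{thm_A}. The only point requiring care — and the one I would make explicit rather than assert — is the verification that the group completion of a finitely generated monoid is finitely generated, which is precisely the universal-property argument above; all the analytic content (producing infinitely many independent elements via the knots realizing prescribed Alexander polynomials) has already been absorbed into Theorem \ref{thm_A}.
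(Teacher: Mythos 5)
Your proposal is correct and is exactly the argument the paper intends: Corollary \ref{cor_A} is stated as an immediate consequence of Theorem \ref{thm_A}, with the (omitted) formal step being precisely your observation that a finitely generated monoid has finitely generated group completion, hence finitely generated abelianization of finite rank. Your explicit verification of that step via the universal property of $\mathcal{U}(S)$ is sound and fills in the only detail the paper leaves implicit.
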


We conclude with an observation concerning the abelianization of $\mathcal{IH}_{g, n}^{(m)}.$
We set
\[ N_m' := \{ \pm a \cdot q \cdot \overline{q} \in \Q(A_m)^{\times} ~;~ a \in A_m, q \in \Q(A_m)^{\times} \}. \]
There is a natural map $\iota \colon \Q(A_m)^{\times} / (C_m \cdot N_m') \to \Q(\Gamma_m)_{ab}^{\times} / N_m.$
From the unique factorization property of $\Z[A_m]$ we have the isomorphism $e' \colon \Q(A_m)^{\times} / (C_m \cdot N_m') \to (\oplus_{[p] = [\bar{p}]} \Z / 2) \oplus (\oplus_{[p] \neq [\bar{p}]} \Z)$ induced by $e \colon \Q(A_m)^{\times} / C_m \to \oplus_{[p]} \Z.$
If $n > 0$ and $(g, n) \neq (0, 1), (0, 2),$ then from the argument in the proof of Theorem \ref{thm_A} the image of $\tau_m \colon \mathcal{IH}_{g, n}^{(m)} \to \Q(\Gamma_m)_{ab}^{\times} / N_m$ contains the image of a direct summand isomorphic to $(\Z / 2)^{\infty}$ by $\iota.$
Thus to investigate $\ker \iota$ is essential to detect size of the image of $\tau_m.$ \\

%%%%%%% Acknowledgement %%%%%%%%%%%%%%%%%%%%%%%%%%%%%%%%%%%%%%%%%%%%%
\noindent
\textbf{Acknowledgment.}
The author wishes to express his gratitude to Toshitake Kohno and Tomotada Ohtsuki for their encouragement and helpful suggestions.
A part of this project was advanced while the author visited the Mathematical Institute of the University of Cologne and he is especially grateful to Stefan Friedl.
The author would also like to thank Hiroshi Goda, Tetsuya Ito, Wolfgang L\"{u}ck, Takayuki Morifuji, Andrei Pajitnov, Mark Powell, Jean Raimbault and Takuya Sakasai for fruitful discussions.
Finally, the author would like to thank the anonymous referee for helpful comments, in particular the one about Remark \ref{rem_R}.
This research was supported by JSPS Research Fellowships for Young Scientists.

%%%%%%% Reference %%%%%%%%%%%%%%%%%%%%%%%%%%%%%%%%%%%%%%%%%%%%%%%%%%%

\end{document}